\title{Six Functor Formalisms and Fibered Multiderivators}
\date{Februar 27, 2017}
\author{Fritz H\"ormann\\ Mathematisches Institut, Albert-Ludwigs-Universit\"at Freiburg}
\newtheorem{SATZ}{Theorem}[section]
\newtheorem{HAUPTSATZ}[SATZ]{Main theorem}
\newtheorem{LEMMA}[SATZ]{Lemma}
\newtheorem{DEF}[SATZ]{Definition}
\newtheorem{PROP}[SATZ]{Proposition}
\newtheorem{BEISPIEL}[SATZ]{Example}
\newtheorem{KOR}[SATZ]{Corollary}
\newtheorem{BEM}[SATZ]{Remark}
\newtheoremstyle{bare}        
  {}            
  {}            
  {\normalfont}                 
  {}                            
  {\bfseries}                   
  {}                            
  {.0em}                           
  {\thmnumber{#2}#1. \thmnote{\normalfont\textsc{(#3)}} } 
\theoremstyle{bare}
\newtheorem{PAR}[SATZ]{}
\newcommand{\Z}{ \mathbb{Z} }
\newcommand{\DD}{ \mathbb{D} }
\newcommand{\SSS}{ \mathbb{S} }
\newcommand{\iso}{\stackrel{\sim}{\longrightarrow}}
\DeclareMathOperator{\cor}{cor}
\DeclareMathOperator{\Ob}{Ob}
\DeclareMathOperator{\Span}{Cor}
\DeclareMathOperator{\tw}{tw}
\DeclareMathOperator{\op}{op}
\DeclareMathOperator{\id}{id}
\DeclareMathOperator{\Hom}{Hom}
\DeclareMathOperator{\Fun}{Fun}
\DeclareMathOperator{\pr}{pr}
\DeclareMathOperator{\Mor}{Mor}
\DeclareMathOperator{\Dia}{Dia}
\begin{document}

\maketitle

{\footnotesize  {\em 2010 Mathematics Subject Classification:} 55U35, 14F05, 18D10, 18D30, 18E30, 18G99  }

{\footnotesize  {\em Keywords:} derivators, fibered derivators, multiderivators, (op)fibered 2-multicategories, six-functor-formalisms, \\Grothendieck contexts, Wirthm\"uller contexts }

\section*{Abstract}

We develop the theory of (op)fibrations of 2-multicategories and
use it to define abstract six-functor-formalisms. 
We also give axioms 
for Wirthm\"uller and Grothendieck formalisms (where either $f^!=f^*$ or $f_!=f_*$) or intermediate formalisms where we have e.g.\@ a natural morphism $f_! \rightarrow f_*$. 
Finally, it is shown that a fibered multiderivator (in particular, a closed monoidal derivator) can be interpreted as a six-functor-formalism on diagrams (small categories). 
This gives, among other things, a considerable simplification of the axioms and of the proofs of basic properties, and clarifies
the relation between the internal and external monoidal products in a (closed) monoidal derivator. 
Our main motivation is the development of a theory of {\em derivator versions} of six-functor-formalisms. 

\tableofcontents

\section*{Introduction}

\subsection*{Six-functor-formalisms}

Let $\mathcal{S}$ be a category, for instance a suitable category of schemes, topological spaces, analytic manifolds, etc.
A Grothendieck six functor formalism on $\mathcal{S}$ consists of a collection of (derived) categories $\mathcal{D}_S$, one for each ``base space'' $S$ in $\mathcal{S}$
with the following six types of operations:
\vspace{0.5cm}
\[
\begin{array}{lcrp{1cm}l}
f^* & &f_* &  & \text{\em for each $f$ in $\Mor(\mathcal{S})$}  \\
\\
f_! & &f^! & & \text{\em for each $f$ in $\Mor(\mathcal{S})$} \\
\\
\otimes & & \mathcal{HOM} & & \text{\em in each fiber $\mathcal{D}_S$}
\end{array}
\]
\vspace{0.2cm}

The fiber $\mathcal{D}_S$ is, in general, a {\em derived} category of ``sheaves'' over $S$, for example coherent sheaves, $l$-adic sheaves, abelian sheaves, $D$-modules, motives, etc. 
The functors on the left hand side are left adjoints of the functors on the right hand side. 
The functor $f_!$ and its right adjoint $f^!$ are called ``push-forward with proper support'', and ``exceptional pull-back'', respectively.
The six functors come along with the following isomorphisms between them and it is not easy to make their axioms really precise. 

\begin{PAR}\label{INTRO6FUISO}
\begin{center}
\begin{tabular}{rlll}
& left adjoints & right adjoints \\
\hline
$(*,*)$ & $(fg)^* \iso g^* f^*$ & $(fg)_* \iso f_* g_*$ &\\
$(!,!)$ & $(fg)_! \iso f_! g_!$ & $(fg)^! \iso g^! f^!$ &\\ 
$(!,*)$ 
& $g^* f_! \iso F_! G^*$ & $G_* F^! \iso f^! g_*$ & \\
$(\otimes,*)$ & $f^*(- \otimes -) \iso f^*- \otimes f^* -$ & $f_* \mathcal{HOM}(f^*-, -) \iso \mathcal{HOM}(-, f_*-)$  & \\
$(\otimes,!)$ & $f_!(- \otimes f^* -) \iso  (f_! -) \otimes -$ & $f_* \mathcal{HOM}(-, f^!-) \iso \mathcal{HOM}(f_! -, -)$ & \\ 
& & $f^!\mathcal{HOM}(-, -) \iso \mathcal{HOM}(f^* -, f^!-)$ & \\
$(\otimes, \otimes)$ &  $(- \otimes -) \otimes - \iso - \otimes (- \otimes -)$ &  $\mathcal{HOM}(- \otimes -, -) \iso \mathcal{HOM}(-, \mathcal{HOM}(-, -))$ & 
\end{tabular}
\end{center}
\end{PAR}

Here $f, g, F, G$ are morphisms in $\mathcal{S}$ which, in the $(!,*)$-row, are related by the {\em Cartesian} diagram
\[ \xymatrix{ \cdot \ar[r]^G \ar[d]_F  & \cdot \ar[d]^f \\ \cdot \ar[r]_g & \cdot } \]

As we explained in \cite[Appendix~A.2]{Hor15}, a reasonable precise definition is the following:

\vspace{0.2cm}

{\bf Definition \ref{DEF6FU}. }{\em Let $\mathcal{S}$ be a category with fiber products. 
A (symmetric) Grothendieck six-functor-formalism on $\mathcal{S}$ is a 1-bifibration and 2-bifibration of (symmetric) 2-multicategories with 1-categorical fibers
\[ p: \mathcal{D} \rightarrow \mathcal{S}^{\mathrm{cor}} \]
where $\mathcal{S}^{\mathrm{cor}}$ is the symmetric 2-multicategory of correspondences in $\mathcal{S}$ (cf.\@ Definition \ref{DEFSCOR}).}

\vspace{0.2cm}

1-Morphisms of $\mathcal{S}^{\mathrm{cor}}$ are multicorrespondences of the form
\begin{equation}\label{acor}
 \xymatrix{ &&&U \ar[dlll]_{g_1} \ar[dl]^{g_n}  \ar[dr]^{f} \\ 
S_1 & \cdots & S_n &;& T.
} 
\end{equation}
The push-forward along the 1-morphism (\ref{acor}) corresponds in the classical language to the functor
\[ f_! ( (g^*_1 - ) \otimes \cdots \otimes ( g_n^* - )).  \]
 
Hence, from such a bifibration we obtain the operations $f_*$, $f^*$ (resp.\@ $f^!$, $f_!$) as pull-back and push-forward along the correspondences
\[ f^{\op}: \vcenter{\xymatrix{ 
 & \ar[ld]_f X \ar@{=}[rd] &\\
 Y & ; & X   } }\quad \text{and} \quad f:
 \vcenter{\xymatrix{ 
 & \ar@{=}[ld] X \ar[rd]^f &\\
 X & ; & Y ,  }}
 \]
respectively. We get $\mathcal{E} \otimes \mathcal{F}$ for objects $\mathcal{E}, \mathcal{F}$ above $X$ as the target of a Cartesian $2$-ary multimorphism 
from the pair $\mathcal{E}, \mathcal{F}$ over the correspondence
\[ \vcenter{\xymatrix{
& & X \ar@{=}[lld] \ar@{=}[ld] \ar@{=}[rd] &  \\
X & X & ; & X. }}
\]

In \cite{Hor15} we showed that all isomorphisms of \ref{INTRO6FUISO} follow from this definition. 

We explain in Section \ref{GROTHWIRTH} that enlarging the domain of 2-morphisms to all morphisms, or to a class of ``proper'', or ``etale'' morphisms, respectively, one can easily encode
all sorts of more strict six-functor-formalisms, where either $f_! = f_*$ or $f^*=f^!$ for all morphisms (so called Grothendieck or Wirthm\"uller contexts) or where
we have canonical natural transformations $f_! \rightarrow f_*$ for all morphisms $f$ (such that the diagonal is ``proper'') or canonical morphisms $f^* \rightarrow f^!$ for all morphisms $f$ (which are ``etale''). 

Definition~\ref{DEF6FU} has the huge advantage that it also encodes all compatibilities between the isomorphisms of \ref{INTRO6FUISO}. For example, it is just a matter of contemplating a diagram of 2-morphisms in $\mathcal{S}^{\cor}$ to see that the diagram

\[\xymatrix{
G_! F^* (A \otimes g^*B)  \ar[d]_{(\otimes, *)}  && f^* g_! (A \otimes g^*B) \ar[ll]_{(!,*)}  \ar[d]^{(\otimes, !)} \\
G_! ((F^* A) \otimes F^*g^*B)   && f^* ((g_!A) \otimes B)  \ar[dd]^{(\otimes, *)} \\
G_! ((F^* A) \otimes (gF)^*B) \ar[d]_{(*,*)} \ar[u]^{(*,*)}  \\
G_! ((F^* A) \otimes G^*f^*B)  \ar[dr]_{(\otimes, !)}  &&  (f^* g_!A)  \otimes f^*B  \ar[dl]^{(*, !)} \\  
& (G_! F^* A) \otimes f^*B  &
}\]

commutes or ---  in the proper/Grothendieck case ---  that the diagram 

 \[ \xymatrix{
f_* \Hom(A, f^! B)  \ar[r]^{\sim} \ar[d]^{} &  \Hom(f_!A,B)  \\
f_* \Hom(f^*f_! A, f^! B)  \ar[r]^\sim &  \Hom(f_!A, f_*f^! B) \ar[u]
} \]

depicted on the front cover of Lipman's book \cite{LH09} on Grothendieck duality, commutes. 

\subsection*{Fibered multiderivators}

For a detailed introduction to derivators and fibered multiderivators we refer to \cite{Hor15}. 
Stable derivators, among other things, simplify, enhance and conceptually explain triangulated categories. Instead of considering just one category,
 a derivator $\DD$ specifies a category $\DD(I)$ for each diagram shape $I$ (small category), and pull-back functors $\alpha^*: \DD(J) \rightarrow \DD(I)$ for each
functor $\alpha: I \rightarrow J$. This has the advantage that a triangulation on the categories $\DD(I)$ does not have to be specified explicitly. Rather
the operations of taking cones and shifting objects are encoded as abstract homotopy limit and colimit functors, which are just left and right adjoints to certain of the given pull-back functors. Triangles are reconstructed from squares, i.e.\@ objects of $\DD(\Box)$ which are Cartesian and coCartesian at the same time.
All the axioms of triangulated categories are consequences of a rather intuitive set of properties of Kan extensions. 

A monoidal derivator specifies in addition a monoidal structure on the categories $\DD(I)$ which satisfies some additional axioms as for example
\[ \alpha^* ( - \otimes -) = ((\alpha^*  -) \otimes (\alpha^*  -))  \]
and the projection formula
\[ \alpha_! ( - \otimes (\alpha^* -)) = ((\alpha_!  -) \otimes - )  \]
for {\em certain} functors $\alpha$. Together with the base change formula
\[ \beta^* \alpha_! = B_! A^* \]
for {\em certain} functors $\alpha, \beta, A, B$ forming a Cartesian square,
this resembles a lot the datum and axioms of a six-functor-formalism in which $f^*=f^!$, i.e.\@ a Wirthm\"uller context.
By defining a  2-multicategory $\Dia^{\cor}$ of multicorrespondences of diagrams we make this analogy precise by showing the following general theorem.
(Note that a monoidal derivator is the same as a left fibered multiderivator over $\{\cdot\}$. )

\vspace{0.2cm}

{\bf Main theorem \ref{MAINTHEOREMFIBDER}}.  {\em
Let $\DD$ and $\SSS$ be pre-multiderivators satisfying (Der1) and (Der2) (cf.\@ \cite[Definition~1.3.5.]{Hor15}).
 A strict morphism of pre-multiderivators $\DD \rightarrow \SSS$ is a left (resp.\@ right) fibered multiderivator if and only if $\Dia^{\cor}(\DD) \rightarrow \Dia^{\cor}(\SSS)$ is a 1-opfibration (resp.\@ 1-fibration) of 2-multicategories. } 
 
 \vspace{0.2cm}

 Here $\Dia^{\cor}(\DD)$ is defined for any pre-multiderivator as an extension of the 2-multicategory of correspondences of diagrams $\Dia^{\cor}$. We have
 $\Dia^{\cor} = \Dia^{\cor}(\{\cdot\})$. However it is not essential that a pre-multiderivator is given a priori. For {\em any} 1-(op)fibration and 2-fibration $\mathcal{D} \rightarrow \Dia^{\cor}(\SSS)$ with 1-categorical fibers a (non-strict) pre-multiderivator can be reconstructed (cf.\@ \ref{PARDERCANWIRTH}). 
 
 Using the correspondence (cf.\@ \ref{PROPGROTHCONSTR}) between 1-opfibrations (and 2-fibrations) of 2-multicategories over $\Dia^{\cor}(\SSS)$ with 1-categorical fibers and pseudo-functors 
 $\Dia^{\cor}(\SSS) \rightarrow \mathcal{CAT}$ into the 2-multicategory of all categories (where the multi-structure is given by multivalued functors) this
 formulation unifies in a nice way the two pseudofunctors
\[   \Dia(\SSS)^{1-\op} \rightarrow \mathcal{CAT} \quad  \text{ resp.\@ } \quad \Dia^{\op}(\SSS)^{1-\op} \rightarrow \mathcal{CAT} \]
that have been associated with a fibered multiderivator in \cite{Hor15}, because there are embeddings of $\Dia(\SSS)^{1-\op}$ and $\Dia^{\op}(\SSS)^{1-\op}$ into
$\Dia^{\cor}(\SSS)$ (cf.\@ \ref{PAREMBEDDING1}--\ref{PAREMBEDDING2}).

For example, Ayoub had defined in \cite{Ayo07I, Ayo07II}  an {\em algebraic derivator} as a pseudo-functor
$\Dia(\SSS)^{1-\op} \rightarrow \mathcal{CAT}$ satisfying certain axioms, mentioning that this involved a choice because $\Dia^{\op}(\SSS)$ is an equally justified
forming. This problem led the author in \cite{Hor15} to the definition of a fibered multiderivator instead of using Ayoub's notion of algebraic derivator. 
The viewpoint in this article has the advantage not only of clarifying the difference of these two approaches but also of encoding most axioms of a fibered multiderivator in a more elegant way. 

The formal equalization of six-functor-formalisms and monoidal derivators explains many of their similarities. For example,
in both cases there is an internal monoidal product $\otimes$ (with adjoint denoted $\mathcal{HOM}$) and an external monoidal product $\boxtimes$ (with adjoint denoted $\mathbf{HOM}$).
The external monoidal product and Hom are compatible with the one on $\mathcal{S}^{\cor}$ given by $S \otimes T = S \times T$ and $\mathcal{HOM}(S, T) = S \times T$,  
and with the one on $\Dia^{\cor}$ given by $I \otimes J = I \times J$ and $\mathcal{HOM}(I, J) = I^{\op} \times J$, respectively. 
This is just a common feature of 1-/2- (op-)fibrations of 2-multicategories: The notions are transitive. Hence, for instance, if $\mathcal{D} \rightarrow \mathcal{S} \rightarrow \{ \cdot \}$ is a sequence of 1-/2- (op-)fibrations of multicategories, where  $\{ \cdot \}$ is the final multicategory, also $\mathcal{D} \rightarrow  \{ \cdot \}$ is a 1-/2- (op-)fibration. 
While $\mathcal{D} \rightarrow \mathcal{S}$ being a 1-opfibration encodes the existence of the internal monoidal product, $\mathcal{D} \rightarrow  \{ \cdot \}$ being a
1-opfibration encodes the existence of the external monoidal product.

From the abstract properties of 1-/2- (op-)fibrations of 2-multicategories we can derive that
\begin{eqnarray*}
\mathcal{E} \otimes \mathcal{F} &=& \Delta_\bullet (\mathcal{E} \boxtimes \mathcal{F}), \\
\mathcal{HOM}(\mathcal{E}, \mathcal{F}) &=& (\Delta')^\bullet (\mathbf{HOM}(\mathcal{E} , \mathcal{F} )),
\end{eqnarray*}
and the external product, resp. external Hom, can also be reconstructed from the internal one in an analogous way. 
For the meaning of $\Delta$ and $\Delta'$ see Section~\ref{ABSRELMONOIDAL}. 
Explicitly, the first formula specializes to: 
\[ \mathcal{E} \otimes \mathcal{F} = \Delta^* (\mathcal{E} \boxtimes \mathcal{F}) \]
for the diagonal map $\Delta: S \rightarrow S \times S$ in the six-functor-formalism case, resp. $\Delta: I \rightarrow I \times I$ in the monoidal derivator case.
The second specializes to
\[ \mathcal{HOM}(\mathcal{E}, \mathcal{F}) = \Delta^! \mathbf{HOM}(\mathcal{E}, \mathcal{F}) \]
in the six-functor-formalism case and to
\[ \mathcal{HOM}(\mathcal{E}, \mathcal{F}) = \pr_{2,*} \pi_* \pi^* \mathbf{HOM}(\mathcal{E}, \mathcal{F}) \]
in the monoidal derivator case, with the following functors:
\[ \xymatrix{ \tw(I) \ar[r]^\pi & I^{\op} \times I \ar[r]^-{\pr_2} & I,  } \]
where $\tw(I)$ is the twisted arrow category. 
The slightly different behavior is due to the different definitions of $\mathcal{S}^{\cor}$ and $\Dia^{\cor}$. 
The definition of $\Dia^{\cor}$ takes the 2-categorical nature of $\Dia$ into account. For the same reason, it encodes
the more complicated base change formula of derivators involving comma categories as opposed to the 
simpler base change formula of a six-functor-formalism. And for the same reason the duality on $\Dia^{\cor}$ is not given by the identity $S \mapsto S$ as for $\mathcal{S}^{\cor}$ but by $I \mapsto I^{\op}$. 

The upshot is that the theory of 1-/2- (op-)fibrations of 2-multicategories is sufficiently powerful to 
treat classical six-functor formalisms and monoidal (resp.\@ multi-)derivators alike. 

\subsection*{Derivator six-functor formalisms}

However this is not the end of the story. Of course, classical six-functor-formalisms are mostly defined on families of triangulated categories.
Fibered multiderivators were defined to enhance and simplify the latter. In the preceding article \cite{Hor15} we already explained how questions of cohomological and homological descent can be treated nicely using this notion. Desirable is therefore a derivator six-functor-formalism which encodes not only the interplay
of the ``6 functors'' but also of the 3 additional functors: pull-back along functors of diagrams, left Kan extension and right Kan extension. One could say: a 9-functor-formalism.
Yet the theory of 1-/2- (op-)fibrations of 2-multicategories is still sufficiently powerful to deal with this situation. For this
we have to define pre-2-multiderivators. These are families of 2-multicategories rather than 1-multicategories. For example, the 2-multicategory of multicorrespondences $\mathcal{S}^{\cor}$ has an associated pre-2-multiderivator $\SSS^{\cor}$. A derivator six-functor-formalism, of course, should be a left and right fibered multiderivator over $\SSS^{\cor}$. 
Such will be defined and discussed in a subsequent article \cite{Hor16}. 










\subsection*{Overview}

This article is rather foundational. It develops in sections \ref{SECTION2MULTICAT} and \ref{SECTIONFIB} the basics of 1-/2- (op)fibrations of 2-multicategories which do not appear in the literature in this form. This is an extension and unification of existing work \cite{Her00, Her04, Bak09, Buc14, Her99}.
In Section~\ref{SECTIONCOR}, the 2-multicategory of correspondences $\mathcal{S}^{\cor}$
 is defined, and it is explained how classical six-functor-formalisms can be encoded as certain 1-bifibrations of 2-multicategories over $\mathcal{S}^{\cor}$.
 In Section~\ref{SECTIONDIACOR}, the 2-multicategory of correspondences of diagrams $\Dia^{\cor}$ is defined, which is slightly more complicated because it has
 to take the 2-categorical flavour of $\Dia$ into account. In Section~\ref{SECTIONWIRTHFIBDER} and \ref{YOGA}, it is explained that the notion of certain 1-bifibrations of 2-multicategories over $\Dia^{\cor}(\SSS)$ is basically equivalent to the notion of fibered multiderivator over $\SSS$. 
 
 In Section~\ref{ABSRELMONOIDAL}, the interplay between internal and external monoidal product is discussed from the abstract perspective of 
 1-/2- (op-)fibrations of 2-multicategories. In Section~\ref{GROTHWIRTH}, Grothendieck and Wirthm\"uller contexts, i.e.\@ those in which either $f_*=f_!$, or $f^!=f^*$ holds, are
 axiomatized as well as intermediate formalisms which we call proper, or etale, six-functor-formalisms, those in which there is still a canonical morphism $f_! \rightarrow f_*$, or $f^* \rightarrow f^!$, for appropriate morphisms $f$. 
 

\section{2-Multicategories}\label{SECTION2MULTICAT}

The notion of 2-multicategory is a straight-forward generalization of the notion of 2-category. 
For lack of reference and because we want to stick to the case of (strict) 2-categories as opposed to bicategories, we list the
relevant definitions here:

\begin{DEF}\label{DEF2MULTICAT}
A {\bf 2-multicategory} $\mathcal{D}$ consists of 
\begin{itemize}
\item a class of objects $\Ob(\mathcal{D})$;
\item for $n \in \Z_{\ge 0}$, and for objects $X_1, \dots, X_n, Y$ in $\Ob(\mathcal{D})$ a category
\[ \Hom(X_1, \dots, X_n; Y);  \]
\item a composition, i.e.\@ for objects $X_1, \dots, X_n$, $Y_1, \dots, Y_m$, $Z$ in $\Ob(\mathcal{D})$ and for $i \in \{1,\dots, m\}$ a functor:
\begin{eqnarray*} 
\Hom(X_1, \dots, X_n; Y_i) \times \Hom(Y_1, \dots, Y_m; Z) &\rightarrow& \Hom(Y_1, \dots, Y_{i-1}, X_1, \dots, X_n, Y_{i+1}, \dots, Y_m; Z) \\
f, g &\mapsto& g \circ_i f;
\end{eqnarray*}
\item for $X \in \Ob(\mathcal{D})$ an identity object $\id_X$ in the category $\Hom(X; X)$;
\end{itemize}
satisfying strict associativity and identity laws. The composition w.r.t.\@ independent slots is commutative, i.e.\@ for $1\le i<j \le m$ 
if $f \in \Hom(X_1, \dots, X_n; Y_i)$ and $f' \in \Hom(X_1', \dots, X_k'; Y_j)$ and $g \in \Hom(Y_1, \dots, Y_m; Z)$ then 
\[   (g \circ_i f) \circ_{j+n-1} f'  =  (g \circ_j f') \circ_{i} f.     \]

A symmetric (braided) 2-multicategory is given by an action of the symmetric (braid) groups, i.e.\@ isomorphisms of categories
\[  \alpha: \Hom(X_1, \dots, X_n; Y) \rightarrow \Hom(X_{\alpha(1)}, \dots, X_{\alpha(n)}; Y) \]
for $\alpha \in S_n$ (resp.\@ $\alpha \in B_n$) forming an action which is strictly compatible with composition in the obvious way (in the braided case: substitution of strings).
\end{DEF}

The 1-composition of 2-morphisms is (as for usual 2-categories) determined by the following whiskering operations: Let $f, g \in \Hom(X_1, \dots, X_n; Y_i)$ and $h \in \Hom(Y_1, \dots, Y_m; Z)$ be 1-morphisms and let $\mu: f \Rightarrow g$ be a 2-morphism, i.e.\@ a morphism in the category $\Hom(X_1, \dots, X_n; Y_i)$. Then we define
\[ h  \ast \mu := \id_h \cdot \mu \]
where the right hand side is the image of the 2-morphism $\id_h \times \mu$ under the composition functor.
Similarly we define $\mu \ast h$ for $\mu: f \Rightarrow g$ with $f, g \in \Hom(Y_1, \dots, Y_m; Z)$ and $h \in \Hom(X_1, \dots, X_n; Y_i)$. 

\begin{PAR}\label{OPMULTI}
In the same way, we define a {\bf 2-opmulticategory} having categories of 1-morphisms of the form
\[ \Hom(X; Y_1, \dots, Y_n).  \]
For each 2-multicategory $\mathcal{D}$ there is a natural 2-opmulticategory $\mathcal{D}^{1-\op}$, and vice versa, where the direction of the 1-morphisms is flipped. 
\end{PAR}

\begin{DEF}\label{DEFPSEUDOFUN}
A {\bf pseudo-functor} $F: \mathcal{C} \rightarrow \mathcal{D}$ between 2-multicategories is given by the following data:
\begin{itemize}
\item 
for $X \in \Ob(\mathcal{C})$ an object $F(X) \in \Ob(\mathcal{C})$;
\item 
for  $X_1, \dots, X_n; Y \in \Ob(\mathcal{C})$, a functor
\[ \Hom(X_1, \dots, X_n; Y) \rightarrow \Hom(F(X_1), \dots, F(X_n); F(Y)); \]
\item 
for $X \in \Ob(\mathcal{C})$ a 2-isomorphism
\[ F_X: F(\id_X) \Rightarrow \id_{F(X)}; \]
\item 
for $X_1, \dots, X_n; Y_1, \dots, Y_m; Z \in \Ob(\mathcal{C})$ and $i \in \{1, \dots, m\}$  a natural isomorphism 
\[ F_{-,-}: F(-) \circ_i F(-) \Rightarrow F(- \circ_i -) \]
of functors
\begin{gather*} 
\Hom(X_1, \dots, X_n; Y_i) \times \Hom(Y_1, \dots, Y_m; Z) \\
 \rightarrow \Hom(F(Y_1), \dots, F(Y_{i-1}), F(X_1), \dots, F(X_n), F(Y_{i+1}), \dots, F(Y_m); F(Z));
 \end{gather*}
\end{itemize}
satisfying 
\[ F_{\id_Y,f} = F_Y \ast F(f) \qquad F_{g, \id_{Y_i}} = F(g) \ast  F_{Y_i}  \]
 for $f \in  \Hom(X_1, \dots, X_n; Y)$, and $g \in \Hom(Y_1, \dots, Y_m; Z)$, respectively,
and for composable $f, g, h$ that
\[ \xymatrix{
F(h) \circ_j F(g) \circ_i F(f) \ar[rr]^-{F(h) \ast F_{g,f}} \ar[d]_{  F_{h,g} \ast F(f)} &&  F(h) \circ_j F(g \circ_i f) \ar[d]^{F_{h,gf}} \\
F(h \circ_j g) \circ_i F(f)  \ar[rr]^-{F_{hg,f}} &&  F(h \circ_j g \circ_i f) 
} \]
commutes. A pseudo functor is called a {\bf strict functor} if all $F_{g,f}$ and $F_X$ are identities. 
\end{DEF}

\begin{DEF}\label{DEFPSEUDONATTRANS}
A {\bf pseudo-natural transformation} $\alpha: F_1, \dots, F_m \Rightarrow G$ between pseudo-functors $F_1, \dots, F_m; G: \mathcal{C} \rightarrow \mathcal{D}$ is given by: 
\begin{itemize}
\item 
for $X \in \Ob(\mathcal{C})$ a 1-morphism $\alpha(X) \in \Hom(F_1(X), \dots, F_m(X); G(X))$;
\item 
for each 1-morphism $f$ in $\Hom(X_1, \dots, X_n; Y)$ a 2-isomorphism
\[ \alpha_f: \alpha(Y) \circ (F_1(f), \dots, F_m(f))  \Rightarrow  G(f) \circ (\alpha(X_1), \dots, \alpha(X_n)); \] 
\end{itemize}
such that all the following diagrams commute:
\begin{itemize}
\item
for $f \in  \Hom(X_1, \dots, X_n; Y_i)$ 
and $g \in \Hom(Y_1, \dots, Y_k; Z)$:
{\footnotesize \[ \xymatrix{
\alpha(Z) (F_1(g) F_1(f), \dots, F_m(g) F_m(f)) \ar[rrr]^-{ (G(g) \ast  \alpha_{f}) (\alpha_g \ast F(f))} \ar[d]_{\alpha(Z) \ast ((F_1)_{g,f}, \dots, (F_m)_{g,f})} &&& G(g) G(f) (\alpha(Y_1), \dots, \alpha(X_1), \dots, \alpha(X_n), \dots, \alpha(Y_k)) \ar[d]^{G_{g,f} \ast (\dots)}  \\
\alpha(Z) (F_1(gf), \dots, F_m(gf))  \ar[rrr]^{\alpha_{gf}} &&& G(gf) (\alpha(Y_1), \dots, \alpha(X_1), \dots, \alpha(X_n), \dots, \alpha(Y_k))
} \]}
\item for $X \in \Ob(\mathcal{C})$:
\[ \xymatrix{
\alpha(X) (F_1(\id_{X}), \dots, F_n(\id_{X})) \ar[rr]^-{\alpha_{\id_X}} \ar[d]^{ \alpha(X) \ast ((F_1)_X, \dots, (F_n)_X)} && G(\id_X) \alpha(X) \ar[d]^{G_X \ast \alpha(X)} \\
\alpha(X) (\id_{F_1(X)}, \dots, \id_{F_n(X)}) \ar@{=}[rr] && \id_{G(X)} \alpha(X)
} \]
\item 
for each 2-morphism $f \Rightarrow g$ in $\Hom(X_1, \dots, X_n; Y)$:
\[ \xymatrix{
\alpha(Y) \cdot (F_1(f), \dots, F_m(f)) \ar[r] \ar[d] & G(f) \cdot (\alpha(X_1), \dots, \alpha(X_n)) \ar[d] \\
\alpha(Y) \cdot (F_1(g), \dots, F_m(g)) \ar[r] &  G(g) \cdot (\alpha(X_1), \dots, \alpha(X_n))
} \]
\end{itemize}
Similarly we define an {\bf oplax natural transformation}, if the morphism $\alpha_f$ is no longer required to be a 2-isomorphism but can be any 2-morphism.
We define a {\bf lax natural transformation} requiring that the morphism $\alpha_f$ goes in the other direction, with the diagrams above changed suitably.
\end{DEF}

\begin{DEF}\label{DEFMODIFICATION}
A {\bf modification} $\mu: \alpha \Rrightarrow \beta$ between  $\alpha, \beta: F_1, \dots, F_m \Rightarrow G$ (pseudo-, lax-, or oplax-) natural transformations is given by the following data:
\begin{itemize}
\item For $X \in \Ob(\mathcal{C})$ a 2-morphism 
\[ \mu_X: \alpha(X) \Rightarrow \beta(X) \]
\end{itemize}
such that for each 1-morphism  $f  \in \Hom(X_1, \dots, X_n; Y)$
the following diagram commutes:
\[ \xymatrix{
\alpha(Y) \circ (F_1(f), \dots, F_m(f))  \ar[r] \ar[d] & G(f) \circ (\alpha(X_1), \dots, \alpha(X_n)) \ar[d] \\
\beta(Y) \circ (F_1(f), \dots, F_m(f))   \ar[r] & F(f) \circ (\beta(X_1), \dots, \beta(X_n))
} \]
resp.\@ (in the lax case) the analogue diagram with the horizontal arrows reversed. 
\end{DEF}

\begin{LEMMA}
Let $\mathcal{C}$, $\mathcal{D}$ be 2-multicategories. Then the collection 
\[ \Fun(\mathcal{C}, \mathcal{D}) \]
of pseudo-functors, pseudo-natural transformations and modifications forms a 2-multicategory. 
Similarly the collections
\[ \Fun^{\mathrm{lax}}(\mathcal{C}, \mathcal{D}) \qquad  \Fun^{\mathrm{oplax}}(\mathcal{C}, \mathcal{D}) \]
of pseudo-functors, (op)lax natural transformations and modifications form 2-multicategories.
\end{LEMMA}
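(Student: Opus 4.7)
The plan is to produce the data required by Definition~\ref{DEF2MULTICAT} and to check every axiom pointwise, so that the whole statement reduces to the corresponding facts in $\mathcal{D}$. For fixed pseudo-functors $F_1, \dots, F_m, G : \mathcal{C} \rightarrow \mathcal{D}$ I would declare $\Hom(F_1, \dots, F_m; G)$ to be the category whose objects are pseudo-natural (resp.\@ lax, oplax) transformations as in Definition~\ref{DEFPSEUDONATTRANS} and whose morphisms are modifications as in Definition~\ref{DEFMODIFICATION}. That these form a category is immediate: vertical composition $(\nu \cdot \mu)_X := \nu_X \cdot \mu_X$ takes place in the category $\Hom(F_1(X), \dots, F_m(X); G(X))$ of $\mathcal{D}$, the identity modification has components $\id_{\alpha(X)}$, and the commutative square of Definition~\ref{DEFMODIFICATION} is stable under vertical pasting by the interchange law in $\mathcal{D}$.

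Next I would construct the composition functors $\circ_i$. Given $\alpha \in \Hom(F_1, \dots, F_n; G_i)$ and $\beta \in \Hom(G_1, \dots, G_m; H)$, set $(\beta \circ_i \alpha)(X) := \beta(X) \circ_i \alpha(X)$, and define the structure 2-cell $(\beta \circ_i \alpha)_f$, for $f$ a 1-morphism of $\mathcal{C}$, by pasting $\beta_f$ into the $i$-th slot of the $\alpha_f$-cell, with identity 2-cells on the remaining $G_j(f)$. On modifications $\mu: \alpha \Rrightarrow \alpha'$ and $\nu: \beta \Rrightarrow \beta'$ set $(\nu \circ_i \mu)_X := \nu_X \circ_i \mu_X$; functoriality of $\circ_i$ in the hom-categories is inherited directly from the functoriality of composition in $\mathcal{D}$. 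The identity transformation $\id_F: F \Rightarrow F$ has components $\id_F(X) := \id_{F(X)}$ with structure cells built from the coherence $F_X$ and $F_{g,f}$ of Definition~\ref{DEFPSEUDOFUN}. Strict associativity, the identity laws, and the commutativity of composition in independent slots then follow at each $X$ from the corresponding strict identities in $\mathcal{D}$.

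The real obstacle is to verify that $(\beta \circ_i \alpha)_f$ actually defines a pseudo-natural transformation, i.e.\@ that the hexagon and unit diagrams of Definition~\ref{DEFPSEUDONATTRANS} hold for the composite. This is a pasting argument: expanding the composite cell into its constituents $\alpha_f$, $\beta_f$, the coherences $(G_j)_{g,f}$ for $j \neq i$ and $H_{g,f}$, the required diagram factors as a juxtaposition of the hexagon for $\alpha$ and the hexagon for $\beta$, glued along one instance of $(G_i)_{g,f}$; the unit diagram is handled analogously using $(G_i)_X$. The lax and oplax variants require no additional argument: the 2-cells $\alpha_f, \beta_f$ point in the opposite direction but the pasting prescription never uses their invertibility, and the same diagram chase yields the coherence, so $\Fun^{\mathrm{lax}}(\mathcal{C}, \mathcal{D})$ and $\Fun^{\mathrm{oplax}}(\mathcal{C}, \mathcal{D})$ are also 2-multicategories.
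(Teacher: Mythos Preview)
Your approach matches the paper's: the paper itself leaves the proof to the reader and only spells out the composition $(\beta\circ_i\alpha)(X)=\beta(X)\circ_i\alpha(X)$ together with the structure 2-cell $(\beta\circ_i\alpha)_f$ obtained by first applying $\alpha_f$ (whiskered by $\beta(Y)$ and the $G_j(f)$ for $j\neq i$) and then $\beta_f$, exactly the pointwise pasting you describe. Two small slips worth cleaning up: the phrase ``pasting $\beta_f$ into the $i$-th slot of the $\alpha_f$-cell'' has the roles reversed (it is $\alpha_f$ that fills the $i$-th slot in the pasting for $\beta_f$), and the identity transformation $\id_F$ has structure cells that are genuine identities, so no appeal to $F_X$ or $F_{g,f}$ is needed there.
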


\begin{proof}We leave the proof to the reader, but will explicitly spell out how pseudo-natural transformations are composed:

Let $\alpha: F_1, \dots, F_m \Rightarrow G_i$ and $\beta: G_1, \dots, G_n \Rightarrow H$ be pseudo-natural transformations.  
Then the pseudo-natural transformation 
\[ \beta \circ_i \alpha : G_1, \dots, G_{i-1}, F_1, \dots, F_m, G_{i+1},  \dots, G_n \Rightarrow H \]
is given as follows. $(\beta \circ_i \alpha)(X)$ is just the composition of $\beta(X) \circ \alpha(X)$ and the 2-morphism 
\[ (\beta \circ_i \alpha)_f:  \beta(X) \circ_i \alpha(X) \circ (G_1(f), \dots, F_1(f), \dots, F_m(f), \dots, G_n(f)) \Rightarrow H(f) \circ (\beta(X_1)\alpha(X_1), \dots, \beta(X_n)\alpha(X_n)) \]
is given by the composition
\[ \beta(X) \circ_i \alpha(X) \circ (G_1(f), \dots, F_1(f), \dots, F_m(f), \dots, G_n(f)) \Rightarrow  \]
\[ \beta(X) \circ_i  (G_1(f), \dots, G_i(f), \dots, G_n(f)) \circ (\alpha(X_1), \dots, \alpha(X_n)) \Rightarrow \]
\[ H(f) \circ (\beta(X_1) \circ_i  \alpha(X_1), \dots, \beta(X_n)  \circ_i  \alpha(X_n)). \]
\end{proof}

\section{(Op)fibrations of 2-multicategories}\label{SECTIONFIB}

For (op)fibrations of (usual) multicategories the reader may consult \cite{Her00, Her04}, and for (op)fibrations of 2-categories  
\cite{Bak09, Buc14, Her99}. The definitions in this section however are slightly different from those in any of these sources. 

\begin{PAR}
Let 
\[ \xymatrix{
\mathcal{A} \ar[r] \ar[d] \ar@{}[rd]|-{\Swarrow^\mu} & \mathcal{B} \ar[d]^\beta \\
\mathcal{C} \ar[r]_\gamma & \mathcal{D}
} \]
be a 2-commutative diagram of (usual) categories, where $\mu$ is a natural isomorphism. Then we say that
the diagram is {\bf 2-Cartesian} if it induces an equivalence 
\[ \mathcal{A} \cong \mathcal{B} \times_{/ \mathcal{D}}^\sim \mathcal{C}, \]
where $\mathcal{B} \times_{/ \mathcal{D}}^\sim \mathcal{C}$ is the full subcategory of the comma category $\mathcal{B} \times_{/ \mathcal{D}} \mathcal{C}$
consisting of those objects $(b, c, \nu: \beta(b) \rightarrow \gamma(c))$, with $b \in \mathcal{B}$, $c \in \mathcal{C}$ in which the morphism $\nu$ is an isomorphism.   

If $\mu$ is an identity then the diagram is said to be {\bf Cartesian}, if it induces an equivalence of categories
\[ \mathcal{A} \cong \mathcal{B} \times_{\mathcal{D}} \mathcal{C}. \]

\end{PAR}
\begin{LEMMA}\label{LEMMACARTSQUAREISOFIB}
If 
\begin{equation}\label{eqcartdia} \vcenter{ \xymatrix{
\mathcal{A} \ar[r]^\delta \ar[d]_\alpha & \mathcal{B} \ar[d]^\beta \\
\mathcal{C} \ar[r]_\gamma & \mathcal{D}
} } \end{equation}
is a strictly commutative diagram of categories then:
\begin{enumerate}
\item If $\beta$ is an iso-fibration (i.e.\@ the corresponding functor between the groupoids of isomorphisms is 
a fibration or, equivalently, an opfibration) then for (\ref{eqcartdia}) the two notions
\[ \text{ 2-Cartesian } \text{ and }  \text{ Cartesian } \]
are equivalent.
\item If $\alpha$ is an iso-fibration then (\ref{eqcartdia}) is Cartesian if and only if  
\begin{equation} \label{eqcart}
\mathcal{A} \rightarrow  \mathcal{B} \times_{\mathcal{D}} \mathcal{C} \end{equation}
is fully-faithful and for any $b \in \mathcal{B}$ and $c \in \mathcal{C}$ with $\beta(b) = \gamma(c)$ there exists an $a \in \mathcal{A}$ with 
$\alpha(a) = c$ and an isomorphism $\kappa: \delta(a) \rightarrow b$ with $\beta(\kappa) = \id_{\beta(b)}$.  
\item If $\alpha$ and $\beta$ are fibrations (resp.\@ opfibrations) and $\delta$ maps Cartesian (resp.\@ coCartesian) morphisms to Cartesian (resp.\@ coCartesian) morphisms then (\ref{eqcart}) is fully-faithful if and only if  $\delta$ induces an isomorphism
\[  \Hom_{\mathcal{A}, \id_c}(a, a') \cong \Hom_{\mathcal{B}, \id_{\gamma(c)}}(\delta(a), \delta(a'))  \]
for all $c \in \mathcal{C}$ and $a, a' \in \mathcal{A}$ with $\alpha(a) = \alpha(a') = c$. 
In particular (\ref{eqcartdia}) is Cartesian, or equivalently 2-Cartesian, if and only if $\delta$ induces an equivalence of categories between the fibers
\[   \mathcal{A}_c  \cong \mathcal{B}_{\gamma(c)} \]
for all objects $c \in \mathcal{C}$.  
\end{enumerate} 
\end{LEMMA}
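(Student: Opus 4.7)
The plan is to treat the three parts separately, all in terms of the comparison functor $\Phi: \mathcal{A} \to \mathcal{B} \times_\mathcal{D} \mathcal{C}$ induced by the strict square. For (1), the key observation is that the canonical embedding $\mathcal{B} \times_\mathcal{D} \mathcal{C} \hookrightarrow \mathcal{B} \times_{/\mathcal{D}}^\sim \mathcal{C}$ sending $(b,c)$ to $(b,c,\id)$ is itself an equivalence when $\beta$ is an iso-fibration: given $(b,c,\nu)$ with $\nu: \beta(b) \iso \gamma(c)$, the iso-fibration property yields an isomorphism $\tilde\nu: b \iso b'$ with $\beta(\tilde\nu) = \nu$ and $\beta(b') = \gamma(c)$, and sending $(b,c,\nu)$ to $(b',c)$ defines a quasi-inverse. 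Consequently the two notions of Cartesianness for the original square agree.

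For (2), Cartesianness means that $\Phi$ is an equivalence, so the content is to reformulate essential surjectivity of $\Phi$ as the stated lifting condition, using that $\alpha$ is an iso-fibration. If the lifting holds, then $(\kappa, \id_c)$ is an isomorphism $\Phi(a) \iso (b,c)$ in the strict pullback, since $\beta(\kappa) = \id = \gamma(\id_c)$. Conversely, given an isomorphism $(\psi_\mathcal{B}, \psi_\mathcal{C}): \Phi(a_0) \iso (b,c)$, I would lift $\psi_\mathcal{C}^{-1}$ along $\alpha$ to an isomorphism $\tilde\psi: a \iso a_0$ with $\alpha(a) = c$ and set $\kappa := \psi_\mathcal{B} \circ \delta(\tilde\psi)$; the strict-pullback relation $\beta(\psi_\mathcal{B}) = \gamma(\psi_\mathcal{C})$ then forces $\beta(\kappa) = \id$.

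For part (3) the idea is to decompose hom-sets via (co)Cartesian lifts (the opfibration case is dual, so I focus on fibrations). For $a, a' \in \mathcal{A}$ with $\alpha(a) = c$ and for each $g: c \to \alpha(a')$, a chosen Cartesian lift $g^\ast a' \to a'$ gives a bijection $\Hom_{\mathcal{A}, g}(a, a') \cong \Hom_{\mathcal{A}, \id_c}(a, g^\ast a')$, and summing over $g$ exhibits $\Hom_\mathcal{A}(a, a')$ as a disjoint union of fiber hom-sets. A morphism $\Phi(a) \to \Phi(a')$ in $\mathcal{B} \times_\mathcal{D} \mathcal{C}$ is a pair $(f,g)$ with $\beta(f) = \gamma(g)$, and the analogous decomposition rewrites it as a disjoint union of $\Hom_{\mathcal{B}, \id_{\gamma(c)}}(\delta(a), (\gamma g)^\ast \delta(a'))$. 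Since $\delta$ preserves Cartesian morphisms, $\delta(g^\ast a') \to \delta(a')$ is Cartesian over $\gamma(g)$ and so identifies canonically with $(\gamma g)^\ast \delta(a')$; hence $\Phi$ is fully-faithful iff $\delta$ induces bijections on $\Hom_{\mathcal{A}, \id_c}(a, g^\ast a')$ for all $g$. Since every object of $\mathcal{A}_c$ arises as some $g^\ast a'$ (take $g = \id_c$), this is exactly the stated fiber condition.

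The ``in particular'' clause then follows formally by stringing the pieces together: fibrations are iso-fibrations, so (1) identifies Cartesian with 2-Cartesian; by (2), Cartesian is equivalent to fully-faithfulness of $\Phi$ together with the lifting condition; fully-faithfulness is the fiber hom-iso condition just established; and the lifting condition is precisely essential surjectivity of $\delta|_{\mathcal{A}_c}: \mathcal{A}_c \to \mathcal{B}_{\gamma(c)}$. I expect the main delicate point to lie in part (3), namely making sure that the preservation hypothesis on $\delta$ truly identifies the chosen Cartesian lifts on both sides compatibly enough for the hom-set decompositions to be respected by $\Phi$.
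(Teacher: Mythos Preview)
Your proposal is correct and follows essentially the same approach as the paper: part (1) via the quasi-inverse built by lifting isomorphisms along $\beta$, part (2) by using the iso-fibration $\alpha$ to straighten an essentially surjective image to one with $\alpha(a)=c$ on the nose, and part (3) by reducing to fiber hom-sets via Cartesian lifts and using that $\delta$ preserves them. Your decomposition of hom-sets as a disjoint union over $g$ is a slightly different packaging of the paper's argument (which fixes one $f: c \to c'$ at a time), but the content is identical.
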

\begin{proof}
1.\@ Indeed, if $\beta$ is an iso-fibration, the obvious functor 
\[ \mathcal{B} \times_{\mathcal{D}} \mathcal{C} \rightarrow \mathcal{B} \times_{/ \mathcal{D}}^\sim \mathcal{C} \]
has a quasi-inverse functor which maps an object $(b, c, \nu: \beta(b) \rightarrow \gamma(c))$ to $(b', c)$ for any choice
of coCartesian morphism $b \rightarrow b'$ (necessarily an isomorphism as well) over $\nu$. 

2.\@ Obviously if the condition is satisfied then the functor (\ref{eqcart}) is essentially surjective. If it is in turn essentially surjective,
for any $b \in \mathcal{B}$ and $c \in \mathcal{C}$ with $\beta(b) = \gamma(c)$ there exists an $a' \in \mathcal{A}$, an isomorphism
$\tau: \alpha(a') \rightarrow  c$, and an isomorphism $\kappa': \delta(a') \rightarrow b$ with $\beta(\kappa') = \gamma(\tau)$. 
Now choose a coCartesian morphism $\xi: a' \rightarrow a$ in $\mathcal{A}$  lying over $\tau$ which exists by assumption. It is  necessarily an isomorphism. Then 
we have $\alpha(a) = c$ and an isomorphism $\kappa:=   \kappa' \circ \delta(\xi^{-1}) $ with $\beta(\kappa) = \id_{\beta(b)}$. Hence the statement of 2.\@ is satisfied. 

3. The only if part is clear. For the if part, let $f: c \rightarrow c'$ be a morphism in $\mathcal{C}$. We have to show that   
\[ \Hom_{\mathcal{A}, f}(a, a'') \cong \Hom_{\mathcal{B}, \gamma(f)}(\delta(a), \delta(a'')).  \]
for any $a, a'' \in \mathcal{A}$ with $\alpha(a)=c, \alpha(a'')=c'$. Choose a Cartesian morphism $g: a' \rightarrow a''$ over $f$. Since $\delta$ maps $g$ to 
a Cartesian morphism we get a commutative diagram
\[ \xymatrix{
\Hom_{\mathcal{A}, \id_c}(a, a')  \ar[r]^-\delta \ar[d]_{ g\circ} & \Hom_{\mathcal{A}, \id_{\gamma(c)}}(\delta(a), \delta(a'))  \ar[d]^{ \delta(g) \circ}) \\
\Hom_{\mathcal{A}, f}(a, a'') \ar[r]_-\delta &  \Hom_{\mathcal{B}, \gamma(f)}(\delta(a), \delta(a''))
} \]
in which the vertical maps are isomorphisms. Hence it suffices to see the assertion of 3.\@ to show fully-faithfulness. If $\alpha, \beta$ are opfibrations one proceeds analogously choosing a coCartesian morphism. 
\end{proof}

\begin{DEF}\label{DEFCOCART}
Let $p: \mathcal{D} \rightarrow \mathcal{S}$ be a strict functor of 2-multicategories. 
A 1-morphism
\[ \xi \in \Hom_f(\mathcal{E}_1, \dots, \mathcal{E}_n; \mathcal{F}) \] 
in $\mathcal{D}$ over a 1-morphism $f \in \Hom(S_1, \dots, S_n; T)$
is called {\bf coCartesian} w.r.t.\@ $p$, 
 if for all $i$ and objects $\mathcal{F}_1,\dots,\mathcal{F}_m, \mathcal{G} \in \mathcal{D}$ with $\mathcal{F}_i=\mathcal{F}$, lying over $T_1, \dots, T_m, U \in \mathcal{S}$
 the diagram of categories
\[ \xymatrix{
\Hom_{\mathcal{D}}(\mathcal{F}_1, \dots, \mathcal{F}_m; \mathcal{G}) \ar[r]^-{\circ_i \xi} \ar[d] & \Hom_{\mathcal{D}}(\mathcal{F}_1, \dots, \mathcal{F}_{i-1}, \mathcal{E}_1, \dots, \mathcal{E}_n, \mathcal{F}_{i+1}, \dots, \mathcal{F}_m; \mathcal{G}) \ar[d] \\
\Hom_{\mathcal{S}}(T_1, \dots, T_m; U) \ar[r]^-{ \circ_i f} & \Hom_{\mathcal{S}}(T_1, \dots, T_{i-1}, S_1, \dots, S_n, T_{i+1}, \dots, T_m; U)
} \]
is 2-Cartesian. 

A 1-morphism
\[ \xi \in \Hom_f(\mathcal{E}_1, \dots, \mathcal{E}_n; \mathcal{F}) \] 
is called {\bf weakly coCartesian} w.r.t.\@ $p$, if 
\[ \xymatrix{
\Hom_{\id_T}(\mathcal{F}; \mathcal{G}) \ar[r]^-{\circ \xi} & \Hom_{f}(\mathcal{E}_1, \dots, \mathcal{E}_n; \mathcal{G})
} \]
 is an equivalence of categories for all $\mathcal{G} \in \mathcal{D}$ with $p(\mathcal{G}) = T$. 
\end{DEF}
If $p: \mathcal{D} \rightarrow \mathcal{S}$ is a 2-isofibration (cf.\@ Definition~\ref{DEF2FIB}) then a coCartesian 1-morphism is weakly coCartesian by the proof of Proposition~\ref{LEMMAWEAKLY} below. 

\begin{DEF}\label{DEFCART}
Let $p: \mathcal{D} \rightarrow \mathcal{S}$ be a strict functor of 2-multicategories. 
A 1-morphism
\[ \xi \in \Hom_f(\mathcal{E}_1, \dots, \mathcal{E}_n; \mathcal{F}) \] 
in $\mathcal{D}$ over $f \in \Hom(S_1, \dots, S_n; T)$
is called {\bf Cartesian} w.r.t.\@ $p$ and w.r.t.\@ the $i$-th slot, 
 if for all $\mathcal{G}_1,\dots,\mathcal{G}_m \in \mathcal{D}$ lying over $U_1, \dots, U_m \in \mathcal{S}$
 the diagram of categories
\[ \xymatrix{
\Hom_{\mathcal{D}}(\mathcal{G}_1, \dots, \mathcal{G}_m; \mathcal{E}_i) \ar[r]^-{ \xi \circ_i} \ar[d] & \Hom_{\mathcal{D}}(\mathcal{E}_1, \dots, \mathcal{E}_{i-1}, \mathcal{G}_1, \dots, \mathcal{G}_m, \mathcal{E}_{i+1}, \dots, \mathcal{E}_n; \mathcal{F}) \ar[d] \\
\Hom_{\mathcal{S}}(U_1, \dots, U_m; S_i) \ar[r]^-{ f \circ_i } & \Hom_{\mathcal{S}}(S_1, \dots, S_{i-1}, U_1, \dots, U_m, S_{i+1}, \dots, S_n; T)
} \]
is 2-Cartesian.  

A 1-morphism
\[ \xi \in \Hom_f(\mathcal{E}_1, \dots, \mathcal{E}_n; \mathcal{F}) \] 
 is called {\bf weakly Cartesian} w.r.t.\@ $p$ and the w.r.t.\@ $i$-th slot, if 
\[ \xymatrix{
\Hom_{\id_{S_i}}(\mathcal{G}; \mathcal{E}_i) \ar[r]^-{\xi \circ_i } & \Hom_{f}(\mathcal{E}_1, \dots, \mathcal{G}, \dots, \mathcal{E}_n; \mathcal{F})
} \]
is an equivalence of categories for all $\mathcal{G} \in \mathcal{D}$ with $p(\mathcal{G}) = S_i$. 
\end{DEF}

\begin{DEF}\label{DEF2FIB}
Let $p: \mathcal{D} \rightarrow \mathcal{S}$ be a strict functor of 2-multicategories. 
\begin{itemize}
\item $p$ is called a {\bf 1-opfibration of 2-multicategories} if for all 1-morphisms $f \in \Hom_{\mathcal{S}}(S_1, \dots, S_n; T)$ and all objects $\mathcal{E}_1, \dots, \mathcal{E}_n \in \mathcal{D}$ lying over $S_1, \dots, S_n \in \mathcal{S}$ there is an object $\mathcal{F} \in \mathcal{D}$ with $p(\mathcal{F}) = T$ and a coCartesian 1-morphism in $\Hom_f(\mathcal{E}_1, \dots, \mathcal{E}_n; \mathcal{F})$.

\item $p$ is called a {\bf 2-opfibration of 2-multicategories} if for $\mathcal{E}_1, \dots, \mathcal{E}_n; \mathcal{F} \in \mathcal{D}$ lying over $S_1, \dots, S_n; T \in \mathcal{S}$ the functors 
\[ \Hom_{\mathcal{D}}(\mathcal{E}_1, \dots, \mathcal{E}_n; \mathcal{F}) \rightarrow \Hom_{\mathcal{S}}(S_1, \dots, S_n; T) \]
are opfibrations, and the composition functors in $\mathcal{D}$ are morphisms of opfibrations, i.e.\@ if they map pairs of coCartesian 2-morphisms to coCartesian 2-morphisms.
 
\item $p$ is called a {\bf 1-fibration of 2-multicategories} if for all 1-morphisms $f \in \Hom_{\mathcal{S}}(S_1, \dots, S_n; T)$, for all $i=1, \dots, n$ and for all objects $\mathcal{E}_1, \overset{\widehat{i}}{\dots}, \mathcal{E}_n; \mathcal{F} \in \mathcal{D}$ lying over $S_1, \overset{\widehat{i}}{\dots}, S_n; T \in \mathcal{S}$ there is an object $\mathcal{E}_i \in \mathcal{D}$ with $p(\mathcal{E}_i)=S_i$ and a Cartesian 1-morphism w.r.t.\@ the $i$-th slot in $\Hom_f(\mathcal{E}_1, \dots, \mathcal{E}_n; \mathcal{F})$. 

\item $p$ is called a {\bf 2-fibration of 2-multicategories} if for $\mathcal{E}_1, \dots, \mathcal{E}_n; \mathcal{F} \in \mathcal{D}$ lying over $S_1, \dots, S_n; T \in \mathcal{S}$ the functors 
\[ \Hom_{\mathcal{D}}(\mathcal{E}_1, \dots, \mathcal{E}_n; \mathcal{F}) \rightarrow \Hom_{\mathcal{S}}(S_1, \dots, S_n; Y) \]
 are fibrations, and the composition functors in $\mathcal{D}$ are morphisms of fibrations, i.e.\@ if they map pairs of Cartesian 2-morphisms to Cartesian 2-morphisms.

\item Similarly we define the notions of {\bf 1-bifibration} and {\bf 2-bifibration}.

\item Let $S$ be an object in $\mathcal{S}$. The 2-category consisting of those objects, (1-ary) 1-morphisms, and 2-morphisms which $p$ maps to $S$, $\id_S$ and $\id_{\id_S}$ respectively is called the {\bf fiber} $\mathcal{D}_S$
of $p$ above $S$.

\item We say that $p$ has {\bf 1-categorical fibers}, if all fibers $\mathcal{D}_S$ are equivalent to 1-categories (this is also equivalent to all 2-morphism sets in the fibers being either empty or consisting of exactly one isomorphism). 

\item We say that $p$ has {\bf discrete fibers}, if all fibers $\mathcal{D}_S$ are equivalent to sets (this is also equivalent to all morphism categories in the fibers being either empty or
equivalent to the terminal category). 

\item $p$ is called a {\bf 2-isofibration} if $p$ induces a 2-fibration (or equivalently a 2-opfibration) when restricted to the strict 2-functor
\[ \mathcal{D}^{2-\sim} \rightarrow \mathcal{S}^{2-\sim} \]
where the 2-morphisms sets are the subsets of 2-isomorphisms in $\mathcal{D}$ and $\mathcal{S}$, respectively. 

Obviously every 2-fibration (or 2-opfibration) is a 2-isofibration. 
\end{itemize}
\end{DEF}

Note that $p$ is a {\bf 2-isofibration} precisely if the restriction $\mathcal{D}^{2-\sim} \rightarrow \mathcal{S}^{2-\sim}$ is full on 2-morphisms, i.e.\@ if
2-isomorphisms have a preimage under $p$. 



 For 2-isofibrations, by Lemma~\ref{LEMMACARTSQUAREISOFIB}, we could have defined (co)Cartesian 1-morphisms equivalently using the notion of Cartesian diagram instead of 2-Cartesian diagram. 

\begin{PROP}\label{LEMMAWEAKLY}
A 2-fibration or 2-opfibration of 2-multicategories
$p: \mathcal{D} \rightarrow \mathcal{S}$ is a 1-fibration if and only if the following two conditions hold:
\begin{enumerate}
\item For all 1-morphisms $f \in \Hom_{\mathcal{S}}(S_1, \dots, S_n; T)$ and all $i=1, \dots, n$ and all objects $\mathcal{E}_1, \overset{\widehat{i}}{\dots}, \mathcal{E}_n; \mathcal{F} \in \mathcal{D}$ with $p(\mathcal{E}_k)=S_k$ and $p(\mathcal{F})=T$ there is an object $\mathcal{E}_i$ with $p(\mathcal{E}_i)=S_i$ and a {\em weakly} Cartesian 1-morphism w.r.t.\@ the $i$-th slot in $\Hom_f(\mathcal{E}_1, \dots, \mathcal{E}_n; \mathcal{F})$;
\item The composition of weakly Cartesian 1-morphisms is weakly Cartesian. 
\end{enumerate}
\end{PROP}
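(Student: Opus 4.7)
Plan for the proof of Proposition~\ref{LEMMAWEAKLY}.

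\emph{``Only if'' direction.} Suppose $p$ is a $1$-fibration. Specializing the $2$-Cartesian square of Definition~\ref{DEFCART} to $g = \id_{S_i}$ shows that every Cartesian $1$-morphism in slot $i$ is weakly Cartesian in slot $i$, which gives (1). For (2), I first argue that weakly Cartesian is already equivalent to Cartesian (in the presence of a $1$-fibration and a $2$-(op)fibration). Given weakly Cartesian $\xi$ and a Cartesian $\xi_C$ for the same data, two applications of the universal property of weak Cartesianness produce $1$-morphisms $\phi: \mathcal{E}_i \to \mathcal{E}_i^C$ and $\psi: \mathcal{E}_i^C \to \mathcal{E}_i$ over $\id_{S_i}$, mutually inverse up to $2$-isomorphism, with $\xi \cong \xi_C \circ_i \phi$. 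Hence $\xi \circ_i (-) \cong \xi_C \circ_i (\phi \circ_1 (-))$ is a composite of the equivalence ``post-compose by $\phi$'' with the Cartesian equivalence $\xi_C \circ_i (-)$, so $\xi$ is Cartesian too. Composition of Cartesians being Cartesian, (2) follows.

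\emph{``If'' direction.} Assume (1) and (2). I show that every weakly Cartesian $\xi \in \Hom_f(\mathcal{E}_1, \ldots, \mathcal{E}_n; \mathcal{F})$ in slot $i$ is Cartesian in slot $i$; together with (1), this gives the $1$-fibration property. By Lemma~\ref{LEMMACARTSQUAREISOFIB}(3) -- applicable because the $2$-(op)fibration hypothesis makes the vertical arrows of the square in Definition~\ref{DEFCART} into (op)fibrations whose composition functors preserve (co)Cartesian $2$-morphisms -- it suffices to prove that for every $g \in \Hom_{\mathcal{S}}(U_1, \ldots, U_m; S_i)$ and all $\mathcal{G}_j$ over $U_j$, the functor
\[ \xi \circ_i : \Hom_g(\mathcal{G}_1, \ldots, \mathcal{G}_m; \mathcal{E}_i) \longrightarrow \Hom_{f \circ_i g}(\mathcal{E}_1, \ldots, \mathcal{G}_1, \ldots, \mathcal{G}_m, \ldots, \mathcal{E}_n; \mathcal{F}) \]
is an equivalence. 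Apply (1) to the morphism $g$ in slot $1$, with target $\mathcal{E}_i$ and the remaining sources fixed as $\mathcal{G}_2, \ldots, \mathcal{G}_m$, to obtain a weakly Cartesian $\chi \in \Hom_g(\mathcal{G}_1', \mathcal{G}_2, \ldots, \mathcal{G}_m; \mathcal{E}_i)$ for some $\mathcal{G}_1'$ over $U_1$; by (2), the composite $\xi \circ_i \chi$ is weakly Cartesian in the slot of the composite corresponding to slot $1$ of $\chi$. Associativity of multicategory composition then gives $\xi \circ_i (\chi \circ_1 \psi) = (\xi \circ_i \chi) \circ_i \psi$ for every $\psi \in \Hom_{\id_{U_1}}(\mathcal{G}_1; \mathcal{G}_1')$, so the diagram
\[ \Hom_{\id_{U_1}}(\mathcal{G}_1; \mathcal{G}_1') \xrightarrow{\chi \circ_1 (-)} \Hom_g(\mathcal{G}_1, \mathcal{G}_2, \ldots, \mathcal{G}_m; \mathcal{E}_i) \xrightarrow{\xi \circ_i (-)} \Hom_{f \circ_i g}(\ldots; \mathcal{F}) \]
has composite $(\xi \circ_i \chi) \circ_i (-)$. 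The first factor is an equivalence by weak Cartesianness of $\chi$, the composite is an equivalence by weak Cartesianness of $\xi \circ_i \chi$, and therefore the second factor $\xi \circ_i (-)$ is an equivalence.

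The main obstacle is the combinatorial bookkeeping of slots in the multicategory structure: one must check that (2) is interpreted so that $\xi \circ_i \chi$ inherits weak Cartesianness in precisely the slot of the composite corresponding to the weakly Cartesian slot of $\chi$, verify the associativity identity above after the appropriate relabeling, and handle the degenerate case $m = 0$ separately (by lifting along another slot or by a direct variant). Once these points are pinned down, the argument is the multicategory analogue of the classical Grothendieck-fibration fact that weakly Cartesian morphisms become Cartesian precisely when weakly Cartesian lifts exist and compose.
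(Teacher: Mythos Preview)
Your argument is correct and follows essentially the same strategy as the paper's. For the ``if'' direction, both you and the paper invoke Lemma~\ref{LEMMACARTSQUAREISOFIB}(3) to reduce the 2-Cartesianity of the defining square to a fibre-wise equivalence, pick a weakly (co)Cartesian lift of the composed-in morphism, and run a 2-out-of-3 argument on the chain
\[
\Hom_{\id}(\mathcal{G};\,\cdot\,) \longrightarrow \Hom_g(\,\cdot\,) \longrightarrow \Hom_{f \circ_i g}(\,\cdot\,).
\]

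Two minor differences are worth recording. First, the paper actually writes out the dual (1-opfibration, coCartesian) case and only remarks that the stated 1-fibration case is similar. In the opfibration version, the auxiliary weakly coCartesian lift $\xi'$ over $g$ exists for \emph{every} $g$, including $0$-ary ones, so the $m=0$ issue you flag simply does not arise there. Your caution is appropriate: it is precisely one of the slot asymmetries the paper alludes to in the sentence following the statement.

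Second, for the ``only if'' direction your proof is more complete than the paper's. The paper shows (co)Cartesian $\Rightarrow$ weakly (co)Cartesian via Lemma~\ref{LEMMACARTSQUAREISOFIB}(2) (using the 2-isofibration hypothesis) and then cites Lemma~\ref{LEMMACOMPOSCART} for closure under composition; but condition~(2) concerns \emph{weakly} Cartesian morphisms, so one still needs the observation you supply --- that in a 1-fibration every weakly Cartesian morphism is already Cartesian --- to close the loop. Your comparison with a chosen Cartesian lift, producing an equivalence $\phi$ and then invoking Lemmas~\ref{LEMMAEQUIVALENCECARTCOCART}, \ref{LEMMACOMPOSCART}, and~\ref{LEMMA1COCART2ISO}, is exactly the right way to do this.
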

A similar statement holds for 1-opfibrations where it is important that the Cartesian morphisms are composed w.r.t.\@ the correct slot (otherwise see \ref{LEMMACOMPOSCART}). 
\begin{proof}
Let $f \in \Hom_{\mathcal{S}}(S_1, \dots, S_n; T_i)$, and let $\xi \in \Hom_f(\mathcal{E}_1, \dots, \mathcal{E}_n; \mathcal{F}_i)$ be a weakly coCartesian morphism with $p(\xi) = f$. We have to show that $\xi$ is coCartesian.

By Lemma~\ref{LEMMACARTSQUAREISOFIB}, 3.\@, to prove that $p$ is a 1-fibration, it suffices to show that  
\[ \Hom_g(\mathcal{F}_1, \dots, \mathcal{F}_m; \mathcal{G}) \rightarrow \Hom_{g \circ_i f}(\mathcal{F}_1, \dots, \mathcal{F}_{i-1}, \mathcal{E}_1, \dots, \mathcal{E}_n, \mathcal{F}_{i+1}, \dots, \mathcal{F}_m; \mathcal{G}) \]
is an equivalence of categories for all $g \in \Hom(T_1, \dots, T_m; U)$. Now choose another weakly coCartesian 1-morphism
\[ \xi' \in \mathcal{F}_1, \dots, \mathcal{F}_m \rightarrow \mathcal{G}' \]
over $g$. 
We get the following sequence of functors
\[  \xymatrix{ \Hom_{\id_U}(\mathcal{G}'; \mathcal{G}) \ar[r]^-{\circ \xi'} &  \Hom_g(\mathcal{F}_1, \dots, \mathcal{F}_m; \mathcal{G})  \ar[r]^-{\circ \xi} &  \Hom_{g \circ f}(\mathcal{F}_1, \dots, \mathcal{F}_{i-1}, \mathcal{E}_1, \dots, \mathcal{E}_n, \mathcal{F}_{i+1}, \dots, \mathcal{F}_m; \mathcal{G}). } \]
Since the composition $\xi' \circ \xi$ is also weakly coCartesian the left functor {\em and} the composition are equivalences of categories. Hence
also the right functor is an equivalence. 

To show the converse, we show that coCartesian morphisms are weakly coCartesian. The following Lemma states that,
in general, coCartesian morphisms are stable under composition. 
Let $f \in \Hom_{\mathcal{S}}(S_1, \dots, S_n; T)$, and let $\xi \in \Hom_f(\mathcal{E}_1, \dots, \mathcal{E}_n; \mathcal{F})$ be a coCartesian morphism with $p(\xi) = f$.
In particular, the diagram 
\[ \xymatrix{
\Hom_{\mathcal{D}}(\mathcal{F}; \mathcal{G}) \ar[r]^-{\circ_i \xi} \ar[d] & \Hom_{\mathcal{D}}( \mathcal{E}_1, \dots, \mathcal{E}_n; \mathcal{G}) \ar[d] \\
\Hom_{\mathcal{S}}(U; U) \ar[r]^-{ \circ_i f} & \Hom_{\mathcal{S}}(S_1, \dots, S_n; U)
} \]
is 2-Cartesian and hence (this uses that we have a 2-isofibration) satisfies the statement of Lemma~\ref{LEMMACARTSQUAREISOFIB}, 2.\@ which implies that 
\[ \Hom_{\id_U}(\mathcal{F}; \mathcal{G}) \rightarrow \Hom_{f}( \mathcal{E}_1, \dots, \mathcal{E}_n; \mathcal{G}) \] 
is an equivalence. 
\end{proof}

\begin{LEMMA}\label{LEMMACOMPOSCART}
Let $p: \mathcal{D} \rightarrow \mathcal{S}$ be a strict functor between 2-multicategories. Then the composition of (co)Cartesian 1-morphisms (resp.\@ 2-morphisms) is (co)Cartesian.
For {\em Cartesian} 1-morphisms this holds true only if the slot used for the composition agrees with the slot at which the second morphism is Cartesian. Otherwise we have the following statement: 
If $\xi \in \Hom(\mathcal{E}_1, \dots, \mathcal{E}_n; \mathcal{F}_i)$ is a {\em coCartesian} 1-morphism and $\xi'  \in \Hom(\mathcal{F}_1, \dots, \mathcal{F}_m; \mathcal{G})$ is a {\em Cartesian} 1-morphism  w.r.t. the $j$-th slot ($i \not= j$)  then the composition
\[ \xi' \circ_j \xi \in \Hom_{\mathcal{D}}(\mathcal{F}_1, \dots, \mathcal{F}_{i-1}, \mathcal{E}_1, \dots, \mathcal{E}_n, \mathcal{F}_{i+1},  \dots, \mathcal{F}_m; \mathcal{G})  \]
is {\em Cartesian} w.r.t.\@ the $j$-th slot if $j<i$ and w.r.t.\@ the $j+n-1$-th slot if $j>i$. (This holds true in particular also in the case $n=0$). 
\end{LEMMA}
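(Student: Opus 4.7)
The backbone of the argument is the pasting lemma for 2-Cartesian squares of categories: given
\[ \xymatrix{
A \ar[r] \ar[d] & B \ar[r] \ar[d] & C \ar[d] \\
D \ar[r] & E \ar[r] & F
} \]
with both inner squares 2-Cartesian, the outer rectangle is 2-Cartesian as well. This follows directly from unwinding the defining equivalence through comma fiber products (and is analogous to the classical pasting lemma for ordinary pullbacks).

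For two coCartesian 1-morphisms $\xi$ and $\xi'$ composed at the $i$-th slot, the plan is to paste the defining 2-Cartesian square of $\xi'$ (on the left) with the defining 2-Cartesian square of $\xi$ (on the right, obtained by substituting the output of $\xi'$ at the appropriate position). By the commutativity of composition in independent slots (Definition~\ref{DEF2MULTICAT}), the outer composite square is precisely the square testing coCartesianness of $\xi' \circ_i \xi$, and the pasting lemma yields its 2-Cartesianness. The analogous strategy applies verbatim to Cartesian 1-morphisms when the slot of composition agrees with the slot at which the second morphism is Cartesian; in both 1-morphism cases this is essentially all there is to it.

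For 2-morphisms, the statement reduces to the elementary fact that Cartesian (resp.\@ coCartesian) morphisms with respect to any functor of 1-categories are closed under composition, applied to the fiber functor $\Hom_\mathcal{D}(\mathcal{E}_1, \dots, \mathcal{E}_n; \mathcal{F}) \to \Hom_\mathcal{S}(S_1, \dots, S_n; T)$. Here no interaction with horizontal composition enters, so the verification is immediate.

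The mixed case requires the most care, but the argument is again a pasting. Let $\xi$ be coCartesian at its (unique target-list) slot $i$ and $\xi'$ Cartesian at its $j$-th slot with $j \neq i$. To test Cartesianness of $\xi' \circ_i \xi$ at slot $j$ (if $j < i$) or $j+n-1$ (if $j > i$), one inserts auxiliary objects $\mathcal{H}_1, \dots, \mathcal{H}_k$ in place of $\mathcal{F}_j$ and examines the resulting test square. Using the commutativity of composition in independent slots, the test square factors horizontally into the Cartesian square of $\xi'$ at slot $j$ and the coCartesian square of $\xi$ at its shifted slot; both are 2-Cartesian by hypothesis, so pasting once more gives the conclusion. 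The main obstacle is not conceptual but purely bookkeeping: correctly tracking the reindexing of slots before and after the two substitutions and making sure the associativity and commutativity axioms of a 2-multicategory identify the two ways of composing. The hypothesis $i \neq j$ is essential here, since for $i=j$ the two substitutions would compete for the same position and the horizontal factorization of squares would simply fail to exist.
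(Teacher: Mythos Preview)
Your proposal is correct and follows essentially the same approach as the paper: in each case one writes the relevant test square as a horizontal paste of two 2-Cartesian squares (one coming from $\xi'$, one from $\xi$) and invokes the pasting lemma; for 2-morphisms the paper likewise dismisses the claim as the well-known 1-categorical fact.

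One small correction: in the pure coCartesian case the identity $(-\circ_j \xi')\circ_{j-1+i}\xi = -\circ_j(\xi'\circ_i\xi)$ that identifies the outer rectangle with the test square for $\xi'\circ_i\xi$ is the \emph{associativity} axiom of a 2-multicategory, not the commutativity of composition in independent slots. Commutativity of independent slots is exactly what is needed in the mixed case (as you and the paper both say), where one must rewrite $(\xi'\circ_j\rho)\circ_i\xi$ as $(\xi'\circ_i\xi)\circ_{j+n-1}\rho$ for $i<j$; but for the nested composition in the coCartesian case it is plain associativity. This does not affect the validity of your argument, only the label attached to the relevant axiom.
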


\begin{proof}
The 1-categorical statement is well-known, hence the composition of (co)Cartesian 2-morphisms is (co)Cartesian. 
We now show that the composition of coCartesian 1-morphisms is coCartesian. 
Let $f \in \Hom_{\mathcal{S}}(S_1, \dots, S_n, T_i)$ and $f' \in \Hom_{\mathcal{S}}(T_1, \dots, T_m, U_j)$ be arbitrary 1-morphisms in $\mathcal{S}$, and let
\[ \xi \in \Hom_f(\mathcal{E}_1, \dots, \mathcal{E}_n; \mathcal{F}_i) \] 
and
\[ \xi' \in \Hom_{f'}(\mathcal{F}_1, \dots, \mathcal{F}_m; \mathcal{G}_j) \]
be coCartesian morphisms.  
We want to show that their composition w.r.t.\@ the $i$-th-slot
\[ \xi' \circ_i \xi \in \Hom_{f' \circ_i f}(\mathcal{F}_1, \dots, \mathcal{F}_{i-1}, \mathcal{E}_1, \dots, \mathcal{E}_n, \mathcal{F}_{i+1}, \dots,  \mathcal{F}_m; \mathcal{G}_j) \]
is Cartesian.  

Let $\mathcal{G}_1, \overset{\widehat{j}}{\dots},  \mathcal{G}_k \in \mathcal{D}$ be objects lying over $U_1, \overset{\widehat{j}}{\dots}, U_k \in \mathcal{S}$, and let $\mathcal{H} \in \mathcal{D}$ an object over $V  \in \mathcal{S}$ (all arbitrary). 
Consider the diagram

\[ \xymatrix{
\Hom_{\mathcal{D}}\left({\begin{array}{c}\mathcal{G}_1\\ \dots\\ \mathcal{G}_k \end{array}}; \mathcal{H}\right)    \ar[r]^{\circ_j \xi'}  \ar[d] &
\Hom_{\mathcal{D}}\left({\begin{array}{c}\mathcal{G}_1\\ \dots\\ \mathcal{G}_{j-1}  \\ \mathcal{F}_1 \\ \dots \\ \mathcal{F}_m \\
\mathcal{G}_{j+1}\\ \dots\\ \mathcal{G}_{k} \end{array}}; \mathcal{H}\right) \ar[r]^{\circ_i \xi}  \ar[d]  &
\Hom_{\mathcal{D}}\left({\begin{array}{c}\mathcal{G}_1\\ \dots\\ \mathcal{G}_{j-1} \\ \mathcal{F}_1 \\ \dots \\ \mathcal{F}_{i-1} \\ \mathcal{E}_1 \\ \dots \\ \mathcal{E}_n \\ \mathcal{F}_{i+1} \\ \dots \\ 
 \mathcal{F}_m \\
\mathcal{G}_{j+1}\\ \dots\\ \mathcal{G}_{k} \end{array}}; \mathcal{H}\right)   \ar[d]  \\
\Hom_{\mathcal{S}}\left({\begin{array}{c}U_1\\ \dots\\ U_k \end{array}}; V \right)    \ar[r]^{\circ_j f'}   &
\Hom_{\mathcal{S}}\left({\begin{array}{c}U_1\\ \dots\\ U_{j-1}  \\ T_1 \\ \dots \\ T_m \\
U_{j+1}\\ \dots\\ U_{k} \end{array}}; V\right) \ar[r]^{\circ_i f}    &
\Hom_{\mathcal{S}}\left({\begin{array}{c}U_1\\ \dots\\ U_{j-1}  \\ T_1 \\ \dots \\ T_{i-1} \\ S_1 \\ \dots \\ S_n \\ T_{i+1} \\ \dots \\ 
 T_m \\
U_{j+1}\\ \dots\\ U_{k} \end{array}}; V\right)   
} 
\]
The right hand square is 2-Cartesian because $\xi$ is coCartesian, and the left square is 2-Cartesian because $\xi'$ is coCartesian. Hence also the composed square is 2-Cartesian, i.e. $\xi' \circ \xi$ is coCartesian as well. 

 The assertion about the composition of 1-Cartesian morphisms is proven in the same way. 
 For the additional statement, let $f \in \Hom_{\mathcal{S}}(S_1, \dots, S_n, T_i)$ and $f' \in \Hom_{\mathcal{S}}(T_1, \dots, T_m, U)$ be arbitrary 1-morphisms in $\mathcal{S}$, and let
\[ \xi \in \Hom_f(\mathcal{E}_1, \dots, \mathcal{E}_n; \mathcal{F}_i) \] 
be coCartesian (here $n=0$ is possible) and
\[ \xi' \in \Hom_{f'}(\mathcal{F}_1, \dots, \mathcal{F}_m; \mathcal{G}) \]
be Cartesian w.r.t.\@ to the slot $j \not= i$. To fix notation assume $i<j$. 

We want to show that their composition w.r.t.\@ the $i$-th-slot
\[ \xi' \circ_i \xi \in \Hom_{f' \circ_i f}(\mathcal{F}_1, \dots, \mathcal{F}_{i-1}, \mathcal{E}_1, \dots, \mathcal{E}_n, \mathcal{F}_{i+1}, \dots,  \mathcal{F}_m; \mathcal{G}) \]
is Cartesian w.r.t.\@ to the slot $j+n-1$. 

Let $\mathcal{E}_1', \dots,  \mathcal{E}_k' \in \mathcal{D}$ be objects lying over $S_1', \dots, S_k' \in \mathcal{S}$ (all arbitrary). 
Consider the diagram

\[ \xymatrix{
\Hom_{\mathcal{D}}\left({\begin{array}{c}\mathcal{E}_1'\\ \dots\\ \mathcal{E}_k' \end{array}}; \mathcal{F}_j\right)    \ar[r]^{\xi' \circ_j}  \ar[d] &
\Hom_{\mathcal{D}}\left({\begin{array}{c}\mathcal{F}_1\\ \dots\\ \mathcal{F}_{j-1}  \\ \mathcal{E}_1' \\ \dots \\ \mathcal{E}_k' \\
\mathcal{F}_{j+1}\\ \dots\\ \mathcal{F}_{m} \end{array}}; \mathcal{G}\right) \ar[r]^{\circ_i \xi}  \ar[d]  &
\Hom_{\mathcal{D}}\left({\begin{array}{c}\mathcal{F}_1\\ \dots\\ \mathcal{F}_{i-1} \\ \mathcal{E}_1 \\ \dots \\ \mathcal{E}_{n} \\ \mathcal{F}_{i+1} \\ \dots \\ \mathcal{F}_{j-1} \\ \mathcal{E}_1' \\ \dots \\ \mathcal{E}_k'  \\ \mathcal{F}_{j+1} \\ \dots \\ 
 \mathcal{F}_m \end{array}};  \mathcal{G} \right)   \ar[d]  \\
\Hom_{\mathcal{S}}\left({\begin{array}{c}S_1'\\ \dots\\ S_k' \end{array}}; T_j\right)    \ar[r]^{f' \circ_j}  &
\Hom_{\mathcal{S}}\left({\begin{array}{c}T_1\\ \dots\\ T_{j-1}  \\ S_1' \\ \dots \\ S_k' \\
T_{j+1} \\ \dots\\ T_{m} \end{array}}; U\right) \ar[r]^{\circ_i f}    &
\Hom_{\mathcal{S}}\left({\begin{array}{c}T_1\\ \dots\\ T_{i-1} \\ S_1 \\ \dots \\ S_{n} \\ T_{i+1} \\ \dots \\ T_{j-1} \\ S_1' \\ \dots \\ S_k'  \\ T_{j+1} \\ \dots \\ 
 T_m \end{array}};  U \right)   
} \]
Now note that the composed functor
\[ \rho \mapsto (\xi' \circ_j \rho) \circ_i \xi   \]
is the same as
\[ \rho \mapsto    (\xi' \circ_i \xi) \circ_{j+n-1}  \rho  \]
because of the independence of slots (analogously for the bottom line functors). The right hand square is 2-Cartesian because $\xi$ is coCartesian, and the left square is 2-Cartesian because $\xi'$ is Cartesian w.r.t.\@ the $i$-th slot. Hence also the composed square is 2-Cartesian, i.e. $\xi' \circ_j \xi$ is Cartesian w.r.t.\@ the slot $i+n-1$ as well.  
\end{proof}

\begin{PAR}\label{PARPSEUDONATEQUIVALENCE}
Recall the definition of pseudo-functor between strict 2-categories, pseudo-natural transformations, and modifications (Definitions~\ref{DEFPSEUDOFUN}--\ref{DEFMODIFICATION}). 
Let $F$, $G$ be pseudo-functors from a 2-category $\mathcal{D}$ to a 2-category $\mathcal{D}'$.
A pseudo-natural transformation $\xi: F \rightarrow G$ is called an {\bf equivalence} if there are a pseudo-natural transformation
$\eta: G \rightarrow F$, and modifications (isomorphisms) $\xi \circ \eta \cong \id_G$, and $\eta \circ \xi \cong \id_F$. 
\end{PAR}
\begin{LEMMA}
A pseudo-natural transformation $\xi: F \rightarrow G$ is an equivalence if and only if for all $\mathcal{E} \in \mathcal{D}$
\[ \xi_{\mathcal{E}}: F(\mathcal{E}) \rightarrow G(\mathcal{E}) \]
 is an equivalence in the target-2-category $\mathcal{D}'$. In other words,  choosing a point-wise inverse sets up automatically a pseudo-natural transformation as well, and the
point-wise natural transformations between the compositions constitute the required modifications. 
\end{LEMMA}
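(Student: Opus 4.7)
The plan is to handle the two directions separately, with the ``only if'' direction being immediate and the ``if'' direction requiring a careful construction of the inverse pseudo-natural transformation. The ``only if'' direction follows because a modification $\xi \circ \eta \cong \id_G$ has components which are precisely 2-isomorphisms $\xi_{\mathcal{E}} \circ \eta_{\mathcal{E}} \cong \id_{G(\mathcal{E})}$ (and similarly for the other composition), so each $\xi_{\mathcal{E}}$ is an equivalence in $\mathcal{D}'$ with quasi-inverse $\eta_{\mathcal{E}}$.

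For the ``if'' direction, for each $\mathcal{E} \in \mathcal{D}$ I would pick a quasi-inverse $\eta_{\mathcal{E}}: G(\mathcal{E}) \rightarrow F(\mathcal{E})$ of $\xi_{\mathcal{E}}$ together with 2-isomorphisms $u_{\mathcal{E}}: \xi_{\mathcal{E}} \circ \eta_{\mathcal{E}} \Rightarrow \id_{G(\mathcal{E})}$ and $v_{\mathcal{E}}: \eta_{\mathcal{E}} \circ \xi_{\mathcal{E}} \Rightarrow \id_{F(\mathcal{E})}$; by a standard zig-zag adjustment we may further assume that $(u,v)$ satisfy the triangle identities, i.e.\@ form an adjoint equivalence. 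For each 1-morphism $f: \mathcal{E} \rightarrow \mathcal{E}'$ in $\mathcal{D}$ the required 2-isomorphism
\[ \eta_f: \eta_{\mathcal{E}'} \circ G(f) \Rightarrow F(f) \circ \eta_{\mathcal{E}} \]
is then forced to be the ``conjugate'' of $\xi_f$: it is defined as the composition
\[ \eta_{\mathcal{E}'} G(f) \xRightarrow{\eta_{\mathcal{E}'} G(f) u_{\mathcal{E}}^{-1}} \eta_{\mathcal{E}'} G(f) \xi_{\mathcal{E}} \eta_{\mathcal{E}} \xRightarrow{\eta_{\mathcal{E}'} \xi_f^{-1} \eta_{\mathcal{E}}} \eta_{\mathcal{E}'} \xi_{\mathcal{E}'} F(f) \eta_{\mathcal{E}} \xRightarrow{v_{\mathcal{E}'} F(f) \eta_{\mathcal{E}}} F(f) \eta_{\mathcal{E}}, \]
which is evidently a 2-isomorphism since each building block is.

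Next I would verify that this data assembles into a pseudo-natural transformation $\eta: G \Rightarrow F$. The compatibility with identities is a short calculation using $\xi_{\id_{\mathcal{E}}}$ and the unit triangle identity for $(u,v)$; the compatibility with composition $\eta_{gf}$ versus $\eta_g \ast \eta_f$ reduces, after inserting the appropriate $u_{\mathcal{E}} u_{\mathcal{E}}^{-1}$ and $v v^{-1}$ pairs and cancelling via the triangle identities, to the analogous coherence axiom of $\xi$ applied to $\xi_{gf}$ versus $\xi_g \ast \xi_f$. Finally, I would check that $u$ and $v$ themselves define modifications $\xi \circ \eta \Rrightarrow \id_G$ and $\eta \circ \xi \Rrightarrow \id_F$; for this one writes out the compatibility square required in Definition~\ref{DEFMODIFICATION} and sees that it is, by construction of $\eta_f$, literally the definition of $\eta_f$ composed with $\xi$ (respectively $\xi$ composed with $\eta_f$), which after one round of triangle identities becomes tautologous.

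The main obstacle is the coherence verification for $\eta$ with respect to composition, since one has to bookkeep several whiskerings of $\xi_f$, $\xi_g$, $u$, $v$ and $G_{g,f}$, $F_{g,f}$. This is a long but mechanical string diagram chase; assuming the adjoint equivalence form of $(u,v)$ makes the cancellations transparent, and the standard slogan ``conjugating a pseudo-natural transformation by pointwise adjoint equivalences yields a pseudo-natural transformation'' applies verbatim.
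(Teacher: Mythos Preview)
Your proof is correct and follows essentially the same route as the paper: the paper also upgrades the pointwise inverses to adjoint equivalences and defines the inverse transformation's structure 2-cell $\xi'_f$ by the same three-step conjugation $\xi'(\mathcal{F})G(f) \Rightarrow \xi'(\mathcal{F})G(f)\xi(\mathcal{E})\xi'(\mathcal{E}) \Leftarrow \xi'(\mathcal{F})\xi(\mathcal{F})F(f)\xi'(\mathcal{E}) \Rightarrow F(f)\xi'(\mathcal{E})$, then leaves the coherence checks to the reader. You have in fact been more explicit than the paper about what those remaining checks are and why the adjoint-equivalence assumption makes them tractable.
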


\begin{proof}
The ``only if'' implication is clear. For the ``if'' part choose a quasi-inverse $\xi'(\mathcal{E}): G(\mathcal{E}) \rightarrow F(\mathcal{E})$  to $\xi(\mathcal{E}): F(\mathcal{E}) \rightarrow G(\mathcal{E})$ for all objects $\mathcal{E} \in \mathcal{D}$. 
Hence, for all  $\mathcal{E} \in \mathcal{D}$, we can find 
isomorphisms $\id_{G(\mathcal{E})} \Rightarrow \xi(\mathcal{E}) \circ \xi'(\mathcal{E})$ and  $\xi'(\mathcal{E}) \circ \xi(\mathcal{E}) \Rightarrow \id_{F(\mathcal{E})}$ satisfying the unit-counit equations. 
Let $f: \mathcal{E} \rightarrow \mathcal{F}$ be a 1-morphism in $\mathcal{D}$. 
Define $\xi'_f$ to be the following composition: 
\[ \xi'(\mathcal{F}) \circ G(f) \Rightarrow \xi'(\mathcal{F}) \circ G(f) \circ \xi(\mathcal{E}) \circ \xi'(\mathcal{E})  \Leftarrow \xi'(\mathcal{F}) \circ \xi(\mathcal{F}) \circ F(f) \circ \xi'(\mathcal{E}) \Rightarrow  F(f) \circ \xi'(\mathcal{E}). \] 
We leave to reader to check that this defines indeed a pseudo-natural transformation. 
\end{proof}

\begin{DEF}
Recall that an object $\mathcal{E}$ in a strict 2-category defines a strict 2-functor
\begin{eqnarray*} 
\Hom(\mathcal{E},-): \mathcal{D} &\rightarrow &\mathcal{CAT}  \\
\mathcal{F}& \mapsto & \Hom(\mathcal{E},\mathcal{F})
\end{eqnarray*}

A pseudo-functor from a 2-category $\mathcal{D}$
\[ F: \mathcal{D} \rightarrow \mathcal{CAT} \]
is called {\bf representable} if there is an object $\mathcal{E}$ and a pseudo-natural transformation
\[ \nu: F \rightarrow \Hom(\mathcal{E}, -) \]
which is an equivalence, cf.\@ \ref{PARPSEUDONATEQUIVALENCE}.
\end{DEF}

\begin{LEMMA}\label{LEMMAREPREQUIV}
An object $\mathcal{E}$ which represents a functor $F$ is determined up to equivalence.  
\end{LEMMA}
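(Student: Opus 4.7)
The plan is to run the 2-categorical version of the Yoneda argument. Suppose $\mathcal{E}_1$ and $\mathcal{E}_2$ both represent $F$ via pseudo-natural equivalences $\nu_i \colon F \Rightarrow \Hom(\mathcal{E}_i, -)$. Choosing quasi-inverses $\nu_i'$, which exist because each $\nu_i$ is an equivalence, the composites $\eta := \nu_2 \circ \nu_1'$ and $\eta' := \nu_1 \circ \nu_2'$ are quasi-inverse pseudo-natural equivalences between $\Hom(\mathcal{E}_1, -)$ and $\Hom(\mathcal{E}_2, -)$, together with modifications $\eta \circ \eta' \cong \id$ and $\eta' \circ \eta \cong \id$.

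Next I transport identities to produce a candidate equivalence and its inverse: set $f := \eta_{\mathcal{E}_1}(\id_{\mathcal{E}_1}) \in \Hom(\mathcal{E}_2, \mathcal{E}_1)$ and $g := \eta'_{\mathcal{E}_2}(\id_{\mathcal{E}_2}) \in \Hom(\mathcal{E}_1, \mathcal{E}_2)$. The claim is that $f$ and $g$ are quasi-inverse 1-morphisms in $\mathcal{D}$. To see $g \circ f \cong \id_{\mathcal{E}_2}$, I apply the pseudo-naturality square of $\eta$ at the 1-morphism $g \colon \mathcal{E}_1 \to \mathcal{E}_2$ and evaluate it on $\id_{\mathcal{E}_1}$, obtaining a 2-isomorphism $\eta_{\mathcal{E}_2}(g \circ \id_{\mathcal{E}_1}) \cong g \circ \eta_{\mathcal{E}_1}(\id_{\mathcal{E}_1})$, i.e., $\eta_{\mathcal{E}_2}(g) \cong g \circ f$. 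On the other hand, the modification $\eta \circ \eta' \cong \id_{\Hom(\mathcal{E}_2, -)}$, evaluated at $\mathcal{E}_2$ on the object $\id_{\mathcal{E}_2}$, yields $\eta_{\mathcal{E}_2}(g) = \eta_{\mathcal{E}_2}(\eta'_{\mathcal{E}_2}(\id_{\mathcal{E}_2})) \cong \id_{\mathcal{E}_2}$. Concatenating the two isomorphisms gives $g \circ f \cong \id_{\mathcal{E}_2}$, and the symmetric argument (swapping the roles of $\eta, \eta'$ and $f, g$) produces $f \circ g \cong \id_{\mathcal{E}_1}$. Hence $f$ is an equivalence in $\mathcal{D}$, proving $\mathcal{E}_1 \simeq \mathcal{E}_2$.

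The main obstacle is bookkeeping rather than conceptual: one must juggle the coherence data from Definitions~\ref{DEFPSEUDOFUN}--\ref{DEFPSEUDONATTRANS}, in particular the unitor $F_X \colon F(\id_X) \Rightarrow \id_{F(X)}$ and the axiom in Definition~\ref{DEFPSEUDONATTRANS} relating $\alpha_{\id_X}$ to $\alpha(X)$, in order to legitimately replace $g \circ \id_{\mathcal{E}_1}$ by $g$ inside $\eta_{\mathcal{E}_2}$. Since $\Hom(\mathcal{E}_i, -)$ is a \emph{strict} 2-functor, these unitors are identities, so in practice the manipulations collapse and the argument is just the classical Yoneda proof lifted one level. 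A minor sanity check is variance: because $\Hom(\mathcal{E}_i, -)$ is covariant in its argument, a pseudo-natural transformation $\Hom(\mathcal{E}_1, -) \Rightarrow \Hom(\mathcal{E}_2, -)$ is classified by a 1-morphism $\mathcal{E}_2 \to \mathcal{E}_1$, which matches the direction of $f$ above.
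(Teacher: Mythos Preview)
Your proof is correct and follows essentially the same 2-categorical Yoneda argument as the paper: both reduce to a pseudo-natural equivalence between representables, evaluate at identities to produce the candidate 1-morphisms, and then use a pseudo-naturality square together with the modification witnessing $\eta\circ\eta'\cong\id$ to obtain the required 2-isomorphisms. The only cosmetic difference is that the paper applies the pseudo-naturality square of $\xi$ at the morphism $\xi_{\mathcal{E}}(\id_{\mathcal{E}})$ and then passes to the quasi-inverse $\eta$, whereas you apply pseudo-naturality of $\eta$ directly at $g=\eta'_{\mathcal{E}_2}(\id_{\mathcal{E}_2})$; both routes yield the same conclusion.
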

\begin{proof}
We have to show that every pseudo-natural transformation
\[ \xi: \Hom(\mathcal{E}, -)  \rightarrow \Hom(\mathcal{E}', -) \]
which has an inverse up to modification, induces an equivalence $\mathcal{E} \rightarrow \mathcal{E}'$. Let $\eta$ be the quasi-inverse of $\xi$. We have a 2-commutative diagram 
\[ \xymatrix{
\Hom(\mathcal{E}, \mathcal{E}') \ar[r]^{\xi_{\mathcal{E}'}} \ar[d]_{\xi_{\mathcal{E}}(\id_{\mathcal{E}}) \circ -} 
\ar@{}[rd]|{\Downarrow^\sim} & \Hom(\mathcal{E}', \mathcal{E}') \ar[d]^{\xi_{\mathcal{E}}(\id_{\mathcal{E}}) \circ -} \\
\Hom(\mathcal{E}, \mathcal{E}) \ar[r]_{\xi_{\mathcal{E}}} & \Hom(\mathcal{E}', \mathcal{E})
} \]
by the definition of pseudo-natural transformation. Hence also a 2-commutative diagram: 
\[ \xymatrix{
\Hom(\mathcal{E}, \mathcal{E}')\ar[d]_{\xi_{\mathcal{E}}(\id_{\mathcal{E}}) \circ -}  
\ar@{}[rd]|{\Downarrow^\sim} & \Hom(\mathcal{E}', \mathcal{E}') \ar[d]^{\xi_{\mathcal{E}}(\id_{\mathcal{E}})  \circ -}  \ar[l]_{\eta_{\mathcal{E}'}}  \\
\Hom(\mathcal{E}, \mathcal{E})  & \Hom(\mathcal{E}', \mathcal{E}) \ar[l]^{\eta_{\mathcal{E}}}
} \]
In particular, we get 2-isomorphisms
\[ \xi_{\mathcal{E}}(\id_{\mathcal{E}})  \circ \eta_{\mathcal{E}'}(\id_{\mathcal{E}'}) \Rightarrow \eta_{\mathcal{E}}(\xi_{\mathcal{E}}(\id_{\mathcal{E}})) \Rightarrow \id_{\mathcal{E}}    \]
where the second one comes from the fact that $\eta$ and $\xi$ are inverse to each other up to 2-isomorphism. 
Similarly, there is a 2-isomorphism
\[  \eta_{\mathcal{E}'}(\id_{\mathcal{E}'}) \circ  \xi_{\mathcal{E}}(\id_{\mathcal{E}})  \Rightarrow  \id_{\mathcal{E}'}.    \]
Hence we get the required equivalence
\[ \xymatrix{ \mathcal{E} \ar@/^3pt/[rr]^{\eta_{\mathcal{E}'}(\id_{\mathcal{E}'})} & & \ar@/^3pt/[ll]^{ \xi_{\mathcal{E}}(\id_{\mathcal{E}})} \mathcal{E}'  } \]
\end{proof}

The previous lemma shows that the following definition makes sense:
\begin{DEF}\label{DEFPUSHFWD}
\begin{enumerate}
\item Let $p: \mathcal{D} \rightarrow \mathcal{S}$ be a strict functor of 2-multicategories which is a 1-opfibration and 2-isofibration. The target object $\mathcal{F}$ of a coCartesian 1-morphism (cf.\@ Definition~\ref{DEFCOCART}) starting from $\mathcal{E}_1, \dots, \mathcal{E}_n$ and lying over a 1-multimorphism $f \in \Hom(S_1, \dots, S_n; T)$ in $\mathcal{S}$
is denoted by $f_\bullet(\mathcal{E}_1, \dots, \mathcal{E}_n)$.
\item Let $p: \mathcal{D} \rightarrow \mathcal{S}$ be a strict functor of 2-multicategories which is a 1-fibration and 2-isofibration. The $i$-th source object $\mathcal{F}$ of a Cartesian 1-morphism w.r.t.\@ to the $i$-th slot (cf.\@ Definition~\ref{DEFCART}) starting from $\mathcal{E}_1, \overset{\widehat{i}}{\dots}, \mathcal{E}_n$ with target $\mathcal{F}$ and lying over a 1-multimorphism $f \in \Hom(S_1, \dots, S_n; T)$ in $\mathcal{S}$
is denoted by $f^{\bullet,i}(\mathcal{E}_1, \overset{\widehat{i}}{\dots}, \mathcal{E}_n; \mathcal{F})$.
\end{enumerate}
In both cases the objects are uniquely determined up to equivalence in $\mathcal{D}_T$. 
\end{DEF}
Note that for two different objects $f_\bullet(\mathcal{E}_1, \dots, \mathcal{E}_n)$ and $f_\circ(\mathcal{E}_1, \dots, \mathcal{E}_n)$ each representing the 2-functor
\[ \mathcal{F} \mapsto \Hom_f(\mathcal{E}_1, \dots, \mathcal{E}_n; \mathcal{F})  \]
on the 2-category $\mathcal{D}_T$, we get an equivalence $f_\bullet(\mathcal{E}_1, \dots, \mathcal{E}_n) \leftrightarrow f_\circ(\mathcal{E}_1, \dots, \mathcal{E}_n)$ by  Lemma~\ref{LEMMAREPREQUIV}.

\begin{PAR}\label{PARCATMULTI}
The 2-category
$\mathcal{CAT}$ has a natural structure of a symmetric 2-multicategory setting
\[ \Hom(\mathcal{C}_1, \dots, \mathcal{C}_n; \mathcal{D}) :=  \Fun(\mathcal{C}_1 \times \cdots \times \mathcal{C}_n, \mathcal{D}). \]
$\mathcal{CAT}$ is obviously opfibered over $\{\cdot\}$ with the monoidal product given by the product of categories and with the final category as neutral element. 
\end{PAR}

\begin{DEF}[2-categorical Grothendieck construction] \label{GROTHCONSTR}
For a pseudo-functor of 2-multicategories
\[ \Xi: \mathcal{S} \rightarrow \mathcal{CAT} \]
where $\mathcal{CAT}$ is equipped with the structure of 2-multicategory of \ref{PARCATMULTI},
we get a 2-multicategory $\int \Xi$ and a strict functor
\[ \int \Xi \rightarrow \mathcal{S} \]
which is 1-opfibered and 2-fibered and
whose fiber over $S \in \mathcal{S}$ is isomorphic to $\Xi(S)$ (hence it is a 1-category), as follows: 
The objects of  $\int \Xi$ are pairs 
\[ (\mathcal{E}, S) \]
where $S$ is an object of $\mathcal{S}$, and $\mathcal{E}$ is an object of $\Xi(S)$. The 1-morphisms in
\[ \Hom_{\int \Xi}( (\mathcal{E}_1, S_1), \dots, (\mathcal{E}_n, S_n);  (\mathcal{F}, T) ) \]
are pairs $(\alpha, f)$ where $f \in \Hom(S_1, \dots, S_n; T)$ is a 1-morphism in $\mathcal{S}$  and $\alpha: \Xi(f)(\mathcal{E}_1, \dots, \mathcal{E}_n) \rightarrow \mathcal{F}$ is a morphism in $\Xi(T)$. The
2-morphisms
\[ \nu: (\alpha, f) \Rightarrow (\beta, g) \] 
are those 2-morphisms $\nu: f \Rightarrow g$ such that $\beta \circ \Xi(\nu) = \alpha$.

Similarly there is a Grothendieck construction which starts from a pseudo-functor of 2-multicategories 
\[ \Xi: \mathcal{S}^{2-\op} \rightarrow \mathcal{CAT} \]
and produces a 1-opfibration and 2-opfibration.
\end{DEF}

\begin{PAR}\label{GROTHCONSTRDUAL}
There is also a Grothendieck construction which starts from a pseudo-functor of 2-categories (not 2-multicategories)
\[ \Xi: \mathcal{S}^{1-\op} \rightarrow \mathcal{CAT} \]
and produces a 1-fibration and 2-opfibration $\nabla \Xi \rightarrow \mathcal{S}$, or from a pseudo-functor
\[ \Xi: \mathcal{S}^{1-\op, 2-\op} \rightarrow \mathcal{CAT} \]
respectively, and produces a 1-fibration and 2-fibration $\nabla \Xi \rightarrow \mathcal{S}$. 
A 1-fibration of (2-){\em multi}\,categories cannot be so easily described by a pseudo-functor because one gets several pullback functors depending on the slot (e.g.\@ $\mathcal{HOM}_l, \mathcal{HOM}_r$).   
\end{PAR}

\begin{PROP}\label{PROPGROTHCONSTR}
For a strict functor between 2-multicategories $p: \mathcal{D}\rightarrow \mathcal{S}$ which is 1-opfibered and 2-fibered {\em with 1-categorical fibers}, we get an associated
pseudo-functor of 2-multicategores:
\begin{eqnarray*} 
\Xi_{\mathcal{D}}: \mathcal{S}& \rightarrow& \mathcal{CAT}  \\
 S &\mapsto& \mathcal{D}_S 
\end{eqnarray*}
The construction is inverse (up to isomorphism of pseudo-functors, resp.\@ 1-opfibrations/2-fibrations) to the one given in the previous definition. 
\end{PROP}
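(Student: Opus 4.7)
The plan is, for the forward direction, to use the coCartesian-lift construction of Definition~\ref{DEFPUSHFWD}. First I would set $\Xi_{\mathcal{D}}(S) := \mathcal{D}_S$ (replacing by a skeleton if one insists on a strict 1-category), and, for each 1-morphism $f \in \Hom_{\mathcal{S}}(S_1,\dots,S_n;T)$ and each tuple $(\mathcal{E}_1,\dots,\mathcal{E}_n) \in \mathcal{D}_{S_1}\times\cdots\times\mathcal{D}_{S_n}$, choose once and for all a coCartesian lift $\xi_f \in \Hom_f(\mathcal{E}_1,\dots,\mathcal{E}_n;\ f_\bullet(\mathcal{E}_1,\dots,\mathcal{E}_n))$. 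This produces a candidate multi-functor $\Xi_{\mathcal{D}}(f) := f_\bullet$; functoriality in the arguments is forced by the universal property of the coCartesian lifts applied to $\xi_f \circ_i (\id,\dots,\alpha,\dots,\id)$, and the 1-categorical-fiber hypothesis collapses the potential 2-categorical ambiguity to a unique identification, yielding strict functoriality.

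For a 2-morphism $\mu: f \Rightarrow g$ in $\mathcal{S}$ I would use the 2-fibration property to pull $\xi_g$ back along $\mu$ to a 1-morphism $\xi_g^\mu$ over $f$ with fixed target $g_\bullet(\mathcal{E}_\bullet)$, and then apply the weak coCartesianness of $\xi_f$ (granted by Proposition~\ref{LEMMAWEAKLY}) to factor $\xi_g^\mu$ uniquely in the form $\Xi_{\mathcal{D}}(\mu)_{\mathcal{E}_\bullet}\circ \xi_f$ for a morphism $\Xi_{\mathcal{D}}(\mu)_{\mathcal{E}_\bullet}: f_\bullet(\mathcal{E}_\bullet) \to g_\bullet(\mathcal{E}_\bullet)$ in $\mathcal{D}_T$. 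Naturality in $(\mathcal{E}_\bullet)$ and compatibility with both vertical and horizontal composition of 2-morphisms then follow from the uniqueness of this factorization.

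The coherence isomorphisms $F_{g,f}: \Xi_{\mathcal{D}}(g)\circ_i \Xi_{\mathcal{D}}(f) \Rightarrow \Xi_{\mathcal{D}}(g\circ_i f)$ arise from Lemma~\ref{LEMMACOMPOSCART}: the composite $\xi_g \circ_i \xi_f$ is itself a coCartesian lift of $g\circ_i f$, so by Lemma~\ref{LEMMAREPREQUIV} it is canonically isomorphic in $\mathcal{D}_T$ to the chosen lift, uniquely so because the fiber is 1-categorical. The associativity and unit coherence axioms of Definition~\ref{DEFPSEUDOFUN} will then follow from these uniqueness statements alone. The main obstacle I expect is the verification of the full associativity hexagon, but the 1-categorical fiber hypothesis is precisely what reduces this to a sequence of ``both sides are coCartesian lifts of the same 1-morphism and hence agree under the canonical comparison'' assertions, rather than requiring explicit construction of higher-coherence data.

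For the inverse equivalence, given a pseudo-functor $\Xi: \mathcal{S}\to\mathcal{CAT}$, the projection $\int\Xi \to \mathcal{S}$ of Definition~\ref{GROTHCONSTR} is by inspection 1-opfibered and 2-fibered with 1-categorical fibers $\Xi(S)$: the pair $(\id_{\Xi(f)(\mathcal{E}_\bullet)},f)$ is coCartesian over $f$, and $(\beta,g)$ admits the Cartesian lift $(\beta\circ\Xi(\nu),f)$ over any 2-morphism $\nu: f\Rightarrow g$. Applying $\Xi_{(-)}$ to this recovers $\Xi$ up to the canonical isomorphism of pseudo-functors coming from this tautological system of lifts. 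Conversely, starting from $p:\mathcal{D}\to\mathcal{S}$, the comparison $\int\Xi_{\mathcal{D}} \to \mathcal{D}$ that sends an object $((\mathcal{E}_\bullet),S_\bullet)$ to $\mathcal{E}_\bullet$ and a morphism $((\alpha,f))$ to $\alpha\circ\xi_f$ is the identity on fibers, preserves the distinguished coCartesian 1-morphisms and Cartesian 2-morphisms by construction, and is therefore an equivalence of 2-multicategories over $\mathcal{S}$.
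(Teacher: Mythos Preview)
Your proposal is correct and follows essentially the same route as the paper's proof sketch: define $\Xi_{\mathcal{D}}(f)=f_\bullet$ via chosen coCartesian lifts, use the 2-fibration (Cartesian 2-morphisms) to produce $\Xi_{\mathcal{D}}(\mu)$ by pulling back $\xi_g$ and factoring through $\xi_f$, and obtain the coherence data from Lemma~\ref{LEMMACOMPOSCART} together with the 1-categorical-fiber hypothesis. Your treatment of the inverse equivalence is somewhat more explicit than the paper's, but the underlying idea is identical.
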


An analogous proposition is true for 1-(op)fibrations and 2-(op)fibration, with the restriction that for 1-fibrations the multi-aspect has to be neglected. 

\begin{proof}[Proof (Sketch).]
The pseudo-functor $\Xi_{\mathcal{D}}$ maps a 1-morphism $f: S_1, \dots, S_n \rightarrow T$ to the functor (cf.\@ Definition~\ref{DEFPUSHFWD})
\[ f_\bullet(-, \dots, -):  \mathcal{D}_{S_1} \times \dots \times   \mathcal{D}_{S_n} \rightarrow  \mathcal{D}_{T}. \]
A 2-morphism $\nu: f \Rightarrow g$ is mapped to the following natural transformation between $f_\bullet(-, \dots, -)$ and $g_\bullet(-, \dots, -)$.
With the definition (or characterization) of $f_\bullet(-, \dots, -)$ there comes a natural equivalence of {\em discrete} categories
\begin{equation}\label{eqnat} 
\Hom_f(\mathcal{E}_1, \dots, \mathcal{E}_n; \mathcal{F}) \rightarrow \Hom_{\mathcal{D}_T}(f_\bullet(\mathcal{E}_1, \dots, \mathcal{E}_n); \mathcal{F}). 
\end{equation}
Because $p$ is 2-fibered and any 2-isomorphism is Cartesian, $\nu$ induces a well-defined isomorphism
\[ \Hom_f(\mathcal{E}_1, \dots, \mathcal{E}_n; \mathcal{F}) \cong \Hom_g(\mathcal{E}_1, \dots, \mathcal{E}_n; \mathcal{F}). \]
Since this is true for any $\mathcal{F}$, using the natural equivalences (\ref{eqnat}) for $f$ and $g$, we get a morphism in $\mathcal{D}_T$
\[ f_\bullet(\mathcal{E}_1, \dots, \mathcal{E}_n) \rightarrow g_\bullet(\mathcal{E}_1, \dots, \mathcal{E}_n). \]
One checks that this defines a natural transformation and that the whole construction $\Xi$ is indeed a pseudo-functor of 2-multicategories. 
\end{proof}

\begin{KOR}
The concept of functor between 1-multicategories $p: \mathcal{D} \rightarrow \{\cdot\}$ which are (1\nobreakdash-)opfibered is equivalent to the concept of a monoidal category. 
The functor is, in addition, (1-)fibered if the corresponding monoidal category is closed. 
\end{KOR}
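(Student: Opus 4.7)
The plan is to apply Proposition~\ref{PROPGROTHCONSTR} in the special case where the base is the terminal (symmetric) 1-multicategory $\{\cdot\}$, regarded as a 2-multicategory whose only 2-morphisms are identities. Then $\{\cdot\}$ has a single object $*$ and a unique $n$-ary 1-morphism $\mu_n$ for every $n \ge 0$, with the obvious composition. Since all 2-morphisms in the base are identities, the 2-fibered hypothesis of Proposition~\ref{PROPGROTHCONSTR} is automatic for any $p$ with 1-categorical fibers, so the proposition yields a bijection (up to equivalence) between 1-opfibrations $p: \mathcal{D} \to \{\cdot\}$ with 1-categorical fibers and pseudo-functors $\Xi: \{\cdot\} \to \mathcal{CAT}$.

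Next I would unwind what such a $\Xi$ is. It consists of a single category $\mathcal{C}:=\Xi(*)$ together with $n$-ary functors $\otimes_n := \Xi(\mu_n): \mathcal{C}^n \to \mathcal{C}$ for all $n \ge 0$, plus the coherence isomorphisms $F_{g,f}$ and $F_X$ of Definition~\ref{DEFPSEUDOFUN}. Singling out $n=2$ gives the tensor product $\otimes := \otimes_2$, and $n=0$ gives a unit object $\mathbf{1} := \otimes_0 \in \mathcal{C}$. The composition coherence $F_{\mu_2,\mu_2}$ provides the associator $(-\otimes -)\otimes - \cong -\otimes (-\otimes -)$, while the coherences $F_{\mu_2, \mu_0}$ resp.\ $F_{\mu_0,\mu_2}$ provide the left and right unitors, and the pentagon and triangle identities are the instances of the large hexagon of Definition~\ref{DEFPSEUDOFUN} applied to triples $(\mu_2,\mu_2,\mu_2)$ resp.\ $(\mu_2,\mu_2,\mu_0)$. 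By MacLane's coherence theorem, all higher coherences involving the $\otimes_n$ for $n\ge 3$ are forced by and equivalent to these two. Conversely, every monoidal category gives rise to such a pseudo-functor by setting $\otimes_n$ equal to a fixed iterated tensor product and using the associators and unitors as coherence data. In the symmetric case one identifies the $S_n$-action of the symmetric multicategory structure with the symmetry isomorphism.

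For the second assertion, I would show that $p$ is a 1-fibration iff for every $n \ge 1$, every $i \in \{1,\dots,n\}$, and every choice of $X_1, \overset{\widehat{i}}{\dots}, X_n; Y \in \mathcal{C}$, the functor
\[ W \mapsto \Hom_{\mathcal{C}}(X_1 \otimes \cdots \otimes W \otimes \cdots \otimes X_n,\, Y) \]
(with $W$ in slot $i$) is representable; this is precisely the content of Definition~\ref{DEFCART} in this situation, since there is only one morphism in $\{\cdot\}$ of each arity. By iterating the binary case and using Lemma~\ref{LEMMACOMPOSCART}, this reduces to the two cases $n=2$, $i\in\{1,2\}$: the functors $W \mapsto \Hom(W\otimes Y, Z)$ and $W \mapsto \Hom(Y \otimes W, Z)$ must be representable, which is exactly the existence of left and right internal Hom's $\mathcal{HOM}_l(Y,Z)$ and $\mathcal{HOM}_r(Y,Z)$, i.e.\ closedness of the monoidal category.

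The only nontrivial matching, and thus the main obstacle, is verifying that the hexagon coherence in Definition~\ref{DEFPSEUDOFUN} applied to the various compositions of the $\mu_n$'s really does reproduce the pentagon and triangle axioms of a monoidal category (and conversely that these two suffice to extend a given monoidal structure on $\mathcal{C}$ to a coherent pseudo-functor out of $\{\cdot\}$). This is a bookkeeping exercise of the MacLane coherence type which I would only sketch, noting that independence of slots in the 2-multicategory structure corresponds to the functoriality of $\otimes$ in both variables separately.
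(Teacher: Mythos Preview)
Your proposal is correct and is precisely the intended argument: the paper states this corollary immediately after Proposition~\ref{PROPGROTHCONSTR} without proof, so applying that proposition with base $\{\cdot\}$ and unwinding the resulting pseudo-functor $\{\cdot\}\to\mathcal{CAT}$ as a monoidal category is exactly what is meant. Your added detail on the MacLane coherence bookkeeping and the reduction of the closedness condition to the binary case via Lemma~\ref{LEMMACOMPOSCART} is a helpful elaboration rather than a departure.
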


\begin{PAR}
For a strict functor between 2-multicategories $p: \mathcal{D}\rightarrow \mathcal{S}$ which is 1-opfibered and 2-fibered but {\em with arbitrary 2-categorical fibers}, and every
$f \in \Hom_{\mathcal{S}}(S_1, \dots, S_n; T)$ and $\mathcal{E}_1, \dots, \mathcal{E}_n$ we get still an object
\[ f_\bullet(\mathcal{E}_1, \dots, \mathcal{E}_n)  \]
which is well-defined up to equivalence. This defines a certain kind of pseudo-3-functor
\[  \mathcal{S} \rightarrow 2-\mathcal{CAT}. \]
Since this becomes confusing and we will not need it, we will not go into any details of this. 
For example, if $\mathcal{S} = \{ \cdot\}$ then a 2-multicategory $\mathcal{D}$ which  is 1-opfibered and 2-fibered over $\{ \cdot\}$ is the same
as a {\bf monoidal 2-category} in the sense of \cite{KV94, Shu10, Gur06, GPS95}. The (symmetric) prototype here is $\mathcal{CAT}$ with the structure
of 2-multicategory considered above. 
\end{PAR}

\begin{BEISPIEL}\label{EXMULTICATS}
Let $\mathcal{S}$ be a usual category. Then $\mathcal{S}$ may be turned into a symmetric
multicategory by setting 
\[ \Hom(S_1, \dots, S_n; T) := \Hom(S_1; T) \times \cdots \times \Hom(S_n; T). \]
If $\mathcal{S}$ has coproducts, then $\mathcal{S}$ (with this multicategory structure) is opfibered over $\{\cdot\}$.
Let $p: \mathcal{D} \rightarrow \mathcal{S}$ be an opfibered (usual) category. 
Then a multicategory structure on $\mathcal{D}$ which turns $p$ into an opfibration w.r.t.\@ this multicategory structure on  $\mathcal{S}$, is equivalent  to a monoidal structure on the fibers of $p$ such that the push-forwards $f_\bullet$ are monoidal functors and such that the compatibility morphisms between them are morphisms of monoidal functors. This is called a {\em covariant monoidal pseudo-functor} in \cite[(3.6.7)]{LH09}.
\end{BEISPIEL}
\begin{BEISPIEL}\label{EXMULTICATSOP}
Let $\mathcal{S}$ be a usual category.  Then $\mathcal{S}^{\mathrm{op}}$ may be turned into a symmetric 
multicategory (or equivalently $\mathcal{S}$ into a symmetric opmulticategory) by setting 
\[ \Hom(S_1, \dots, S_n; T) := \Hom(T; S_1) \times \cdots \times \Hom(T; S_n). \]
If $\mathcal{S}$ has products then $\mathcal{S}^{\mathrm{op}}$ (with this multicategory structure)  is opfibered over $\{\cdot\}$.
Let $p: \mathcal{D} \rightarrow \mathcal{S}^{\mathrm{op}}$ be an opfibered (usual) category. 
Then a multicategory structure on $\mathcal{D}$ which turns $p$ into an opfibration w.r.t.\@ this multicategory structure on $\mathcal{S}^{\op}$, is equivalent to a monoidal structure on the fibers of $p$ such that the pull-backs $f^*$ (along morphisms in $\mathcal{S}$) are monoidal functors and such that the compatibility morphisms between them are morphisms of monoidal functors. This is called a {\em contravariant monoidal pseudo-functor} in \cite[(3.6.7)]{LH09}.
\end{BEISPIEL}

\begin{LEMMA}\label{LEMMAEQUIVALENCECARTCOCART}
Let $p: \mathcal{D} \rightarrow \mathcal{S}$ be a strict functor of 2-multicategories. Any equivalence in $\mathcal{D}$ is a Cartesian and coCartesian 1-morphism. 
\end{LEMMA}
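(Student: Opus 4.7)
The plan is to reduce the lemma to the following elementary fact about ordinary categories: any strictly commutative square whose two horizontal arrows are equivalences is automatically 2-Cartesian. Granted this, the rest is almost formal.

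First I would set up the equivalence. An equivalence $\xi \colon \mathcal{E} \to \mathcal{F}$ in $\mathcal{D}$ is a 1-ary 1-morphism admitting a quasi-inverse $\eta \colon \mathcal{F} \to \mathcal{E}$ together with 2-isomorphisms $\eta \circ \xi \cong \id_\mathcal{E}$ and $\xi \circ \eta \cong \id_\mathcal{F}$. Since $p$ is a strict 2-functor, $f := p(\xi)$ is an equivalence in $\mathcal{S}$ with quasi-inverse $p(\eta)$. Whiskering these 2-isomorphisms shows that in the diagram of categories appearing in Definition~\ref{DEFCOCART} both horizontal functors $\circ_i \xi$ and $\circ_i f$ are equivalences (with quasi-inverses $\circ_i \eta$ and $\circ_i p(\eta)$, respectively); the diagram commutes strictly because $p$ is strict. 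The same is true, \emph{mutatis mutandis}, for the diagram defining Cartesianness (Definition~\ref{DEFCART}), with $\xi \circ_i$ in place of $\circ_i \xi$.

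It then remains to prove the auxiliary claim: given a strictly commutative square
\[ \xymatrix{
A \ar[r]^T \ar[d]_P & B \ar[d]^Q \\
C \ar[r]^U & D
} \]
with $T, U$ equivalences, the induced functor $A \to B \times_{/D}^\sim C$, $a \mapsto (Ta, Pa, \id_{QTa=UPa})$, is an equivalence. Pick a quasi-inverse $T'$ of $T$. For essential surjectivity, given $(b, c, \nu \colon Qb \xrightarrow{\sim} Uc)$, set $a := T'(b)$; then there is a counit isomorphism $TT'(b) \cong b$, whence $UPa = QTa \cong Qb \xrightarrow{\nu} Uc$, and full faithfulness of $U$ lifts this to a unique isomorphism $Pa \cong c$ in $C$, yielding an isomorphism $(Ta, Pa, \id) \cong (b, c, \nu)$ in the comma category. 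For full faithfulness, any morphism $(u, v) \colon (Ta, Pa, \id) \to (Ta', Pa', \id)$ with $Uv = Qu$ is lifted by full faithfulness of $T$ to a unique $\phi \colon a \to a'$ with $T\phi = u$, and then $U(P\phi) = Q(T\phi) = Qu = Uv$ forces $P\phi = v$ by faithfulness of $U$.

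The main obstacle --- such as it is --- lies entirely in this auxiliary claim about 2-Cartesian squares; everything else is a straightforward unwinding, and the Cartesian case is symmetric to the coCartesian one.
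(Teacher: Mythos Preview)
Your argument is correct and follows exactly the paper's approach: observe that composition with an equivalence (on either side) is an equivalence of hom-categories, so both horizontal functors in the relevant square are equivalences, and such a square is automatically 2-Cartesian. The paper simply asserts this last step, whereas you supply a complete verification of the auxiliary claim; your added detail is sound and nothing is missing.
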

\begin{proof}
An equivalence $\mathcal{F} \rightarrow \mathcal{F'}$ has the property that the composition
\[ \Hom_{\mathcal{D}}(\mathcal{E}_1, \dots, \mathcal{E}_n; \mathcal{F}) \rightarrow \Hom_{\mathcal{D}}(\mathcal{E}_1, \dots, \mathcal{E}_n; \mathcal{F}')  \]
is an equivalence of categories for all objects $\mathcal{E}_1, \dots, \mathcal{E}_n$ of $\mathcal{D}$. We hence get a commutative diagram of categories
\[ \xymatrix{
\Hom_{\mathcal{D}}(\mathcal{E}_1, \dots, \mathcal{E}_n; \mathcal{F}) \ar[r] \ar[d] & \Hom_{\mathcal{D}}(\mathcal{E}_1, \dots, \mathcal{E}_n; \mathcal{F}') \ar[d] \\
\Hom_{\mathcal{S}}(S_1, \dots, S_n; T) \ar[r] & \Hom_{\mathcal{S}}(S_1, \dots, S_n; T')
} \]
where the two horizontal morphisms are equivalences. It is automatically 2-Cartesian. 
\end{proof}

\begin{LEMMA}\label{LEMMA1COCART2ISO}
Let $p: \mathcal{D} \rightarrow \mathcal{S}$ be a strict functor of 2-multicategories. If $\xi \in \Hom_{\mathcal{D}}(\mathcal{E}_1, \dots, \mathcal{E}_n; \mathcal{F})$ is a (co)Cartesian 1-morphism and
$\alpha: \xi \Rightarrow \xi'$ is a 2-isomorphism in $\mathcal{D}$, then $\xi'$ is (co)Cartesian as well.
\end{LEMMA}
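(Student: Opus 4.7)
The plan is to reduce the claim to the observation that \emph{2-Cartesianness of a square of categories is invariant under replacing the horizontal (or vertical) functors by naturally isomorphic ones}. I will spell this out in the coCartesian case; the Cartesian case is verbatim the same, using the diagram of Definition~\ref{DEFCART} instead.

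By Definition~\ref{DEFCOCART}, $\xi$ being coCartesian means that for every index $i$ and every tuple $\mathcal{F}_1,\dots,\mathcal{F}_m,\mathcal{G}$ with $\mathcal{F}_i = \mathcal{F}$ lying over $T_1,\dots,T_m,U$ the square
\[ \xymatrix{
\Hom_{\mathcal{D}}(\mathcal{F}_1, \dots, \mathcal{F}_m; \mathcal{G}) \ar[r]^-{\circ_i \xi} \ar[d] & \Hom_{\mathcal{D}}(\mathcal{F}_1, \dots, \mathcal{E}_1, \dots, \mathcal{E}_n, \dots, \mathcal{F}_m; \mathcal{G}) \ar[d] \\
\Hom_{\mathcal{S}}(T_1, \dots, T_m; U) \ar[r]^-{ \circ_i p(\xi)} & \Hom_{\mathcal{S}}(T_1, \dots, S_1, \dots, S_n, \dots, T_m; U)
} \]
is 2-Cartesian. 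The corresponding square for $\xi'$ differs only in the horizontal arrows: on top one has $\circ_i \xi'$ instead of $\circ_i \xi$, on the bottom one has $\circ_i p(\xi')$ instead of $\circ_i p(\xi)$. The vertical functors are unchanged because they are induced by $p$ and depend only on the objects involved.

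Now, the whiskering operation $\mathrm{id}_{(-)} \ast \alpha$ is a natural isomorphism of functors $\circ_i \xi \cong \circ_i \xi'$, because $\alpha$ is a 2-isomorphism in $\mathcal{D}$. Similarly, $p(\alpha)$ is a 2-isomorphism in $\mathcal{S}$ (the image of an isomorphism under the strict functor $p$), and whiskering with $p(\alpha)$ gives a natural isomorphism $\circ_i p(\xi) \cong \circ_i p(\xi')$. These two natural isomorphisms are compatible with the vertical arrows by functoriality of $p$, so the two squares become 2-commutatively isomorphic.

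The remaining point is that 2-Cartesianness is preserved under such replacements: if two 2-commutative squares of categories have the same vertical functors and their horizontal functors are naturally isomorphic in a way compatible with the vertical ones, then the induced functors into the isocomma category $\mathcal{B} \times_{/\mathcal{D}}^\sim \mathcal{C}$ differ by an equivalence, so one is an equivalence iff the other is. This is the only routine verification needed, and it is immediate from the definition of the isocomma category (precomposing the structural isomorphisms with the natural isomorphisms of the horizontal arrows yields an equivalence of the two comparison categories). Hence the square for $\xi'$ is 2-Cartesian, i.e.\ $\xi'$ is coCartesian. The Cartesian case is identical, using the diagram in Definition~\ref{DEFCART}.
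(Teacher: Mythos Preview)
Your argument is correct and follows exactly the same line as the paper's own proof: both observe that the 2-isomorphism $\alpha$ (and its image $p(\alpha)$) yields natural isomorphisms between the horizontal composition functors in the defining square, so that the 2-Cartesianness of the square for $\xi$ is equivalent to that for $\xi'$. You simply spell out the invariance of 2-Cartesianness under naturally isomorphic horizontal functors a bit more explicitly than the paper does.
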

\begin{proof}
The 2-isomorphism $\alpha$ induces a natural isomorphism between the functor `composition with $\xi$' and the functor `composition with $\xi'$'. 
And $p(\alpha)$ induces a natural isomorphism between the functor `composition with $p(\xi)$' and the functor `composition with $p(\xi')$'. 
Therefore the
diagram expressing the coCartesianity of $\xi$ is 2-Cartesian if and only if the corresponding diagram for $\xi'$ is 2-Cartesian. 
\end{proof}

\begin{PAR}\label{DEFPULLBACK}
Consider 2-multicategories $\mathcal{D}$, $\mathcal{S}$, $\mathcal{S}'$ and a diagram
\[ \xymatrix{
 & \mathcal{D} \ar[d]^p \\
\mathcal{S'} \ar[r]_F & \mathcal{S}
} \]
where $p$ is a strict 2-functor and $F$ is a pseudo-functor. 
We define the {\bf pull-back} of $p$ along $F$ as the following 2-multicategory $F^*\mathcal{D}$:
\begin{enumerate}
\item The objects of $F^*\mathcal{D}$ are pairs of objects $\mathcal{F} \in \mathcal{D}$ and $S \in \mathcal{S}'$ such that $p(\mathcal{F}) = F(S)$.
\item The 1-morphisms $(S_1, \mathcal{F}_1), \dots, (S_n, \mathcal{F}_n) \rightarrow (T, \mathcal{G})$  are pairs consisting of a 1-morphism $\alpha \in \Hom_{\mathcal{D}}(\mathcal{F}_1, \dots, \mathcal{F}_n; \mathcal{G})$ and a 1-morphism $\beta \in \Hom_{\mathcal{S}'}(S_1, \dots, S_n;  T)$ and a 2-isomorphism
\[ \xymatrix{
(p(\mathcal{F}_1), \dots, p(\mathcal{F}_n)) \ar[rr]^-{p(\alpha)} \ar@{=}[d] \ar@{}[rrd]|-{\Downarrow^\gamma} && p(\mathcal{G}) \ar@{=}[d]  \\
(F(S_1), \dots, F(S_n)) \ar[rr]_-{F(\beta)} && F(T) 
} \]

\item The 2-morphisms $(\alpha, \beta, \gamma) \Rightarrow (\alpha', \beta', \gamma')$ are 2-morphisms $\mu: \alpha \Rightarrow \alpha'$ and $\nu: \beta \Rightarrow \beta'$ such that $\gamma' p(\mu) = F(\nu) \gamma$. 
\item Composition for the $\gamma$'s is given by the following pasting (here depicted for 1-ary morphisms):
\[ \xymatrix{
p(\mathcal{F}) \ar[r]^-{p(\alpha_1)} \ar@/^20pt/[rr]^{p(\alpha_2 \alpha_1)} \ar@{=}[d] \ar@{}[rd]|-{\Downarrow^{\gamma_1}} & p(\mathcal{F}') \ar[r]^-{p(\alpha_2)}  \ar@{=}[d] \ar@{}[rd]|-{\Downarrow^{\gamma_2}} & p(\mathcal{F}'') \ar@{=}[d]  \\
F(S) \ar[r]^-{F(\beta_1)} \ar@/_20pt/[rr]_{F(\beta_2 \beta_1)}^{\Downarrow^{F_{\beta_2,\beta_1}}}  & F(S')  \ar[r]^-{F(\beta_2)} & F(S'') 
} \]
Here $F_{\beta_2,\beta_1}$ is the 2-isomorphism given by the pseudo-functoriality of $F$ (cf.\@ Definition~\ref{DEFPSEUDOFUN}). Associativity follows from the axioms of a pseudo-functor. 
\end{enumerate}

We get a commutative diagram of 2-multicategories in which the vertical 2-functors are strict:
\[ \xymatrix{
F^* \mathcal{D} \ar[d]_-{F^*p} \ar[r] & \mathcal{D} \ar[d]^-p \\
\mathcal{S}' \ar[r]_F & \mathcal{S}
} \]
\end{PAR}

\begin{PROP}\label{PROPPULLBACK}
If $p$ is a 1-fibration (resp.\@ 1-opfibration, resp.\@ 2-fibration, resp.\@ 2-opfibration) then $F^*p$ is a 1-fibration (resp.\@ 1-opfibration, resp.\@ 2-fibration, resp.\@ 2-opfibration). 
\end{PROP}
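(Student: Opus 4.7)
The plan is to lift (co)Cartesian 1- and 2-morphisms in $F^*\mathcal{D}$ directly from those of $\mathcal{D}$, exploiting the fact that the triple data defining a morphism of $F^*\mathcal{D}$ can always be arranged so that the first component is the (co)Cartesian morphism in $\mathcal{D}$, the second is the given base datum in $\mathcal{S}'$, and the $\gamma$-component is an identity.

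For the 1-opfibration statement, given $\beta \in \Hom_{\mathcal{S}'}(S_1,\dots,S_n;T)$ and objects $(S_i,\mathcal{F}_i)$ of $F^*\mathcal{D}$, I would apply the 1-opfibration property of $p$ to $F(\beta)$ and $\mathcal{F}_1,\dots,\mathcal{F}_n$ to obtain a coCartesian morphism $\alpha \in \Hom_{F(\beta)}(\mathcal{F}_1,\dots,\mathcal{F}_n;\mathcal{G})$, and propose the triple $(\alpha,\beta,\id_{F(\beta)})$ as the coCartesian lift. To verify the 2-Cartesian square of Definition~\ref{DEFCOCART}, I would argue that it decomposes componentwise: the $\alpha$-component is precisely the 2-Cartesian square witnessing coCartesianity of $\alpha$ in $\mathcal{D}$, the $\beta$-component is the tautological square in $\mathcal{S}'$, and the $\gamma$-component is freely reconstructible from the $\alpha$- and $\beta$-data via the pseudo-functoriality isomorphisms of $F$. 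The 1-fibration case is dual.

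For the 2-opfibration statement, given a 1-morphism $(\alpha,\beta,\gamma)$ and a base 2-morphism $\nu\colon \beta\Rightarrow \beta'$ in $\mathcal{S}'$, I would use the 2-opfibration property of $p$ applied to the 1-morphism $\alpha$ and to the 2-morphism $F(\nu)\circ \gamma\colon p(\alpha)\Rightarrow F(\beta')$ to produce an opCartesian 2-morphism $\mu\colon \alpha\Rightarrow\alpha'$ in $\mathcal{D}$ with $p(\alpha')=F(\beta')$ and $p(\mu)=F(\nu)\circ\gamma$. The candidate opCartesian 2-morphism in $F^*\mathcal{D}$ is then $(\mu,\nu)\colon (\alpha,\beta,\gamma)\Rightarrow (\alpha',\beta',\id_{F(\beta')})$, whose defining compatibility $\id\cdot p(\mu)=F(\nu)\circ \gamma$ holds by construction. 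The opCartesianity of $(\mu,\nu)$ reduces to that of $\mu$: any competing $(\mu',\nu'\nu)\colon (\alpha,\beta,\gamma)\Rightarrow (\alpha'',\beta'',\gamma'')$ satisfies $p(\mu')=((\gamma'')^{-1}\circ F(\nu'))\circ p(\mu)$ by the defining compatibility, so the opCartesianity of $\mu$ in $\mathcal{D}$ produces the unique factoring $\mu''$, and the equation $p(\mu'')=(\gamma'')^{-1}F(\nu')$ is precisely what is needed to assemble $(\mu'',\nu')$ into a 2-morphism of $F^*\mathcal{D}$. Compatibility of composition with opCartesian 2-morphisms is componentwise and hence inherited from $\mathcal{D}$. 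The 2-fibration case is dual.

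The principal obstacle is the bookkeeping of $\gamma$-components and of pseudo-functoriality 2-isomorphisms $F_{\beta_2,\beta_1}$ that appear under composition of 1-morphisms in $F^*\mathcal{D}$. These only modify $\gamma$-components by canonical 2-isomorphisms that do not interfere with the (co)Cartesianity property, so although writing out the 2-Cartesian squares requires some care, the argument is ultimately formal.
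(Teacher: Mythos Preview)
Your proposal is correct and follows essentially the same route as the paper. The paper organizes the argument around the single observation that, by definition of the pull-back, the square of Hom-categories
\[
\xymatrix{
\Hom_{F^*\mathcal{D}}((S_1,\mathcal{F}_1),\dots;(T,\mathcal{G})) \ar[r]\ar[d] & \Hom_{\mathcal{D}}(\mathcal{F}_1,\dots;\mathcal{G}) \ar[d]\\
\Hom_{\mathcal{S}'}(S_1,\dots;T) \ar[r]^-F & \Hom_{\mathcal{S}}(F(S_1),\dots;F(T))
}
\]
is 2-Cartesian; your ``componentwise decomposition'' and ``$\gamma$ is freely reconstructible'' are exactly this fact. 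From it the 2-(op)fibration statement is immediate (pull-back of an (op)fibration of 1-categories), replacing your explicit element chase, and the 1-(op)fibration statement is obtained by pasting this 2-Cartesian square on the left and right faces of a cube whose back face is the coCartesianity square for $\xi$ in $\mathcal{D}$, yielding the coCartesianity square for $(\xi,f)$ in $F^*\mathcal{D}$ on the front face---which is precisely what your informal ``decomposes componentwise'' amounts to.
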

\begin{proof}
We show the proposition for 1-opfibrations and 2-opfibrations. The other assertions are shown similarly.
Consider the diagram 
\[ \xymatrix{
\Hom_{F^*\mathcal{D}}((S_1,\mathcal{F}_1), \dots, (S_m,\mathcal{F}_m); (T,\mathcal{G})) \ar[r] \ar[d]_{F^*p} & \Hom_{\mathcal{D}}(\mathcal{F}_1, \dots, \mathcal{F}_m; \mathcal{G}) \ar[d]^p \\
\Hom_{\mathcal{S}'}(S_1, \dots, S_m; T) \ar[r]^-F & \Hom_{\mathcal{S}}(F(S_1), \dots, F(S_m); F(T))
} \]
where $S_1, \dots, S_m, T$ are objects of $\mathcal{S}'$ and $\mathcal{F}_1, \dots, \mathcal{F}_m, \mathcal{G}$ are objects of $\mathcal{D}$ such that $F(S_i) = p(\mathcal{F}_i)$ and $F(T) = p(\mathcal{G})$. By definition of pull-back this diagram is 2-Cartesian. 

Hence if $p$ is an opfibration then so is $F^*p$. Furthermore a 2-morphism in $F^*\mathcal{D}$, i.e.\@ a morphism in the category $\Hom_{F^*\mathcal{D}}((T_1,\mathcal{F}_1), \dots, (T_m,\mathcal{F}_m); (U,\mathcal{G}))$
is coCartesian for $F^*p$ if and only if its image in $\Hom_{\mathcal{D}}(\mathcal{F}_1, \dots, \mathcal{F}_m; \mathcal{G})$ is coCartesian for $p$. 

Let $f \in \Hom(S_1, \dots, S_n; T_i)$ be a 1-morphism in $\mathcal{S}'$ and $\mathcal{E}_1, \dots, \mathcal{E}_n$ be objects of $\mathcal{D}$ such that $F(S_i) = p(\mathcal{E}_i)$.
Choose a coCartesian 1-morphism $\xi \in \Hom_{\mathcal{D}}(\mathcal{E}_1, \dots, \mathcal{E}_n; \mathcal{F}_i)$ over $F(f)$ and consider the corresponding morphism
\[ (\xi,f) \in \Hom_{F^*\mathcal{D}}((S_1,\mathcal{E}_1), \dots, (S_m,\mathcal{E}_m); (T_i,\mathcal{F}_i)) \]
over $f$. We will show that the 1-morphism $(\xi,f)$ is coCartesian for $F^*\mathcal{D} \rightarrow \mathcal{S}'$.
Consider the following 2-commutative diagram of categories (we omitted the natural isomorphisms which occur in the left, right, bottom and top faces): 
{\tiny
\[ \xymatrix{
&\Hom_{\mathcal{D}}\left({\begin{array}{c}\mathcal{F}_1\\ \dots\\ \mathcal{F}_m\end{array}}; \mathcal{G}\right) \ar[rr]^-{\circ_i \xi} \ar[dd] & & \Hom_{\mathcal{D}}\left({\begin{array}{c}\mathcal{F}_1\\ \dots\\ \mathcal{F}_{i-1}\\ \mathcal{E}_1\\ \dots\\ \mathcal{E}_n\\ \mathcal{F}_{i+1}\\ \dots\\ \mathcal{F}_m\end{array}}; \mathcal{G}\right) \ar[dd] \\
\Hom_{F^*\mathcal{D}}\left({\begin{array}{c}(T_1,\mathcal{F}_1)\\ \dots\\ (T_m,\mathcal{F}_m)\end{array}}; (U,\mathcal{G})\right) \ar[ru] \ar[dd] \ar[rr]^-(.3){\circ_i (\xi,f)} &&\Hom_{F^*\mathcal{D}}\left({\begin{array}{c}(T_1,\mathcal{F}_1)\\ \dots\\ (T_{i-1}, \mathcal{F}_{i-1})\\ (S_1,\mathcal{E}_1)\\ \dots\\ (S_n, \mathcal{E}_n)\\ (T_{i+1}, \mathcal{F}_{i+1})\\ \dots\\ (T_m, \mathcal{F}_m)\end{array}}; (U, \mathcal{G})\right) \ar[dd] \ar[ru]   \\
&\Hom_{\mathcal{S}}\left({\begin{array}{c}F(T_1)\\ \dots\\ F(T_m)\end{array}}; F(U)\right) \ar[rr]^-(.3){ \circ_i F(f)} & & \Hom_{\mathcal{S}}\left({\begin{array}{c}F(T_1)\\ \dots\\ F(T_{i-1})\\ F(S_1)\\ \dots\\ F(S_n)\\ F(T_{i+1})\\ \dots\\ F(T_m)\end{array}}; F(U)\right) \\
\Hom_{\mathcal{S}'}\left({\begin{array}{c}T_1\\ \dots\\ T_m\end{array}}; U\right) \ar[ru]  \ar[rr]^-{ \circ_i f} && \Hom_{\mathcal{S}'}\left({\begin{array}{c}T_1\\ \dots\\ T_{i-1}\\ S_1\\ \dots\\ S_n\\ T_{i+1}\\ \dots\\ T_m\end{array}}; U\right) \ar[ru] 
} \]}

The back face of the cube is 2-Cartesian by the definition of coCartesian for $\xi$. The left and right face of the cube are 2-Cartesian by the definition of pull-back. Therefore also the front face is 2-Cartesian, and hence $(\xi,f)$ is a Cartesian 1-morphism. 

Furthermore, for the composition with any (not necessarily coCartesian) 1-morphism we may draw a similar diagram and have to show that if the top horizonal functor in the back face is a morphism of opfibrations then the front face is a morphism of opfibrations. This follows from the characterization of coCartesian 2-morphisms given in the beginning of the proof. 
\end{proof}

\begin{PROP}\label{PROPBIFIBTRANSITIVITY}
If $p_1: \mathcal{E} \rightarrow \mathcal{D}$ and $p_2: \mathcal{D} \rightarrow \mathcal{S}$ are 1-fibrations (resp.\@ 1-opfibrations, resp.\@ 2-fibrations, resp.\@ 2-opfibrations) 
of 2-multicategories then the composition
$p_2 \circ p_1: \mathcal{E} \rightarrow \mathcal{S}$ is a 1-fibration, (resp.\@ 1-opfibration, resp.\@ 2-fibration, resp.\@ 2-opfibration) of 2-multicategories.
An $i$-morphism $\xi$ is (co)Cartesian w.r.t.\@ $p_2 \circ p_1$ if and only if it is $i$-(co)Cartesian w.r.t.\@ 
$p_1$ and $p_1(\xi)$ is $i$-(co)Cartesian w.r.t.\@ $p_2$.
\end{PROP}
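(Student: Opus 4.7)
The plan is to prove transitivity and the characterization of (co)Cartesian morphisms simultaneously, reducing the 1-(op)fibration case to pasting of 2-Cartesian squares plus uniqueness of coCartesian lifts, and the 2-(op)fibration case to the classical transitivity result for ordinary (op)fibrations of categories. I will spell out the 1-opfibration case; the 1-fibration case is entirely analogous once one is careful about which slot is used for composition, as in Lemma \ref{LEMMACOMPOSCART}.

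First I would establish the ``if'' direction of the characterization, which simultaneously yields existence of coCartesian lifts for $p_2 \circ p_1$. Given $f \in \Hom_{\mathcal{S}}(S_1,\dots,S_n;T)$ and objects $\mathcal{E}_i \in \mathcal{E}$ above $S_i$, apply the 1-opfibration property of $p_2$ to obtain a coCartesian lift $\widetilde{f}$ of $f$ in $\mathcal{D}$ starting from $p_1(\mathcal{E}_i)$, then apply the 1-opfibration property of $p_1$ to obtain a coCartesian lift $\widetilde{\widetilde{f}}$ of $\widetilde{f}$ in $\mathcal{E}$ starting from $\mathcal{E}_i$. The 2-Cartesian square of Hom-categories witnessing coCartesianity of $\widetilde{\widetilde{f}}$ with respect to $p_1$ sits on top of the 2-Cartesian square witnessing coCartesianity of $\widetilde{f}$ with respect to $p_2$; by the standard pasting lemma for 2-Cartesian squares the outer rectangle is again 2-Cartesian, which is exactly the condition that $\widetilde{\widetilde{f}}$ is coCartesian with respect to $p_2 \circ p_1$.

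For the ``only if'' direction, let $\xi$ be coCartesian for $p_2 \circ p_1$ over $f$, with target $\mathcal{F}$, and construct $\widetilde{\widetilde{f}}$ as above with target $\mathcal{F}'$. Since both are coCartesian lifts over $f$ from the same source, the representability statement of Lemma \ref{LEMMAREPREQUIV} applied to the 2-functor $\mathcal{G} \mapsto \Hom_f(\mathcal{E}_1,\dots,\mathcal{E}_n;\mathcal{G})$ on the fiber $\mathcal{E}_T$ of $p_2 \circ p_1$ (both $\mathcal{F}$ and $\mathcal{F}'$ represent it, by the fiber of the defining 2-Cartesian square over $\id_T$) yields an equivalence $\epsilon \colon \mathcal{F}' \to \mathcal{F}$ in $\mathcal{E}_T$ with a 2-isomorphism $\xi \cong \epsilon \circ \widetilde{\widetilde{f}}$. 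Since $p_1$ is a strict 2-functor it preserves equivalences, so $p_1(\epsilon)$ is an equivalence in $\mathcal{D}$ over $\id_T$, hence both Cartesian and coCartesian for $p_2$ by Lemma \ref{LEMMAEQUIVALENCECARTCOCART}. Therefore $p_1(\xi) \cong p_1(\epsilon) \circ \widetilde{f}$ is a composition of coCartesian morphisms for $p_2$, hence coCartesian by Lemma \ref{LEMMACOMPOSCART}, and so $p_1(\xi)$ itself is coCartesian by Lemma \ref{LEMMA1COCART2ISO}. The same argument in $\mathcal{E}$, using that $\epsilon$ is an equivalence and hence coCartesian for $p_1$, and that $\widetilde{\widetilde{f}}$ is coCartesian for $p_1$ by construction, shows that $\xi$ is coCartesian for $p_1$.

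For the 2-(op)fibration statement, one applies the classical fact that the composition of two (op)fibrations of ordinary categories is an (op)fibration, and that an arrow is (co)Cartesian for the composition iff it is (co)Cartesian above and its image is (co)Cartesian below, slot-by-slot to the Hom-category projections. The preservation of (co)Cartesian 2-morphisms under composition in $\mathcal{E}$ follows because the composition functors in $\mathcal{E}$ lie above those in $\mathcal{D}$, which by hypothesis already preserve (co)Cartesian 2-morphisms. The main obstacle is the ``only if'' direction for 1-(op)fibrations, since one cannot simply reverse the pasting of 2-Cartesian squares; the key ingredients that make the argument go through are that strict 2-functors preserve equivalences (so $p_1(\epsilon)$ is again an equivalence) and that equivalences are automatically both Cartesian and coCartesian, which together allow the comparison between $\xi$ and the explicitly stacked lift $\epsilon \circ \widetilde{\widetilde{f}}$.
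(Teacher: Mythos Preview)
Your proof is correct and follows essentially the same route as the paper's: the ``if'' direction and existence of coCartesian lifts come from stacking the two 2-Cartesian Hom-squares, and the ``only if'' direction is obtained by comparing an arbitrary coCartesian lift to the explicitly constructed one via Lemma~\ref{LEMMAREPREQUIV}, then invoking Lemmas~\ref{LEMMAEQUIVALENCECARTCOCART}, \ref{LEMMACOMPOSCART} and \ref{LEMMA1COCART2ISO}. Your treatment of the 2-(op)fibration case (reducing to the classical 1-categorical transitivity on Hom-categories and checking that the composition functors preserve the relevant (co)Cartesian 2-morphisms) is in fact slightly more explicit than the paper, which focuses on the 1-opfibration case and leaves the 2-level statement implicit.
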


\begin{proof}
Let $\xi \in \Hom_{\mathcal{E}}(\Sigma_1, \dots, \Sigma_n; \Xi_i)$ be a 1-morphism which is coCartesian for $p_1$ and such that $p_1(\xi)$ is coCartesian for $p_2$. 
Then we have the following diagram
\begin{equation}\label{diatrans}
\vcenter{ \xymatrix{
\Hom_{\mathcal{E}}(\Xi_1, \dots, \Xi_m; \Pi) \ar[rr]^-{\circ_i \xi} \ar[d] && \Hom_{\mathcal{E}}(\Xi_1, \dots, \Xi_{i-1}, \Sigma_1, \dots, \Sigma_n, \Xi_{i+1}, \dots, \Xi_m; \Pi) \ar[d] \\
\Hom_{\mathcal{D}}(\mathcal{F}_1, \dots, \mathcal{F}_m; \mathcal{G}) \ar[rr]^-{\circ_i p_1(\xi)} \ar[d] && \Hom_{\mathcal{D}}(\mathcal{F}_1, \dots, \mathcal{F}_{i-1}, \mathcal{E}_1, \dots, \mathcal{E}_n, \mathcal{F}_{i+1}, \dots, \mathcal{F}_m; \mathcal{G}) \ar[d] \\
\Hom_{\mathcal{S}}(T_1, \dots, T_m; U) \ar[rr]^-{ \circ_i p_2(p_1(\xi))} && \Hom_{\mathcal{S}}(T_1, \dots, T_{i-1}, S_1, \dots, S_n, T_{i+1}, \dots, T_m; U)
} }
\end{equation}
in which both small squares commute and are 2-Cartesian. Hence also the composite square is 2-Cartesian, that is, $\xi$ is coCartesian for $p_2 \circ p_1$.

Let $f \in \Hom_{\mathcal{S}}(S_1, \dots, S_n; T)$ be a 1-morphism and $\Sigma_1, \dots, \Sigma_n$ be objects of $\mathcal{E}$ over $S_1, \dots, S_n$. Choose a coCartesian 1-morphism 
$\mu \in \Hom_{\mathcal{D}}(p_1(\Sigma_1), \dots, p_1(\Sigma_n); \mathcal{F}_i)$ in $\mathcal{D}$ over $f$. Choose a coCartesian 1-morphism (for $p_1$)  
$\xi \in \Hom_{\mathcal{E}}(\Sigma_1, \dots, \Sigma_n; \Xi_i)$ over $\mu$. We have seen before that $\xi$ is coCartesian for $p_2 \circ p_1$ as well. 

Let $\xi' \in \Hom_{\mathcal{E}}(\Sigma_1, \dots, \Sigma_n; \Xi_i')$ be a different coCartesian 1-morphism for $p_2 \circ p_1$ over $f$. 
We still have to prove the implication that $\xi'$ is coCartesian for $p_1$ and that $p_1(\xi')$ is coCartesian for $p_2$.

By Lemma~\ref{LEMMAREPREQUIV} there is an equivalence $\alpha: \Xi_i' \rightarrow \Xi_i$ such that $\xi'$ is isomorphic to $\alpha \circ \xi$. Then $p_1(\xi')$ is isomorphic to $p_1(\alpha) \circ \mu$. 
The 1-morphism $\alpha \circ \xi$ is coCartesian for $p_1$, being a composition of
coCartesian 1-morphisms for $p_1$  (cf.\@ Lemma~\ref{LEMMACOMPOSCART} and Lemma~\ref{LEMMAEQUIVALENCECARTCOCART}). Therefore, by Lemma~\ref{LEMMA1COCART2ISO}, also $\xi'$ is coCartesian for $p_1$, and hence
$p_1(\alpha) \circ \mu$ is a composition of coCartesian morphisms for $p_2$. Therefore, by Lemma~\ref{LEMMA1COCART2ISO}, also $p_1(\xi')$ is coCartesian for $p_2$. 
\end{proof}

There is a certain converse to the previous proposition: 
\begin{PROP}\label{PROPBIFIBTRANSITIVITYCONVERSE}
Let $p_1: \mathcal{E} \rightarrow \mathcal{D}$ and $p_2: \mathcal{D} \rightarrow \mathcal{S}$ be 2-isofibrations of 2-multicategories.
Then $p_1: \mathcal{E} \rightarrow \mathcal{D}$ is a 1-fibration (resp.\@ 1-opfibration), if the following conditions hold: 
\begin{enumerate}
\item $p_2 \circ p_1$ is a 1-fibration (resp.\@ 1-opfibration);
\item $p_1$ maps (co)Cartesian 1-morphisms w.r.t.\@ $p_2 \circ p_1$ to (co)Cartesian 1-morphisms w.r.t.\@ $p_2$;
\item $p_1$ induces a 1-fibration (resp.\@ 1-opfibration) between fibers\footnote{Note that these fibers are usual 2-categories, not 2-multicategories.} $\mathcal{E}_S \rightarrow \mathcal{D}_S$ for any $S \in \mathcal{S}$ and (co)Cartesianity of 1-morphisms in the fibers of $p_1$ is stable under pull-back (resp.\@ push-forward) w.r.t.\@ $p_2 \circ p_1$.

More precisely (here for the opfibered case, the other case is similar): For a morphism $f \in \Hom(S_1, \dots, S_n; T)$, for objects $\mathcal{E}_i$  over $S_i$, and morphisms
$\tau_i: \mathcal{E}_i \rightarrow \mathcal{F}_i$ over $\id_{S_i}$, consider a diagram in $\mathcal{D}$
\[ \xymatrix{
\mathcal{E}_1, \dots, \mathcal{E}_n \ar[rr]^{(\tau_1, \dots, \tau_n)} \ar[d]_{\xi} \ar@{}[drr]|{\Swarrow^\sim}  && \mathcal{F}_1, \dots, \mathcal{F}_n \ar[d]^{\xi'}  \\
\mathcal{G} \ar[rr] && \mathcal{H}
} \]
over the diagram in $\mathcal{S}$
\[ \xymatrix{
S_1, \dots, S_n \ar@{=}[r] \ar[d]_f  & S_1, \dots, S_n \ar[d]^f  \\
T \ar@{=}[r] & T 
} \]
where $\xi$ and $\xi'$ are coCartesian 1-morphisms (in particular the 1-morphism $\mathcal{G} \rightarrow \mathcal{H}$ is uniquely determined up to 2-isomorphism).
Given a diagram in $\mathcal{E}$ 
\[ \xymatrix{
\Xi_1, \dots, \Xi_n \ar[rr]^{(\mu_1, \dots, \mu_n)} \ar[d]_{\kappa} \ar@{}[drr]|{\Swarrow^\sim}  && \Phi_1, \dots, \Phi_n \ar[d]^{\kappa'}  \\
\Pi \ar[rr]^\nu && \Sigma
} \]
over the other two, the following holds true: If $\kappa$ and $\kappa'$ are coCartesian 1-morphisms w.r.t.\@ $p_2 \circ p_1$ and if $\mu_1, \dots, \mu_n$ are coCartesian 1-morphisms w.r.t.\@ $p_1$ (restriction to the respective fiber) then
also $\nu$ is a coCartesian 1-morphism w.r.t.\@ $p_1$ (restriction to the fiber over $T$). 
\end{enumerate}
\end{PROP}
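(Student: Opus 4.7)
The plan is, for every $1$-morphism $f: \mathcal{F}_1, \dots, \mathcal{F}_n \to \mathcal{G}$ in $\mathcal{D}$ and every tuple of objects $\Xi_i \in \mathcal{E}$ with $p_1(\Xi_i) = \mathcal{F}_i$, to exhibit a coCartesian lift of $f$ for $p_1$; the $1$-fibration case is dual. \textbf{Construction.} By hypothesis~(1) choose a coCartesian lift $\kappa: \Xi_1, \dots, \Xi_n \to \Pi$ of $p_2(f)$ for $p_2 \circ p_1$. By hypothesis~(2), $p_1(\kappa)$ is coCartesian for $p_2$ over $p_2(f)$, so the universal property of $p_1(\kappa)$ yields (uniquely up to $2$-isomorphism) a vertical morphism $\sigma: p_1(\Pi) \to \mathcal{G}$ in the fiber $\mathcal{D}_{p_2(\mathcal{G})}$ with $\sigma \circ p_1(\kappa) \cong f$. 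By hypothesis~(3) the fiberwise functor $p_1|_{\mathcal{E}_{p_2(\mathcal{G})}}$ is a $1$-opfibration, so $\sigma$ admits a fiberwise coCartesian lift $\tau: \Pi \to \Sigma$. Set $\nu := \tau \circ \kappa$; after adjusting by a $2$-isomorphism (using that $p_1$ is a $2$-isofibration together with Lemma~\ref{LEMMA1COCART2ISO}) one may assume $p_1(\nu) = f$ strictly.

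\textbf{Verification.} We show that both $\kappa$ and $\tau$ are coCartesian for $p_1$; then $\nu = \tau \circ \kappa$ is coCartesian for $p_1$ by Lemma~\ref{LEMMACOMPOSCART}. That $\kappa$ is coCartesian for $p_1$ follows from the pasting law for $2$-Cartesian squares: the $\mathrm{Hom}$-diagram expressing ``$\kappa$ coCartesian for $p_2 \circ p_1$'' decomposes vertically as the diagram expressing ``$\kappa$ coCartesian for $p_1$'' (on top) pasted with the diagram expressing ``$p_1(\kappa)$ coCartesian for $p_2$'' (on bottom); since the outer square is $2$-Cartesian by construction and the bottom square is $2$-Cartesian by hypothesis~(2), the top square is $2$-Cartesian too. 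The coCartesianness of $\tau$ for $p_1$ is verified directly from the defining diagram: given test data $\rho: \Pi, \Xi'_1, \dots, \Xi'_m \to \Pi'$ in $\mathcal{E}$ and $h$ in $\mathcal{D}$ with $h \circ_1 \sigma \cong p_1(\rho)$, lift $(p_2 \circ p_1)(\rho)$ via hypothesis~(1) to coCartesian morphisms $\kappa'$ starting from $(\Pi, \Xi'_1, \dots)$ and $\kappa''$ starting from $(\Sigma, \Xi'_1, \dots)$. The stability clause of hypothesis~(3), applied to the source vertical morphisms $(\tau, \id, \dots, \id)$, then supplies a fiberwise coCartesian $\tau^{\flat}: \Pi^{\flat} \to \Sigma^{\flat}$ in the target fiber satisfying $\tau^{\flat} \circ \kappa' \cong \kappa'' \circ (\tau, \id, \dots, \id)$. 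Factor $\rho = \bar{\rho} \circ \kappa'$ using coCartesianness of $\kappa'$, then $\bar{\rho} = \bar{\rho}^{\sharp} \circ \tau^{\flat}$ using fiberwise coCartesianness of $\tau^{\flat}$, and set $\rho^{\sharp} := \bar{\rho}^{\sharp} \circ \kappa''$; the identity $\rho^{\sharp} \circ_1 \tau = \rho$ follows from the commuting square, while the uniqueness of each of the three factorizations yields the required fully-faithfulness of the comparison functor.

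\textbf{Main obstacle.} The central difficulty is passing from \emph{fiberwise} coCartesianness of $\tau$ to \emph{global} coCartesianness for $p_1$. Since $p_1$ is assumed only to be a $2$-isofibration, Proposition~\ref{LEMMAWEAKLY} is unavailable to upgrade weakly coCartesian to coCartesian morphisms, so the stability clause of hypothesis~(3) must carry the entire weight of this step: it is precisely what lets the fiberwise universal property of $\tau$ be transported across pushforwards along $p_2 \circ p_1$, thereby accommodating test morphisms $\rho$ whose image in $\mathcal{S}$ is non-trivial.
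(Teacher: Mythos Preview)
Your proof is correct and follows essentially the same approach as the paper's: construct the candidate coCartesian lift as a composition $\nu = \tau \circ \kappa$ where $\kappa$ is coCartesian for $p_2 \circ p_1$ and $\tau$ is fiberwise coCartesian, then verify that each factor is coCartesian for $p_1$ (the first via the pasting law for 2-Cartesian squares, the second via the stability clause of hypothesis~(3) to transport the fiberwise universal property along $p_2 \circ p_1$-pushforwards). The paper argues the second step slightly differently---it phrases the reduction as a ``point-wise equivalence'' of the relevant $\Hom$-square with its fiber-over-$U$ analogue rather than as an explicit three-step factorization---but the content is identical; your version is in fact a bit more explicit and handles the multi-aspect, which the paper suppresses for notational convenience.
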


\begin{proof}
We have to show that coCartesian 1-morphisms w.r.t.\@ $p_1$ exist. To ease notation we will neglect the multi-aspect.

Let $\tau: \mathcal{E} \rightarrow \mathcal{F}$ be a 1-morphism over $f: S \rightarrow T$ and let $\Xi$ be an object over $\mathcal{E}$. 
Choose a coCartesian 1-morphism $\xi: \Xi \rightarrow \Xi'$ over $f$ w.r.t.\@ $p_2 \circ p_1$ which exists by property 1. By property 2.\@ we have that
$p_1(\xi): \mathcal{E} \rightarrow \mathcal{E}'$ is a coCartesian 1-morphism over $f$ w.r.t.\@ $p_2$. We therefore have an induced 1-morphism $\widetilde{\tau}: \mathcal{E}' \rightarrow \mathcal{F}$ over $\id_T$ and a 2-isomorphism
\[ \eta: \widetilde{\tau} \circ  p_1(\xi) \Rightarrow \tau. \]
Now choose a coCartesian 1-morphism $\xi': \Xi' \rightarrow \Xi''$ w.r.t.\@ $p_{1,T}: \mathcal{E}_T \rightarrow \mathcal{D}_T$  over $\widetilde{\tau}$. 
We claim that 
\[ \eta_*(\xi' \circ \xi): \Xi \rightarrow \Xi'' \]
is a coCartesian 1-morphism over $\tau$. Using Lemma~\ref{LEMMA1COCART2ISO} this is equivalent to $\xi' \circ \xi$ being a coCartesian 1-morphism over $\widetilde{\tau} \circ p_1(\xi)$. 
Using diagram (\ref{diatrans}) from the proof of the previous proposition we see that $\xi$ is a coCartesian 1-morphism for $p_1$ as well. Since the composition of coCartesian 
1-morphisms is coCartesian we are left to show that $\xi'$  is coCartesian for $p_1$.  
Let $f: T \rightarrow U$ be a morphism in $\mathcal{S}$ and $\Sigma$ an object over $\mathcal{G}$ over $U$. We have to show that
\begin{equation} \label{diaPROPBIFIBTRANSITIVITYCONVERSE}
\vcenter{ \xymatrix{
\Hom_{\mathcal{E},f}(\Xi'', \Sigma) \ar[r]^{\circ \xi' } \ar[d] &  \Hom_{\mathcal{E},f}(\Xi', \Sigma) \ar[d] \\
\Hom_{\mathcal{D},f}(\mathcal{F}, \mathcal{G}) \ar[r]^{\circ \widetilde{\tau} } & \Hom_{\mathcal{D},f}(\mathcal{E}', \mathcal{G})
} } \end{equation}
is 2-Cartesian (or Cartesian, which amounts to the same). We can form a 2-commutative diagram
\[ \xymatrix{
  \Xi' \ar[r]^{\xi'}  \ar[d] \ar@{}[dr]|{\Swarrow^\sim} & \Xi''  \ar[rd]_{\Swarrow^\sim}  \ar[d] \\
 \widetilde{\Xi}' \ar[r]^{\widetilde{\xi}'} & \widetilde{\Xi}'' \ar[r] &  \Sigma
} \]
in which the vertical morphisms are coCartesian 1-morphisms w.r.t.\@ $p_2 \circ p_1$ over $f$. 
The diagram (\ref{diaPROPBIFIBTRANSITIVITYCONVERSE}) is point-wise equivalent to the diagram
\[ \xymatrix{
\Hom_{\mathcal{E},\id_U}(\widetilde{\Xi}'', \Sigma) \ar[r]^{\widetilde{\xi}' } \ar[d] &  \Hom_{\mathcal{E},\id_U}(\widetilde{\Xi}', \Sigma) \ar[d] \\
\Hom_{\mathcal{D},\id_U}(p_1(\widetilde{\Xi}''), \mathcal{G}) \ar[r]^{\circ p_1(\widetilde{\xi}') } & \Hom_{\mathcal{D},\id_U}(p(\widetilde{\Xi}'), \mathcal{G})
} \]
which is 2-Cartesian because $\widetilde{\xi}'$ is coCartesian w.r.t.\@ $p_{1,U}$ by property 3.
\end{proof}

\section{Correspondences in a category and abstract six-functor formalisms}\label{SECTIONCOR}

Let $\mathcal{S}$ be a usual 1-category with fiber products and final object and assume that strictly associative fiber products have been chosen in $\mathcal{S}$.

\begin{DEF}\label{DEFSCOR}
We define the {\bf 2-multicategory $\mathcal{S}^{\cor}$ of correspondences in $\mathcal{S}$}  to be the following 2-multicategory. 
\begin{enumerate}
\item The objects are the objects of $\mathcal{S}$.
\item The 1-morphisms $\Hom(S_1, \dots, S_n; T)$ are the (multi-)correspondences\footnote{as usual, $n=0$ is allowed.}
\[ \xymatrix{ &&&U \ar[dlll]_{\alpha_1} \ar[dl]^{\alpha_n}  \ar[dr]^{\beta} \\ 
S_1 & \cdots & S_n &;& T.
} \]

\item The 2-morphisms $(U, \alpha_1, \dots, \alpha_n, \beta) \Rightarrow (U', \alpha_1', \dots, \alpha_n', \beta')$ are the {\em iso}morphisms $\gamma: U \rightarrow U'$ such that in
\[ \xymatrix{ 
&&&U \ar[dlll]_{\alpha_1} \ar[dl]^{\alpha_n} \ar[dd]^\gamma \ar[dr]^{\beta} \\ 
S_1 & \cdots & S_n && T \\
&&&U' \ar[ulll]^{\alpha_1'} \ar[ul]_{\alpha_n'}  \ar[ur]_{\beta'}
} \]
all triangles are commutative. 
\item The composition is given by the fiber product in the following way: the correspondence 
\[ \xymatrix{
&&&&& U \times_{T_i} V  \ar[lld] \ar[rrd] \\
&&& U \ar[llld] \ar[ld] \ar[rrrd]^{\beta_U} &&&&  V \ar[llld] \ar[ld]^{\alpha_{V,i}}  \ar[rd] \ar[rrrd] \\
S_1 & \cdots & S_n & ; & T_1 & \cdots & T_i & \cdots & T_m & ; & W
} \]
in $\Hom(T_1, \dots, T_{i-1}, S_1, \dots, S_n, T_{i+1},  \dots, T_m; W)$
is the composition w.r.t.\@ the $i$-th slot of the left correspondence in  $\Hom(S_1, \dots, S_n; T_i)$ and
the right correspondence in $\Hom(T_1, \dots, T_m; W)$. 
\end{enumerate}
\end{DEF}

The 2-multicategory $\mathcal{S}^{\cor}$ is symmetric, representable
(i.e.\@ opfibered over $\{\cdot\}$), closed (i.e.\@ fibered over $\{\cdot\}$) and self-dual, with tensor product and internal hom {\em both} given by the product $\times$ in $\mathcal{S}$ and having as unit
the final object of $\mathcal{S}$. 

\begin{DEF}
We define also the larger category $\mathcal{S}^{\cor,G}$ where in addition every morphism $\gamma: U \rightarrow U'$ such that the above diagrams commute is a 2-morphism (i.e.\@ $\gamma$ does not necessarily have to be an isomorphism). 
\end{DEF}

\begin{PAR}
The previous definition can be generalized to the case of a general opmulticategory (\ref{OPMULTI}) $\mathcal{S}$ which has multipullbacks: Given a multimorphism $T \rightarrow S_1, \dots, S_n$  and a morphism $S_i' \rightarrow S_i$ for some $1 \le i \le n$,  
a {\bf multipullback}  is a universal square of the form
\[ \xymatrix{
T' \ar[r] \ar[d] & S_1, \dots, S_i', \dots, S_n \ar[d] \\
T \ar[r] & S_1, \dots, S_n.
} \]

A usual category  $\mathcal{S}$ becomes an opmulticategory setting 
\begin{equation}\label{opmulti} \Hom(T; S_1, \dots, S_n) := \Hom(T, S_1) \times \cdots \times \Hom(T, S_n). \end{equation}
In case that a usual category $\mathcal{S}$ has pullbacks it automatically has multipullbacks w.r.t.\@ opmulticategory structure given by (\ref{opmulti}). Those are given by
 Cartesian squares
\[ \xymatrix{
T' \ar[r] \ar[d] &  S_i'  \ar[d] \\
T \ar[r] & S_i.
} \]
For any opmulticategory $\mathcal{S}$ with multipullbacks we define $\mathcal{S}^{\cor}$ to be the 2-category whose objects are the objects of $\mathcal{S}$, whose 1-morphisms
are the multicorrespondences of the form
\[ \xymatrix{
& U \ar[rd] \ar[ld]  \\
S_1, \dots, S_n &  & T \\
} \]
and whose 2-morphisms are commutative diagrams of multimorphisms
\[ \xymatrix{
& U \ar[rd] \ar[ld] \ar[dd] \\
S_1, \dots, S_n &  & T. \\
& U' \ar[ru] \ar[lu]  
} \]
The composition is given by forming the multipullback. 
The reader may check that if the opmulticategory structure on $\mathcal{S}$ is given by (\ref{opmulti}) we reobtain the 2-multicategory $\mathcal{S}^{\cor}$ defined in \ref{DEFSCOR}.
\end{PAR}

\begin{PAR}
We now extend \cite[Definition A.2.16]{Hor15} (cf.\@ Section~\ref{GROTHWIRTH} for an explanation of the terminology).
\end{PAR}
\begin{DEF}\label{DEF6FU}
Let $\mathcal{S}$ be a opmulticategory with multipushouts. A {\bf (symmetric) Grothendieck six-functor-formalism} on $\mathcal{S}$ is a 1-bifibered and 2-bifibered (symmetric) 2-multicategory with 1-categorical fibers
\[ p: \mathcal{D} \rightarrow \mathcal{S}^{\mathrm{cor}}. \]
A {\bf (symmetric) Grothendieck context} on $\mathcal{S}$ is a 1-bifibered and 2-opfibered (symmetric) 2-multicategory with 1-categorical fibers
\[ p: \mathcal{D} \rightarrow \mathcal{S}^{\mathrm{cor},G}. \]
A {\bf (symmetric) Wirthm\"uller context} on $\mathcal{S}$ is a 1-bifibered and 2-fibered (symmetric) 2-multicategory with 1-categorical fibers
\[ p: \mathcal{D} \rightarrow \mathcal{S}^{\mathrm{cor},G}. \]
\end{DEF}

\begin{PAR}\label{PARS0}
If we are given a class of ``proper'' (resp.\@ ``etale'') 1-ary morphisms $\mathcal{S}_0$ in $\mathcal{S}$, it is convenient to define $\mathcal{S}^{\cor, 0}$ to be the category where the morphisms $\gamma: U \rightarrow U'$ entering the definition of 2-morphism are the morphisms in $\mathcal{S}_0$. Then we would consider a 1-bifibration
\[ p: \mathcal{D} \rightarrow \mathcal{S}^{\mathrm{cor}, 0} \]
which is a 2-opfibration in the proper case and a 2-fibration in the etale case. We call this respectively a {\bf (symmetric) proper six-functor-formalism} and a {\bf (symmetric) etale six-functor-formalism}. 
\end{PAR}

\begin{PAR}\label{CONCRETE6FU}
We have a morphism of opfibered (over $\{\cdot\}$) symmetric multicategories $\mathcal{S}^{\mathrm{op}} \rightarrow \mathcal{S}^{\mathrm{cor}}$.
However, if $\mathcal{S}$ has the opmulticategory structure (\ref{opmulti}),  i.e.\@ if $\mathcal{S}^{\cor}$ is as defined in \ref{DEFSCOR},  there is no reasonable morphism of opfibered multicategories $\mathcal{S} \rightarrow \mathcal{S}^{\mathrm{cor}}$ where $\mathcal{S}$ is equipped with the symmetric multicategory structure as in \ref{EXMULTICATS}\footnote{There is though a morphism of multicategories $\mathcal{S} \rightarrow \mathcal{S}^{\mathrm{cor}}$, where $\mathcal{S}$ is equipped with the multicategory structure $\Hom_{\mathcal{S}}(S_1, \dots, S_n; T):=\Hom(S_1 \times \cdots \times S_n; T)$.}. This reflects the fact that, in the classical formulation of the six functors, there is no compatibility involving only `$\otimes$' and `$!$'. From a Grothendieck six-functor-formalism over $\mathcal{S}$ equipped with the opmulticategory structure (\ref{opmulti}) we get operations $g_*$, $g^*$ as the pull-back and the push-forward along the correspondence
\[ \xymatrix{ 
 & \ar[ld]_g S \ar@{=}[rd] &\\
 T & ; & S.   }\]
 We get $f^!$ and $f_!$ as the pull-back and the push-forward along the correspondence
\[ \xymatrix{ 
 & \ar@{=}[ld] S \ar[rd]^f &\\
 S & ; & T.   }\]
 
We get the monoidal product $\mathcal{E} \otimes \mathcal{F}$ for objects $\mathcal{E}, \mathcal{F}$ above $S$ as the target of any Cartesian morphism 
$\otimes$ over the correspondence
\[ \xi_S = 
 \left(\vcenter{\xymatrix{
& & S \ar@{=}[lld] \ar@{=}[ld] \ar@{=}[rd] &  \\
S & S & ; & S }}\right).
\]

Alternatively, we have
\[ \mathcal{E} \otimes \mathcal{F} = \Delta^* (\mathcal{E} \boxtimes \mathcal{F}) \]
where $\Delta^*$ is the push-forward along the correspondence
\[ \left(\vcenter{\xymatrix{
& S \ar[ld]_\Delta \ar@{=}[rd] & \\
S \times S & ; & S }}\right)
 \]
induced by the canonical 1-morphism $\xi_S \in \Hom(S,S; S)$, and where $\boxtimes$ is the absolute monoidal product which exists because by Proposition~\ref{PROPBIFIBTRANSITIVITY} the composition $\mathcal{D} \rightarrow \{\cdot\}$ is opfibered as well, i.e.\@ $\mathcal{D}$ is monoidal.
\end{PAR}

\begin{PAR}
It is easy to derive from the definition of bifibered multicategory over $\mathcal{S}^{\mathrm{cor}}$ that
the absolute monoidal product $\mathcal{E} \boxtimes \mathcal{F}$ can be reconstructed from the fiber-wise product as $\pr_1^*\mathcal{E}  \otimes \pr_2^*\mathcal{F}$ on $S \times T$, whereas the
absolute $\mathbf{HOM}(\mathcal{E}, \mathcal{F})$ is given by $\mathcal{HOM}(\pr_1^*\mathcal{E}, \pr_2^! \mathcal{F})$ on $S \times T$.
In particular, for an object $\mathcal{E}$ of $\mathcal{D}$ lying over an object $S$ in $\mathcal{S}$, we can define the absolute duality by $D\mathcal{E}:=\mathbf{HOM}(\mathcal{E},1)$. It is then equal to $\mathcal{HOM}(\mathcal{E}, \pi^! 1)$ for $\pi: S \rightarrow \cdot$ being the final morphism. Here $1$ is the unit object w.r.t.\@ to the monodal structure on $\mathcal{D}_{\cdot}$, i.e. an object representing $\Hom_{\mathcal{D}_{\cdot}}(;-)$. The unit object $1$ seen as an object in $\mathcal{D}$ is also the unit w.r.t.\@ the absolute monoidal structure. 
We will discuss this more thoroughly in Section~\ref{ABSRELMONOIDAL}.
\end{PAR}

\begin{PROP}\label{LEMMA6FU}
Given a Grothendieck six-functor-formalism on $\mathcal{S}$ 
\[ p: \mathcal{D} \rightarrow \mathcal{S}^{\mathrm{cor}} \]
where $\mathcal{S}$ is a usual category equipped with the opmulticategory structure (\ref{opmulti})
for the six functors as extracted in \ref{CONCRETE6FU} there exist naturally the following compatibility isomophisms:
\begin{center}
\begin{tabular}{rlll}
& left adjoints & right adjoints \\
\hline
$(*,*)$ & $(fg)^* \iso g^* f^*$ & $(fg)_* \iso f_* g_*$ &\\
$(!,!)$ & $(fg)_! \iso f_! g_!$ & $(fg)^! \iso g^! f^!$ &\\ 
$(!,*)$ 
& $g^* f_! \iso F_! G^*$ & $G_* F^! \iso f^! g_*$ & \\
$(\otimes,*)$ & $f^*(- \otimes -) \iso f^*- \otimes f^* -$ & $f_* \mathcal{HOM}(f^*-, -) \iso \mathcal{HOM}(-, f_*-)$  & \\
$(\otimes,!)$ & $f_!(- \otimes f^* -) \iso  (f_! -) \otimes -$ & $f_* \mathcal{HOM}(-, f^!-) \iso \mathcal{HOM}(f_! -, -)$ & \\ 
& & $f^!\mathcal{HOM}(-, -) \iso \mathcal{HOM}(f^* -, f^!-)$ & \\
$(\otimes, \otimes)$ &  $(- \otimes -) \otimes - \iso - \otimes (- \otimes -)$ &  $\mathcal{HOM}(- \otimes -, -) \iso \mathcal{HOM}(-, \mathcal{HOM}(-, -))$ & 
\end{tabular}
\end{center}

Here $f, g, F, G$ are morphisms in $\mathcal{S}$ which, in the $(!,*)$-row, are related by a  {\em Cartesian} diagram
\[ \xymatrix{ \cdot \ar[r]^G \ar[d]_F  & \cdot \ar[d]^f \\ \cdot \ar[r]_g & \cdot } \]
\end{PROP}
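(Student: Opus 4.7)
The strategy is to translate the bifibration $p$ into a pseudo-functor on $\mathcal{S}^{\cor}$ and to recognize every compatibility isomorphism as the image under this pseudo-functor of a canonical 2-isomorphism of correspondences. By Proposition~\ref{PROPGROTHCONSTR}, the 1-opfibration and 2-fibration $p$ with 1-categorical fibers corresponds to a pseudo-functor
\[ \Xi_{\mathcal{D}}: \mathcal{S}^{\cor} \longrightarrow \mathcal{CAT} \]
sending each correspondence to its push-forward functor and each 2-morphism of correspondences to a natural transformation of functors; the coherence data of $\Xi_{\mathcal{D}}$ supply canonical natural isomorphisms $\Xi_{\mathcal{D}}(\beta \circ_i \alpha) \cong \Xi_{\mathcal{D}}(\beta) \circ_i \Xi_{\mathcal{D}}(\alpha)$ for composable multi\-correspondences. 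Under $\Xi_{\mathcal{D}}$ the operations $f^*$, $f_!$, and $\otimes$ extracted in \ref{CONCRETE6FU} are precisely the images of the correspondences $(T \stackrel{f}{\leftarrow} S \stackrel{=}{\to} S)$, $(S \stackrel{=}{\leftarrow} S \stackrel{f}{\to} T)$, and $\xi_S = (S \stackrel{=}{\leftarrow} S \stackrel{=}{\to} S;\, S)$, respectively.

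The plan is then to produce, for each row of the left-adjoint column of the table, an explicit canonical 2-isomorphism of correspondences between two composites in $\mathcal{S}^{\cor}$ and to take its image under $\Xi_{\mathcal{D}}$. For $(*,*)$ and $(!,!)$, the fiber product defining the composition of two 1-ary correspondences of pull-back (resp.\@ push-forward) type collapses to a strict identity with the correspondence for the composite morphism; functoriality of $\Xi_{\mathcal{D}}$ gives the isomorphism. For $(!,*)$, composing the correspondence that yields $f_!$ with the one that yields $g^*$ produces as middle vertex the fiber product of $f$ and $g$ in $\mathcal{S}$, hence the correspondence computing $F_! G^*$ along the Cartesian square; this is an equality of 1-morphisms in $\mathcal{S}^{\cor}$ (because the fiber product is fixed by assumption), so the base-change morphism appears as the structural coherence isomorphism of $\Xi_{\mathcal{D}}$. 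For $(\otimes, *)$ and $(\otimes,!)$, one computes the two composites $\mu_1 \circ_1 (\tau_S \circ_2 \nu_1)$ and $\tau_T \circ_1 \mu_1$ as correspondences $(S, T) \to T$ and observes that both yield the same multicorrespondence with middle vertex $S$; this gives the projection formula, while the compatibility $f^*(- \otimes -) \cong f^* - \otimes f^* -$ is obtained by an analogous triple computation on $\xi_T$. For $(\otimes, \otimes)$, associativity of $\otimes$ corresponds to the identification $\xi_S \circ_1 \xi_S = \xi_S \circ_2 \xi_S$ as the triple-diagonal correspondence in $\Hom(S,S,S;S)$.

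Once every left-adjoint isomorphism is in place, the corresponding right-adjoint isomorphism in each row is obtained formally: since $p$ is 1-bifibered and 2-bifibered, every push-forward functor has a right adjoint (the pull-back along the correspondence with its two legs swapped in the relevant slot), and the right-adjoint identities are the adjoint transposes. Explicitly, $(fg)_* \cong f_* g_*$ is the mate of $g^* f^* \cong (fg)^*$, the base change $G_* F^! \cong f^! g_*$ is the mate of the $(!,*)$ left-adjoint isomorphism, the $\mathcal{HOM}$-formulas are the mates of the projection formulas, and the internal Hom adjunction is the mate of tensor associativity.

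\textbf{Main obstacle.} The substantive content is not combinatorial but coherence-theoretic: one must check that the 2-isomorphisms of correspondences produced by composition really come from the universal property of the chosen fiber products (so that the diagrams \ref{INTRO6FUISO} are the natural ones), and, for the right-adjoint side, that passing to mates produces precisely the isomorphisms written in the table. The former is controlled by our standing assumption that strictly associative fiber products have been chosen in $\mathcal{S}$; the latter is a standard 2-categorical manipulation of adjoints in the 2-bifibration, using that 2-isomorphisms in $\mathcal{S}^{\cor}$ lift canonically (by Lemma~\ref{LEMMACARTSQUAREISOFIB} applied in the 2-isofibration~$p$) to 2-isomorphisms between push-forward and pull-back pairs.
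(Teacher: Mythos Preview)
Your approach is correct and is exactly the one the paper intends: the paper does not prove this proposition here but cites \cite[Lemma~A.2.19]{Hor15}, and the remark immediately following the statement illustrates precisely your method (tracing chains of 2-isomorphisms of multicorrespondences) on a more elaborate compatibility diagram. So there is essentially nothing to compare.

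Two small points of precision. First, your phrase ``the pull-back along the correspondence with its two legs swapped'' is not quite right: for a 1-ary correspondence $\xi$ the right adjoint of the push-forward $\xi_\bullet$ is the pull-back $\xi^\bullet$ along the \emph{same} correspondence (this is what 1-bifibered means), not along a reversed one. Second, while it is true that every right-adjoint identity in the table follows by Yoneda from the corresponding left-adjoint identity together with the relevant adjunctions, calling all of these ``mates'' is slightly loose terminology (for the $\mathcal{HOM}$-formulas one is really running a chain of adjunction bijections rather than a single mate square). A cleaner formulation, and the one implicit in the paper, is to note that the 1-fibration structure gives a second pseudo-functor (contravariant on 1-morphisms), and the very same equalities of composite correspondences that you wrote down for the left column produce the right-column isomorphisms when read through this dual pseudo-functor.
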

\begin{proof} See \cite[Lemma~A.2.19]{Hor15}.
\end{proof}

\begin{BEM}
This raises the question about to what extent a converse of Proposition~\ref{LEMMA6FU} holds true. In the literature a six-functor-formalism is often introduced
merely as a collection of functors such that the isomorphisms of Proposition~\ref{LEMMA6FU} exist, without specifying explicitly their compatibilities. In view
of the theory developed in this section the question becomes: how can the 1- and 2-morphisms in the 2-multicategory $\mathcal{S}^{\cor}$ be presented by generators and relations? We will not try to answer this question because all compatibilities, if needed, can be easily derived from the definition of $\mathcal{S}^{\cor}$. As an illustration, we prove that the diagram of isomorphisms
\begin{equation} \label{diatocommute}
\vcenter{
\xymatrix{
G_! F^* (A \otimes g^*B)  \ar[d]_{(\otimes, *)}  && f^* g_! (A \otimes g^*B) \ar[ll]_{(!,*)}  \ar[d]^{(\otimes, !)} \\
G_! ((F^* A) \otimes F^*g^*B)   && f^* (g_!A \otimes B)  \ar[dd]^{(\otimes, *)} \\
G_! ((F^* A) \otimes (gF)^*) \ar[d]_{(*,*)} \ar[u]^{(*,*)}  \\
G_! ((F^* A) \otimes G^*f^*B)  \ar[dr]_{(\otimes, !)}  &&  (f^* g_!A)  \otimes f^*B  \ar[dl]^{(*, !)} \\  
& (G_! F^* A) \otimes f^*B  &
} }
\end{equation}
commutes.
For this we only have to check that the two chains of obvious 2-isomorphisms in $\mathcal{S}^{\cor}$ given in Figure~\ref{fig1} and Figure~\ref{fig2} are equal. 

\begin{figure}[!ht]
{ \tiny
\[
 \begin{array}{c}
  \left( \vcenter{ 
\xymatrix{ &  X  \ar[ld]_{F} \ar[rd]^{G} \\
 Z & ; & Y } } \right) \left( \vcenter{ 
\xymatrix{ & &  Z \ar[lld]_{g} \ar@{=}[ld] \ar@{=}[rd] \\
 W & Z & ; & Z  } } \right)
 
 \\  
\downarrow \\

 \left( \vcenter{ 
\xymatrix{ & &  X \ar[lld]_{F} \ar[ld]^F \ar[rd]^G \\
 Z & Z & ; & Y  } } \right)
 \left( \vcenter{ 
\xymatrix{ &  Z  \ar[ld]^{g} \ar@{=}[rd] \\
 W & ; & Z } } \right)

\\  
\uparrow  \\
  
 \left( \vcenter{ 
\xymatrix{ & &  X \ar[lld]_{gF} \ar[ld]^F \ar[rd]^G \\
 W & Z & ; & Y  } } \right)

\\ 
\downarrow \\

 \left( \vcenter{ 
\xymatrix{ & &  X \ar[lld]_{G} \ar[ld]^F \ar[rd]^G \\
 Y & Z & ; & Y  } } \right)
 \left( \vcenter{ 
\xymatrix{ &  Y  \ar[ld]^{f} \ar@{=}[rd] \\
 W & ; & Y } } \right)
\\ 
\downarrow \\

 \left( \vcenter{ 
\xymatrix{ & &  Y \ar@{=}[lld] \ar@{=}[ld] \ar@{=}[rd] \\
 Y & Y & ; & Y  } } \right)   \left( \vcenter{ 
\xymatrix{ &  X  \ar[ld]_{F} \ar[rd]^{G} \\
 Z & ; & Y } } \right) 
 \left( \vcenter{ 
\xymatrix{ &  Y  \ar[ld]^{f} \ar@{=}[rd] \\
 W & ; & Y } } \right) 
\end{array} \]
}
\caption{The first composition}\label{fig1}
\end{figure}
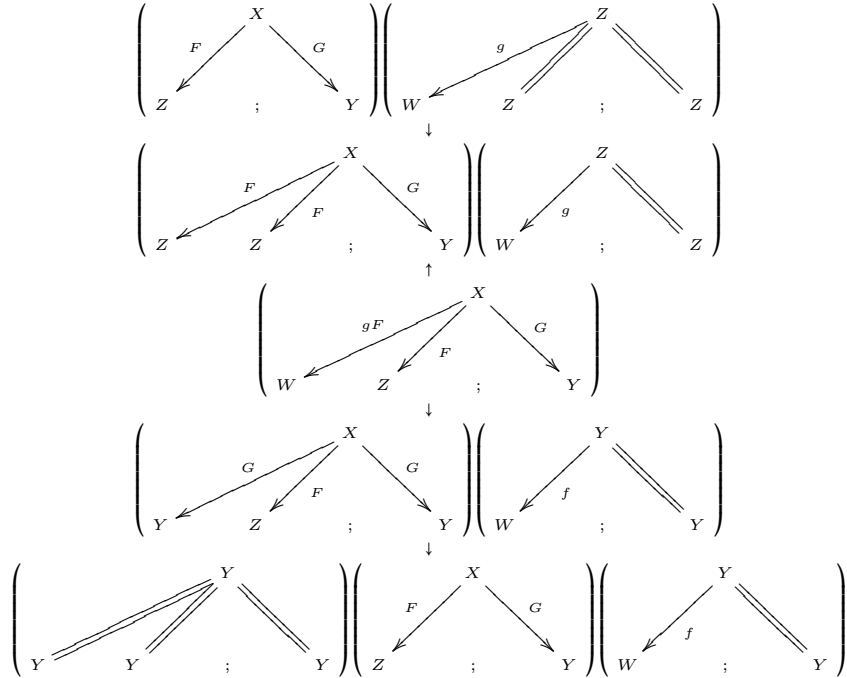

\begin{figure}[!ht]
{ \tiny
\[
 \begin{array}{c}
  \left( \vcenter{ 
\xymatrix{ &  X  \ar[ld]_{F} \ar[rd]^{G} \\
 Z & ; & Y } } \right) \left( \vcenter{ 
\xymatrix{ & &  Z \ar[lld]_{g} \ar@{=}[ld] \ar@{=}[rd] \\
 W & Z & ; & Z  } } \right)
 
 \\  
\uparrow \\

  \left( \vcenter{ 
\xymatrix{ &  Y  \ar[ld]_{f} \ar@{=}[rd] \\
 W & ; & Y } } \right)
  \left( \vcenter{ 
\xymatrix{ &  Z  \ar@{=}[ld] \ar[rd]^g \\
 Z & ; & W } } \right)
  \left( \vcenter{ 
\xymatrix{ & &  Z \ar[lld]_{g} \ar@{=}[ld] \ar@{=}[rd] \\
 W & Z & ; & Z  } } \right)

 \\  
\downarrow \\

  \left( \vcenter{ 
\xymatrix{ &  Y  \ar[ld]_{f} \ar@{=}[rd] \\
 W & ; & Y } } \right)
  \left( \vcenter{ 
\xymatrix{ & &  W \ar@{=}[lld] \ar@{=}[ld] \ar@{=}[rd] \\
 W & W & ; & W  } } \right)
  \left( \vcenter{ 
\xymatrix{ &  Z  \ar@{=}[ld] \ar[rd]^g \\
 Z & ; & W } } \right)

 \\  
\downarrow \\

  \left( \vcenter{ 
\xymatrix{ & &  Y \ar[lld]_f \ar[ld]^f \ar@{=}[rd] \\
 W & W & ; & Y  } } \right)
  \left( \vcenter{ 
\xymatrix{ &  Z  \ar@{=}[ld] \ar[rd]^g \\
 Z & ; & W } } \right)

 \\  
\downarrow \\

 \left( \vcenter{ 
\xymatrix{ & &  Y \ar@{=}[lld] \ar@{=}[ld] \ar@{=}[rd] \\
 Y & Y & ; & Y  } } \right)   \left( \vcenter{ 
\xymatrix{ &  X  \ar[ld]_{F} \ar[rd]^{G} \\
 Z & ; & Y } } \right) 
 \left( \vcenter{ 
\xymatrix{ &  Y  \ar[ld]^{f} \ar@{=}[rd] \\
 W & ; & Y } } \right) 
\end{array} \]
}
\caption{The second composition}\label{fig2}
\end{figure}
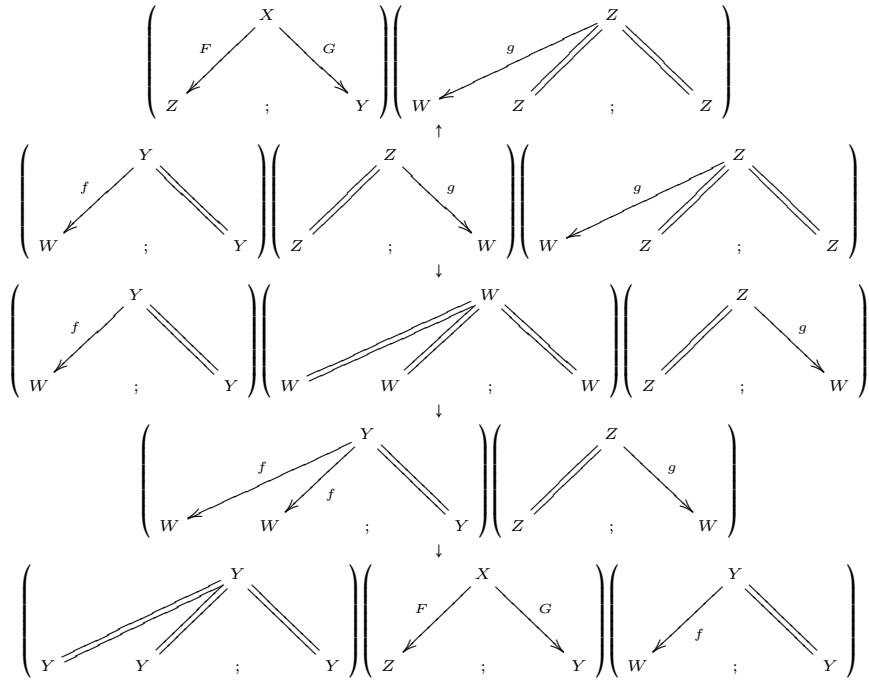
To see this, observe that the multicorrespondences in the lines are all 2-isomorphic to the multicorrespondence
\[
\xymatrix{ & &  X \ar[lld]_{gF} \ar[ld]^F \ar[rd]^G \\
 W & Z & ; & Y.  } 
\]
and that all the 2-isomorphisms in the chains (which induce the isomorphisms in Lemma~\ref{LEMMA6FU} used in the diagram (\ref{diatocommute})) respect these 2-isomorphisms. 

See \ref{EXPROPERETALE} for a similar calculation involving also an (iso-)morphism $f_! \rightarrow f_*$, i.e.\@ involving a proper six-functor-formalism. 
\end{BEM}

\begin{PAR}
Canonical Grothendieck contexts:
Let $\mathcal{S}$ be a 1-opmulticategory with multipullbacks and let 
$p: \mathcal{D} \rightarrow \mathcal{S}^{\op}$ be an ordinary bifibration of 1-multicategories. Let $\mathcal{S}_0$ be a subcategory of ``proper'' morphisms 
for which projection formula and base change formula hold true. This means that for every multipullback with $f_i \in \mathcal{S}_0$
\[ \xymatrix{
T' \ar[r]^-{G} \ar[d]_{F_i} & S_1, \dots, S_i', \dots, S_n \ar[d]^{(\id_{S_1}, \dots, f_i, \dots, \id_{S_n})} \\
T \ar[r]_-{g} & S_1, \dots, S_i, \dots, S_n
} \] 
the canonical exchange natural transformation
\begin{equation}\label{eqbasechange} 
g_\bullet  \circ_i f_i^{\bullet} \rightarrow F_i^\bullet \circ  G_\bullet    
\end{equation}
is an isomorphism. Note that the morphisms are morphisms in $\mathcal{S}$ (and not in $\mathcal{S}^{\op}$), e.g.\@ $F_i^\bullet: \mathcal{D}_{T'} \rightarrow \mathcal{D}_{T}$ denotes a right-adjoint {\em push-forward} along the corresponding morphism in $\mathcal{S}$. 

Assume that $\mathcal{S}_0$ is stable under multipullback, i.e.\@ for any multipullback diagram as above, $F_i$ is in $\mathcal{S}_0$ as well. 
\end{PAR}

\begin{DEF}\label{DEFCANGROTHENDIECK}
Define a category $\widetilde{\mathcal{D}}^{\mathrm{proper}}$ which has the same objects as $\mathcal{D}$ and whose 1-morphisms $\Hom(\mathcal{E}_1, \dots, \mathcal{E}_n; \mathcal{F})$, where $p(\mathcal{E}_i)=S_i$ and $p(\mathcal{F}) = T$, are the 1-morphisms 
\[ \xymatrix{ &U  \ar[dl]_{g}  \ar[dr]^{f} \\ 
(S_1, \dots, S_n) && T
} \]
in $\mathcal{S}^{\cor, 0}$ (cf.\@ \ref{PARS0}) {\em such that $f \in \mathcal{S}_0$}, together with a 1-morphism
\[ \rho \in \Hom_{T}( f^\bullet g_\bullet (\mathcal{E}_1, \dots, \mathcal{E}_n);  \mathcal{F})   \]
in $\mathcal{D}_T$.
A 2-morphism $(U, g, f, \rho) \Rightarrow (U', g', f', \rho')$ is a morphism $h: U \rightarrow U'$ in $\mathcal{S}_0$ making the obvious diagrams commute and such that the diagram
\[ \xymatrix{
(f')^\bullet g_\bullet (\mathcal{E}_1, \dots, \mathcal{E}_n) \ar[r]^-{\rho'} \ar[d]_{\mathrm{unit}_h} & \mathcal{F} \\
(f')^\bullet h^\bullet h_\bullet g'_\bullet (\mathcal{E}_1, \dots, \mathcal{E}_n) \ar[r]^-\sim & \ar[u]_{\rho}  f^\bullet g_\bullet (\mathcal{E}_1, \dots, \mathcal{E}_n)
} \]
also commutes. 
\end{DEF}

\begin{PROP}\label{PROPCANGROTHENDIECK}
Definition~\ref{DEFCANGROTHENDIECK} is reasonable, i.e.\@ the composition induced by projection and base change formula, i.e.\@ by the natural isomorphism (\ref{eqbasechange}), is associative.

The obvious projection
\[ \widetilde{p}: \widetilde{\mathcal{D}}^{\mathrm{proper}} \rightarrow \mathcal{S}^{\cor, \mathrm{proper}, 0} \]
where $\mathcal{S}^{\cor, \mathrm{proper}, 0}$ is the subcategory of $\mathcal{S}^{\cor, 0}$ in whose multicorrespondences the morphism $f$ is in $\mathcal{S}^0$, 
is a 1-opfibration and 2-opfibration of 2-multicategories with 1-categorical fibers.  
\end{PROP}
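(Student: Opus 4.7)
The strategy is to exhibit $\widetilde{\mathcal{D}}^{\mathrm{proper}}$ as the total 2-multicategory associated, via the 2-categorical Grothendieck construction (Definition~\ref{GROTHCONSTR}), to a suitable pseudo-functor of 2-multicategories
$$\Xi \colon \mathcal{S}^{\cor, \mathrm{proper}, 0} \to \mathcal{CAT}.$$
Once this identification is made, both assertions of the proposition are automatic: the Grothendieck construction always yields a 1-opfibration and 2-opfibration, and its fibers are $\Xi(T) = \mathcal{D}_T$, which are 1-categorical; associativity of the composition in $\widetilde{\mathcal{D}}^{\mathrm{proper}}$ is then inherited from pseudo-functoriality of $\Xi$.

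I would define $\Xi$ by sending an object $T$ to $\mathcal{D}_T$, a 1-morphism $(U, g, f)$ (with $f \in \mathcal{S}_0$) to the multifunctor
$$\Xi(U, g, f) := f^\bullet \circ g_\bullet \colon \mathcal{D}_{S_1} \times \cdots \times \mathcal{D}_{S_n} \to \mathcal{D}_T,$$
and a 2-morphism $h \colon U \to U'$ in $\mathcal{S}_0$ (with $g' h = g$ and $f' h = f$) to the natural transformation
$$(f')^\bullet g'_\bullet \xrightarrow{\mathrm{unit}_h} (f')^\bullet h^\bullet h_\bullet g'_\bullet = f^\bullet g_\bullet,$$
the last equality being pseudo-functoriality of the underlying bifibration $\mathcal{D} \to \mathcal{S}^{\op}$. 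The crucial compatibility of $\Xi$ with the composition of two correspondences (via multipullback at slot $i$, with $V' := V \times_{T_i} U$ and projections $F \colon V' \to V$, $G \colon V' \to U$) is built from the base change isomorphism (\ref{eqbasechange}) combined with pseudo-functoriality of $\mathcal{D} \to \mathcal{S}^{\op}$: using base change to pull $f^\bullet$ through $(g')_\bullet$ at slot $i$, and then merging $G_\bullet \circ_i g_\bullet$ via pseudo-functoriality, one gets
$$\bigl((f')^\bullet g'_\bullet\bigr) \circ_i \bigl(f^\bullet g_\bullet\bigr) \;\cong\; (f'F)^\bullet (G \circ_i g)_\bullet \;=\; f_{\mathrm{comb}}^\bullet (g_{\mathrm{comb}})_\bullet.$$

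It then remains to identify $\widetilde{\mathcal{D}}^{\mathrm{proper}}$ with $\int \Xi$ over $\mathcal{S}^{\cor, \mathrm{proper}, 0}$. Unwinding Definition~\ref{GROTHCONSTR}, a 1-morphism of $\int \Xi$ is a 1-morphism $(U, g, f)$ of $\mathcal{S}^{\cor, \mathrm{proper}, 0}$ together with a morphism $\rho \colon \Xi(U,g,f)(\mathcal{E}_1, \dots, \mathcal{E}_n) = f^\bullet g_\bullet(\mathcal{E}_1, \dots, \mathcal{E}_n) \to \mathcal{F}$ in $\mathcal{D}_T$ — exactly the 1-morphism data of $\widetilde{\mathcal{D}}^{\mathrm{proper}}$. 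The condition $\rho' = \rho \circ \Xi(h)$ defining a 2-morphism of $\int \Xi$ over $h$ reads precisely as the commutation of the diagram imposed in Definition~\ref{DEFCANGROTHENDIECK}.

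The main obstacle is checking the full pseudo-functor coherences of Definition~\ref{DEFPSEUDOFUN} for $\Xi$, in particular the associativity hexagon for three composable correspondences. This reduces to a cocycle identity for the base change exchange morphism over a two-step iterated multipullback; one verifies it by invoking the universal property of multipullbacks in $\mathcal{S}$ together with the uniqueness (up to unique isomorphism) of (co)Cartesian lifts in the bifibration $\mathcal{D} \to \mathcal{S}^{\op}$. The remaining compatibilities involving 2-morphisms reduce to naturality of the exchange morphism and the triangle identities for the adjunctions $(h_\bullet, h^\bullet)$ along morphisms $h \in \mathcal{S}_0$.
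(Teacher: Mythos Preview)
Your approach via the Grothendieck construction is correct in spirit and considerably more explicit than the paper's own proof, which simply declares the check ``straight-forward'' and left to the reader, noting only that the 2-push-forward is $\rho \mapsto \rho \circ (\mathrm{unit}_h\text{-map})$.

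There is one point that needs correction. As you define it, $\Xi$ sends the 2-morphism $h \colon (U,g,f) \Rightarrow (U',g',f')$ to a natural transformation going from $\Xi(U',g',f') = (f')^\bullet g'_\bullet$ to $\Xi(U,g,f) = f^\bullet g_\bullet$; that is, $\Xi$ is \emph{contravariant} on 2-morphisms. So what you actually construct is a pseudo-functor
\[
\Xi \colon \bigl(\mathcal{S}^{\cor,\mathrm{proper},0}\bigr)^{2\text{-}\op} \longrightarrow \mathcal{CAT},
\]
not one with domain $\mathcal{S}^{\cor,\mathrm{proper},0}$. This matters for your appeal to Definition~\ref{GROTHCONSTR}: the default Grothendieck construction there, applied to a covariant $\Xi \colon \mathcal{S} \to \mathcal{CAT}$, produces a 1-opfibration and 2-\emph{fibration}, not a 2-opfibration. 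It is the second variant (for $\Xi \colon \mathcal{S}^{2\text{-}\op} \to \mathcal{CAT}$) that yields a 1-opfibration and 2-opfibration. Once you state the domain correctly, your identification $\widetilde{\mathcal{D}}^{\mathrm{proper}} \cong \int \Xi$ goes through, the 2-morphism condition $\rho' = \rho \circ \Xi(h)$ matches Definition~\ref{DEFCANGROTHENDIECK} exactly, and the desired 2-opfibration follows. The remainder of your argument (reducing associativity to the pseudo-functor hexagon for $\Xi$, and that in turn to a cocycle identity for base change over iterated multipullbacks) is the right way to organise the verification.
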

\begin{proof}This is a straight-forward check that we leave to the reader. For the second assertion note that the category $\widetilde{\mathcal{D}}^{\mathrm{proper}}$ is obviously 2-opfibered over $\mathcal{S}^{\cor, \mathrm{proper}, 0}$, the 2-push-forward given by $\rho \mapsto \mathrm{unit}_h \circ \rho$.
\end{proof}


In particular, if (\ref{eqbasechange}) holds true for {\em all} multipullbacks in $\mathcal{S}$, and all $f^\bullet$ have right adjoints, we obtain the {\bf canonical Grothendieck context} associated with $p: \mathcal{D} \rightarrow \mathcal{S}$:
\[ \widetilde{p}: \widetilde{\mathcal{D}} \rightarrow \mathcal{S}^{\cor,G}. \]

If (\ref{eqbasechange}) holds true only for a proper subclass of morphisms, it is possible under additional hypothesis to extend the so constructed partial six-functor-formalism to a 1-opfibration (which is still 2-opfibered with 1-categorical fibers) over the whole $\mathcal{S}^{\cor, 0}$:
\[ \xymatrix{
\widetilde{\mathcal{D}}^{\mathrm{proper}} \ar@{^{(}->}[r] \ar[d] &  \widetilde{\mathcal{D}}  \ar[d] \\
\mathcal{S}^{\cor, \mathrm{proper}, 0}  \ar@{^{(}->}[r] & \mathcal{S}^{\cor, 0} 
} \]
That is, if right adjoints exist, even to a Grothendieck six-functor-formalism.
The right hand side 1-opfibration and 2-opfibration encodes also morphisms $f_! \rightarrow f_*$ for the corresponding operations and all their compatibilities (cf.\@ \ref{PROPPROPERETALE}). 
Its construction will be explained more generally in the derivator context in forthcoming articles and parallels the classical construction using compactifications.

\section{Correspondences of diagrams}\label{SECTIONDIACOR}

Let $\Dia$ be a diagram category \cite[\S 1.1]{Hor15}, i.e.\@ a full 2-subcategory of the 2-category of small categories satisfying some basic closure properties.
Assume that strictly associative fiber products have been chosen in $\Dia$. Assume also for the rest of this article that $\Dia$ permits arbitrary Grothendieck constructions, i.e.\@ if $I$ is in $\Dia$ and $F: I \rightarrow \Dia$ is a pseudo-functor, then $\int F$ is in $\Dia$.

In this section we will define a category $\Dia^{\cor}$  of correspondences in $\Dia$ similarly to the category of correspondences in a usual category considered in the last section. A Wirthm\"uller context over $\Dia^{\cor}$ in a similar way as defined in the last section will be essentially equivalent to a closed monoidal derivator with domain $\Dia$ (without the axioms (Der1) and (Der2)). Also the more general notion of fibered multiderivator developed in \cite{Hor15} can be easily encoded as a certain (op)fibration of 2-multicategories. 
Since $\Dia$ is a 2-category, the definition of $\Dia^{\cor}$ is a bit more involved. 

\begin{DEF}\label{DEFSPAN}
Let $I_1, \dots, I_n, J$ be diagrams in $\Dia$. Define $\Span(I_1, \dots, I_n; J)$ to be the following strict 2-category:
\begin{enumerate}
\item
The objects are diagrams of the form
\[ \xymatrix{ &&&A \ar[dlll]_{\alpha_1} \ar[dl]^{\alpha_n}  \ar[dr]^{\beta} \\ 
I_1 & \cdots & I_n &;& J
} \]
with $A \in \Dia$.
\item The 1-morphisms $(A, \alpha_1, \dots, \alpha_n, \beta) \Rightarrow (A', \alpha_1', \dots, \alpha_n', \beta')$ are functors $\gamma: A \rightarrow A'$ and natural transformations $\nu_1, \dots, \nu_n, \mu$:
\[ \vcenter{ \xymatrix{
A \ar[rd]_{\alpha_i} \ar[rr]^{\gamma} & \ar@{}[d]|{\Rightarrow^{\nu_i}} & A' \ar[ld]^{\alpha_i'}  \\
&I_i
} } \qquad \vcenter{ \xymatrix{
A \ar[rd]_{\beta} \ar[rr]^{\gamma} & \ar@{}[d]|{\Leftarrow^\mu}  & A' \ar[ld]^{\beta'}  \\
&J
} } \]
\item The 2-morphisms are natural transformations
$\eta: \gamma \Rightarrow \gamma'$ such that $(\alpha_i' \ast \eta) \circ \nu_i  = \nu_i'$ and $(\beta' \ast \eta) \circ \mu' = \mu$ hold.
\end{enumerate}
\end{DEF}
We define also the full subcategory $\Span^{F}(I_1, \dots, I_n; J)$ of those objects for which $\alpha_1 \times \cdots \times \alpha_n: A \rightarrow I_1 \times \cdots \times I_n$ is a fibration and $\beta$ is an opfibration.
The $\gamma$'s do not need to be morphisms of fibrations, respectively of opfibrations.

\begin{PAR}\label{1TRUNC}
For a 2-category $\mathcal{C}$, denote by $\tau_1(\mathcal{C})$ the 1-category in which the morphism sets or classes are the $\pi_0$ (sets or classes of connected components) of the respective categories of 1-morphisms in $\mathcal{C}$. 
\end{PAR}

\begin{DEF}\label{DEFDIACOR}
We define the {\bf 2-multicategory of correspondences of diagrams} $\Dia^{\cor}$ as the following 2-multicategory:
\begin{enumerate}
\item The objects are diagrams $I \in \Dia$.
\item For every $I_1, \dots, I_n, J$ diagrams in $\Dia$, the category $\Hom_{\Dia^{\cor}}(I_1, \dots, I_n; J)$ of 1-morphisms of $\Dia^{\cor}$ is the truncated category $\tau_1(\Span^F(I_1, \dots, I_n; J))$. 
\end{enumerate}
\end{DEF}


Composition is defined by taking fiber products. The diagram (forgetting the functor to $J_i$)
\[ \xymatrix{
&&&&& A \times_{J_i} B  \ar[lld] \ar[rrd] \\
&&& A \ar[llld] \ar[ld] \ar[rrrd]^{\beta_A} &&&&  B \ar[llld] \ar[ld]^{\alpha_{B,i}}  \ar[rd] \ar[rrrd] \\
I_1 & \cdots & I_n & ; & J_1 & \cdots & J_i & \cdots & J_m & ; & K
} \]
is defined to be the composition of the left hand side correspondence in $\Hom(I_1, \dots, I_n; J_i)$ with the right hand side correspondence in $\Hom(J_1, \dots, J_m; K)$. 
One checks that $A \times_{J_i} B \rightarrow J_1 \times \cdots \times J_{i-1} \times I_1 \times \cdots \times I_n \times J_{i+1} \times \cdots \times J_m$ is again a opfibration and that $A \times_{J_i} B \rightarrow K$ is again a fibration. 

It remains to be seen that the composition is functorial in 2-morphisms and that the relations in $\pi_0$ are respected. 
This is obviously the case if we have a functor $\gamma_A: A \rightarrow A'$ such that $\beta_A' \gamma_A = \beta_A'$ holds and a 
functor $\gamma_B: B \rightarrow B'$ such that $\alpha_{B,1} \gamma_B = \alpha_{B,1}'$ holds. Hence the statement follows from the following lemma.

\begin{LEMMA}
Any 2-morphism in $\Dia^{\cor}$
\[ (\gamma, \nu_1, \dots, \nu_n, \mu): (A, \alpha_1, \dots, \alpha_n, \beta) \rightarrow  (A', \alpha_1', \dots, \alpha_n', \beta') \] 
is equivalent (in the sense of $\pi_0$) to a 2-morphism in which
all $\nu_k$ are identities {\em or} to a 2-morphism where $\mu$ is an identity. 
\end{LEMMA}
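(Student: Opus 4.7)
The plan is to treat the two cases symmetrically, exploiting the two halves of the $\Span^F$ hypothesis on $A'$: the fibration $\alpha' := \alpha_1' \times \cdots \times \alpha_n' : A' \to I_1 \times \cdots \times I_n$ will handle the case where all $\nu_k$ become identities, while the opfibration $\beta' : A' \to J$ will handle the case $\mu = \id$. Recall the directions: $\nu_i : \alpha_i \Rightarrow \alpha_i' \gamma$ and $\mu : \beta' \gamma \Rightarrow \beta$. In both cases the common idea is to ``deform'' $\gamma$ along the given natural transformation using a pointwise (co)Cartesian lift.

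For the first case, for each $a \in A$ I would choose a Cartesian lift $\widetilde{\nu}(a) : \gamma''(a) \to \gamma(a)$ in $A'$ of $(\nu_1(a), \dots, \nu_n(a))$ with respect to $\alpha'$; this produces an object $\gamma''(a)$ with $\alpha_i'(\gamma''(a)) = \alpha_i(a)$. To extend $\gamma''$ to a functor, given $f : a \to b$ in $A$ the composite $\gamma(f) \circ \widetilde{\nu}(a) : \gamma''(a) \to \gamma(b)$ projects under $\alpha'$ to $\alpha'(\gamma(f)) \circ \nu(a) = \nu(b) \circ \alpha(f)$ by naturality of $\nu$, and the Cartesian universal property of $\widetilde{\nu}(b)$ then yields a unique $\gamma''(f) : \gamma''(a) \to \gamma''(b)$ with $\alpha'(\gamma''(f)) = \alpha(f)$ and $\widetilde{\nu}(b) \circ \gamma''(f) = \gamma(f) \circ \widetilde{\nu}(a)$. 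Setting $\nu_i'' := \id$ and $\mu'' := \mu \circ (\beta' \ast \widetilde{\nu})$, the tuple $(\gamma'', \id, \dots, \id, \mu'')$ is a 1-morphism in $\Span^F$, and $\widetilde{\nu} : \gamma'' \Rightarrow \gamma$ is a 2-morphism from it to $(\gamma, \nu, \mu)$; the compatibility conditions of Definition~\ref{DEFSPAN}(3) hold tautologically, since $\alpha_i' \ast \widetilde{\nu} = \nu_i$ and $\mu \circ (\beta' \ast \widetilde{\nu})$ is by definition $\mu''$.

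The case $\mu = \id$ is dual: using the opfibration $\beta'$, I would choose coCartesian lifts $\widetilde{\mu}(a) : \gamma(a) \to \gamma'''(a)$ of $\mu(a)$ and assemble $\gamma''' : A \to A'$ through the coCartesian universal property together with the naturality of $\mu$. By construction $\beta' \gamma''' = \beta$, so $\mu''' = \id$, and one takes $\nu_i''' := (\alpha_i' \ast \widetilde{\mu}) \circ \nu_i$; the natural transformation $\widetilde{\mu} : \gamma \Rightarrow \gamma'''$ is then a 2-morphism $(\gamma, \nu, \mu) \Rightarrow (\gamma''', \nu''', \id)$ exhibiting the required $\pi_0$-equivalence.

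The main obstacle is verifying the functoriality of $\gamma''$ (and dually $\gamma'''$): it is a routine but essential application of the uniqueness in the (co)Cartesian universal property combined with the naturality of $\nu$ (resp.\ $\mu$), which simultaneously yields compatibility with identities and with composition in $A$. Everything else reduces to straightforward bookkeeping of horizontal and vertical pasting of the 2-morphisms produced.
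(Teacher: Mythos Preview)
Your argument is correct and coincides with the paper's proof: the paper spells out the case $\mu=\id$ by lifting $\mu$ along the opfibration $\beta'$ to a natural transformation $\xi\colon\gamma\Rightarrow\gamma'$ (your $\widetilde{\mu}$) and then says ``similarly the other case is shown,'' which is precisely your Cartesian-lift argument along the fibration $\alpha_1'\times\cdots\times\alpha_n'$. The only difference is presentational---you treat both cases explicitly and in the opposite order.
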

\begin{proof}
For each $a \in A$, the natural transformation $\mu$ induces a morphism $\beta' \gamma(a) \rightarrow \beta(a)$. Since $\beta'$ is an opfibration, we have a coCartesian morphism $\xi_a: \gamma(a) \rightarrow a'$ in $A'$ mapping via $\beta'$ to 
$\beta' \gamma(a) \rightarrow \beta(a)$. By the universal property of coCartesian arrows, we see that the association $\gamma': a \rightarrow a'$ is actually functorial and that the coCartesian morphisms assemble to 
a natural transformation $\xi: \gamma \rightarrow \gamma'$.
By construction we have $\beta' \gamma' = \beta$ and hence $(\gamma', (\alpha_1' \ast \xi) \nu_1, \dots, (\alpha_n' \ast \xi_n) \nu_n,  \id)$ is an equivalent morphism in which $\mu$ is an identity. 
Similarly the other case is shown. 
\end{proof}

To prove that relations in $\pi_0$ are respected, we can now concentrate on relations between morphisms, where all $\nu_i$ are identities, resp.\@ $\mu$ is an identity. These 
obviously define  a relation for morphisms between the fiber products. 

We could also have used  $\tau_1(\Span(I_1, \dots, I_n; J))$ (without the restriction ${}^F$) in the definition of $\Dia^{\cor}$ and defined composition involving the comma category. This leads only to a bicategory which, however, is equivalent to the present strict one (cf.\@ Corollary~\ref{KOREQSPAN} and the discussion thereafter).

\begin{PAR}
Recall the procedure from \cite[\S 1.3.1]{Cis03} to associate with a pseudo-functor $F: I^{\op} \times J \rightarrow \Dia$, a category 
\[ \xymatrix{
& \int \nabla F = \nabla \int F \ar[dl]_{\alpha} \ar[dr]^\beta \\
I & & J
} \]
such that $\alpha$ is a fibration and $\beta$ is an opfibration. This is done by applying the Grothendieck construction, and its dual, respectively, to the two variables separately (cf.\@ \ref{GROTHCONSTR}, \ref{GROTHCONSTRDUAL}). Explicitly, the category $\int \nabla F$ has the objects $(i, j, X \in F(i,j))$ and the morphisms $(i, j, X \in F(i,j)) \rightarrow (i', j', X' \in F(i',j'))$ are
triples consisting of morphisms $a: i \rightarrow i'$ and $b: j \rightarrow j'$ and a morphism $F(\id_i, b)X \rightarrow F(a, \id_j)X'$. 
The pseudo-functors $F: I^{\op} \times J \rightarrow \Dia$ form a 2-category $\Hom(I^{\op} \times J, \Dia)$ consisting of pseudo-functors, pseudo-natural transformations and modifications. 
\end{PAR}

\begin{PROP}
There is a pair of pseudo-functors
\[ \xymatrix{  \Fun(I_1^{\op} \times \cdots \times I_n^{\op} \times J, \Dia) \ar@<3pt>[rr]^-\Xi && \ar@<3pt>[ll]^-\Pi \Span(I_1, \dots, I_n; J)  } \]
such that there are morphisms in the 2-category of endofunctors of $\Span(I_1, \dots, I_n; J)$
\[ \xymatrix{ \Xi \circ  \Pi  \ar@<3pt>[rr] && \ar@<3pt>[ll] \id_{\Span(I_1, \dots, I_n; J)} } \]
which are inverse to each other up to chains of 2-morphisms, 
and such that there are morphisms in the 2-category of endofunctors of $\Fun(I_1^{\op} \times \cdots \times I_n^{\op} \times J, \Dia)$
\[ \xymatrix{ \Pi \circ  \Xi  \ar@<3pt>[rr] && \ar@<3pt>[ll] \id_{\Fun(I_1^{\op} \times \cdots \times I_n^{\op} \times J, \Dia)} } \]
which are inverse to each other up to chains of 2-morphisms. 
\end{PROP}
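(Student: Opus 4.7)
The plan is to exhibit the two pseudo-functors very concretely and then to show that both round-trips are essentially trivial.

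\textbf{Definition of $\Xi$ and $\Pi$.} Given a pseudo-functor $F: I_1^{\op} \times \cdots \times I_n^{\op} \times J \to \Dia$, I define $\Xi(F)$ to be the two-sided Grothendieck construction
\[ \xymatrix{
 & \int \nabla F \ar[dl]_{\alpha_1 \times \cdots \times \alpha_n} \ar[dr]^{\beta} \\
 I_1 \times \cdots \times I_n & & J
} \]
obtained as in the paragraph recalling Cisinski's construction: objects are tuples $(i_1,\dots,i_n,j,X)$ with $X \in F(i_1,\dots,i_n,j)$; morphisms are given by morphisms of the indices together with a morphism in the target fiber of $F$ after applying the appropriate transition functors. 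By construction $\alpha := (\alpha_1,\dots,\alpha_n)$ is a fibration and $\beta$ is an opfibration, so $\Xi(F)$ lies in $\Span^F$. Pseudo-natural transformations $F \Rightarrow F'$ yield functors $\int\nabla F \to \int\nabla F'$ over $I_1 \times \cdots \times I_n \times J$ (with structure 2-cells being identities), and modifications yield the corresponding natural transformations; this is where one checks pseudo-functoriality of $\Xi$. Conversely, given a span $(A, \alpha_1,\dots,\alpha_n, \beta) \in \Span^F(I_1,\dots,I_n;J)$, I set $\Pi(A)$ to be the pseudo-functor sending $(i_1,\dots,i_n,j)$ to the fibre $A_{(i_1,\dots,i_n),j}$ (i.e.\ the subcategory of $A$ on objects mapping to $(i_1,\dots,i_n,j)$ and morphisms mapping to identities); transition functors are $a^* \cdot b_!$, obtained using that $\alpha$ is a fibration and $\beta$ is an opfibration. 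A 1-morphism of spans induces, by restriction to fibres, a pseudo-natural transformation; a 2-morphism yields a modification.

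\textbf{The round-trip $\Xi \circ \Pi \Rightarrow \id$.} Given $(A,\alpha,\beta) \in \Span^F$, the Grothendieck construction on fibres comes with a canonical comparison functor
\[ \varepsilon_A: \int\nabla \Pi(A) \longrightarrow A, \qquad (i,j,X) \mapsto X, \]
which strictly commutes with $\alpha$ and $\beta$ (so $\nu_k$ and $\mu$ are identities), and hence defines a 1-morphism in $\Span^F$. Essential surjectivity is immediate; fully faithfulness follows from unravelling the definition of morphisms in a two-sided Grothendieck construction and using that $\alpha$ is a fibration, $\beta$ an opfibration (so that morphisms in $A$ factor uniquely as $\text{cart} \circ \text{coCart}$ with suitable parameters). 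Hence $\varepsilon_A$ is an equivalence of categories; a pseudo-inverse $\eta_A: A \to \int\nabla\Pi(A)$ is obtained by choosing, for each $a \in A$, a Cartesian (w.r.t.\ $\alpha$) and a coCartesian (w.r.t.\ $\beta$) arrow representing $a$, and from this one gets the necessary natural isomorphisms $\varepsilon_A \eta_A \cong \id_A$ and $\eta_A \varepsilon_A \cong \id$, which, viewed in $\Span^F$, are precisely 2-morphisms (they are natural transformations over $I_1 \times \cdots \times I_n \times J$).

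\textbf{The round-trip $\Pi \circ \Xi \Rightarrow \id$.} Given $F$, the fibre of $\Xi(F) = \int \nabla F$ over $(i_1,\dots,i_n,j)$ is by definition isomorphic to $F(i_1,\dots,i_n,j)$, and this isomorphism is natural in $(i_1,\dots,i_n,j)$ (the transition functors on the Grothendieck construction side are defined precisely by the transition functors of $F$). Hence $\Pi \circ \Xi$ is in fact isomorphic to the identity pseudo-functor, not merely equivalent.

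\textbf{Main obstacle.} The really routine part is setting up the functors on objects; the delicate point is the coherent bookkeeping of pseudo-natural transformations and modifications --- in particular, that $\varepsilon_A$ and $\eta_A$ assemble into pseudo-natural transformations of endo-pseudo-functors of $\Span^F(I_1,\dots,I_n;J)$, and that the two candidate 2-morphisms relating $\varepsilon_A \eta_A$ and $\id_A$ are modifications, not just pointwise natural transformations. This is essentially a 2-categorical Yoneda-type verification and can be done slot by slot; once organized, the fact that the 2-morphisms in $\Span^F$ allow arbitrary $\nu_i$ and $\mu$ (not only identities) makes the comparison flexible enough that no new coherence is required. The language of the proposition (``inverse up to chains of 2-morphisms'') is used precisely to accommodate the fact that the $\eta_A$'s depend on choices of (co)Cartesian lifts and hence are natural only up to further invertible 2-morphisms in $\Span^F$.
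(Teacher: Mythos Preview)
Your construction of $\Pi$ has a genuine domain problem. The proposition is stated for the \emph{full} 2-category $\Span(I_1,\dots,I_n;J)$, not for $\Span^F$. For an arbitrary span $(A,\alpha_1,\dots,\alpha_n,\beta)$ with no fibration/opfibration hypothesis on $\alpha$ and $\beta$, there is no way to extract a pseudo-functor $(i_1,\dots,i_n,j)\mapsto A_{(i_1,\dots,i_n),j}$ by taking strict fibres: the transition functors $a^*\cdot b_!$ you invoke simply do not exist. So as written, your $\Pi$ is only defined on $\Span^F$, and the proposition is not proved.

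Even restricted to $\Span^F$, your treatment of 1-morphisms is too quick. A 1-morphism $(\gamma,\nu_1,\dots,\nu_n,\mu)$ in $\Span^F$ involves non-trivial 2-cells $\nu_k:\alpha_k\Rightarrow\alpha'_k\gamma$ and $\mu:\beta'\gamma\Rightarrow\beta$, so $\gamma$ does \emph{not} carry the fibre over $(i_1,\dots,i_n,j)$ into the fibre over $(i_1,\dots,i_n,j)$. ``Restriction to fibres'' therefore produces nothing; you would first have to postcompose with Cartesian pullback along the $\nu_k$ and coCartesian pushforward along $\mu$ in the target span, and then check that this really assembles to a pseudo-natural transformation. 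That is doable, but it is exactly the delicate coherence you flag in your last paragraph and then do not carry out.

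The paper sidesteps both issues by defining $\Pi$ via comma categories:
\[
\Pi(A)(i_1,\dots,i_n,j) \;=\; \{(i_1,\dots,i_n)\}\times_{/(I_1\times\cdots\times I_n)} A \times_{/J}\{j\}.
\]
This is functorial in $(i_1,\dots,i_n,j)$ for \emph{any} span, with no (op)fibration hypothesis, and a 1-morphism $(\gamma,\nu_1,\dots,\nu_n,\mu)$ induces functors between these comma categories directly from the data $\nu_k,\mu$. The price is that $\Xi\circ\Pi$ no longer lands on the nose: it sends $(A,\alpha,\beta)$ to $(I_1\times\cdots\times I_n)\times_{/(I_1\times\cdots\times I_n)} A\times_{/J} J$, and the paper relates this to $A$ not by an equivalence but by a \emph{chain of adjunctions} (first strip off the $\times_{/J}J$, then the $(I_1\times\cdots\times I_n)\times_{/(-)}$). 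This is precisely why the statement says ``inverse up to chains of 2-morphisms'' rather than ``equivalence''. Your reading of that phrase --- as accommodating choices of (co)Cartesian lifts --- is not what is meant.
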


\begin{proof}
The pseudo-functor $\Xi$ is defined as follows: 
A pseudo-functor $F \in \Fun(I_1^{\op} \times \cdots \times I_n^{\op} \times J, \Dia)$ is sent to the category $\int \nabla F$ defined above, which comes equipped with a fibration to $I_1 \times \dots \times I_n$ and an opfibration to $J$. 
The fact that these are a fibration, and an opfibration, respectively, does not play any role for this proposition, however.  
A natural transformation $\mu: F \rightarrow G$ is sent to the obvious functor $\widetilde{\mu}: \int \nabla F \rightarrow \int \nabla G$.
A modification $\mu \Rightarrow \mu'$ induces a natural transformation $\widetilde{\mu} \Rightarrow \widetilde{\mu}'$ which whiskered with any of the projections to the $I_k$ or to $J$ gives an identity.

$\Pi$ is defined as follows: 
A correspondence $(A, \alpha_1, \dots, \alpha_n, \beta)$ in $\Span(I_1, \dots, I_n; J)$ is sent to the following functor:
\begin{eqnarray*} 
I_1^{\op} \times \cdots \times I_n^{\op} \times J &\rightarrow & \Dia \\
 (i_1, \dots, i_n, j) &\mapsto& \{(i_1, \dots, i_n)\} \times_{/(I_1 \times \dots \times I_n)} A \times_{/J} \{j\} 
\end{eqnarray*} 
A 1-morphism given by $\gamma: A \rightarrow A'$ and $\nu_1, \dots, \nu_n, \mu$, respectively,  induces functors 
\[ \widetilde{\gamma}(i_1, \dots, i_n; j): \{(i_1, \dots, i_n)\} \times_{/(I_1 \times \dots \times I_n)} A \times_{/J} \{j\}  \rightarrow \{(i_1, \dots, i_n)\} \times_{/(I_1 \times \dots \times I_n)} A' \times_{/J} \{j\} \]
which assemble to a pseudo-natural transformation. A 2-morphism $\mu: \gamma \Rightarrow \gamma'$ induces a natural transformation between the corresponding functors
$\widetilde{\gamma}(i_1, \dots, i_n; j) \Rightarrow \widetilde{\gamma}(i_1, \dots, i_n; j)$ which assemble to a modification. 

We now proceed to construct the required 1-morphisms: 
$\Pi \circ \Xi$ maps a functor $F$ to the functor 
\[ F: (i_1, \dots, i_n, j) \mapsto \{(i_1, \dots, i_n)\} \times_{/(I_1 \times \dots \times I_n)} ( \int \nabla F ) \times_{/J} \{j\}. \]

Pointwise the required natural transformation $\id \rightarrow \Pi \circ \Xi$ is given by
sending an object $X$ of $F(i_1, \dots, i_n; j)$ to the object $(i_1, \dots, i_n, j, X)$ of $\int \nabla F$ together with the various identities $\id_{i_1}, \dots, \id_{i_n}, \id_{j}$. 
Pointwise the required natural transformation $\Pi \circ \Xi \rightarrow \id$ is given by
sending an object $(i_1', \dots, i_n', j', X \in F(i_1', \dots, i_n', j'))$ of $ \int \nabla F $ together with $\alpha_k: i_k \rightarrow i_k'$ and $\beta: j' \rightarrow j$ to
$F(\alpha_1, \dots, \alpha_n; \beta)X \in F(i_1, \dots, i_n; j)$. One easily checks that these natural transformations even constitute an adjunction in the 2-category of endofunctors of $\Fun(I_1^{\op}\times \dots \times I_n^{\op} \times J, \Dia)$.

The pseudo-functor $\Xi \circ \Pi$ is given by 
\[ (A, \alpha_1, \dots, \alpha_n, \beta) \mapsto (I_1 \times \cdots \times I_n \times_{/(I_1 \times \cdots \times I_n)} A \times_{/J} J; \pr_{I_1}, \dots, \pr_{I_n}, \pr_J) \]
together with the various projections. 
First we will construct an adjunction of $\Xi \circ \Pi$ with the pseudo-functor
\[ (A, \alpha_1, \dots, \alpha_n, \beta) \mapsto (I_1 \times \cdots \times I_n \times_{/(I_1 \times \cdots \times I_n)} A ; \pr_{I_1}, \dots, \pr_{I_n}, \beta). \]
In one direction we have the functor which complements an object $(a, \dots)$ by the identity $\id_{\beta(a)}$.
In the other direction we have the forgetful functor, forgetting $\beta(a) \rightarrow j$. 
Those two functors form an adjunction in the 2-category of endofunctors of $\Span(I_1, \dots, I_n; J)$.

Similarly we have an adjunction between
\[ (A, \alpha_1, \dots, \alpha_n, \beta) \mapsto (I_1 \times \cdots \times I_n \times_{/(I_1 \times \cdots \times I_n)} A ; \pr_{I_1}, \dots, \pr_{I_n}, \beta) \]
and the identity
\[ (A, \alpha_1, \dots, \alpha_n, \beta) \mapsto (A, \alpha_1, \dots, \alpha_n, \beta). \]
\end{proof}

Observe that the functor $\Xi$ actually has values in the full subcategory $\Span^F(I_1, \dots, I_n; J)$. 

\begin{KOR}\label{KOREQSPAN}
We have equivalences of categories (cf.\@ \ref{1TRUNC}):
\[ \tau_1(\Fun(I_1^{\op} \times \cdots \times I_n^{\op} \times J, \Dia)) \cong \tau_1(\Span(I_1, \dots, I_n; J)) \cong  \tau_1(\Span^F(I_1, \dots, I_n; J)),  \]
\end{KOR}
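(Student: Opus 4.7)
The corollary is essentially a formal consequence of the preceding proposition, obtained by applying the 1-truncation $\tau_1$ of \ref{1TRUNC} to the 2-categories involved. The plan is to observe that any pseudo-functor between 2-categories descends to an ordinary functor between their 1-truncations, that any pseudo-natural transformation descends to a natural transformation, and that any two pseudo-natural transformations connected by a modification descend to the \emph{same} natural transformation (because in $\tau_1(\mathcal{D})$ a 2-morphism collapses to an equality of the underlying 1-morphisms).

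Applied to the proposition, this yields functors $\tau_1(\Xi)$ and $\tau_1(\Pi)$ between $\tau_1(\Fun(I_1^{\op} \times \cdots \times I_n^{\op} \times J, \Dia))$ and $\tau_1(\Span(I_1, \dots, I_n; J))$. The two pseudo-natural transformations $\Xi \circ \Pi \to \id$ and $\id \to \Xi \circ \Pi$ (resp.\ their analogues for $\Pi \circ \Xi$) become, after truncation, natural transformations whose compositions equal the identity natural transformations, because the chains of 2-morphisms witnessing the inverse relations become equalities of natural transformations in $\tau_1$. Hence $\tau_1(\Xi)$ and $\tau_1(\Pi)$ are mutually inverse equivalences of 1-categories, giving the first isomorphism.

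For the second isomorphism, the key is the observation immediately preceding the corollary: $\Xi$ factors as $\iota \circ \Xi^F$ where $\iota \colon \Span^F(I_1,\dots,I_n;J) \hookrightarrow \Span(I_1,\dots,I_n;J)$ is the full sub-2-category inclusion and $\Xi^F$ lands in $\Span^F$. Since $\iota$ is a full inclusion of 2-categories, $\tau_1(\iota)$ is fully faithful. Essential surjectivity of $\tau_1(\iota)$ follows from the first isomorphism: every object $X \in \Span$ satisfies $X \cong \Xi(\Pi(X))$ in $\tau_1(\Span)$, and $\Xi(\Pi(X))$ lies in $\Span^F$. Thus $\tau_1(\iota)$ is an equivalence.

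There is no real obstacle here; the only point requiring a moment of care is bookkeeping with the fact that $\tau_1$ takes pseudo-naturality squares (which commute up to 2-isomorphism) to strictly commuting squares of 1-morphisms, so that pseudo-natural transformations give honest natural transformations after truncation. Once this is noted, the rest is purely formal.
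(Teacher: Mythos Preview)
Your proof is correct and matches the paper's intended argument. The corollary is stated in the paper without explicit proof precisely because it follows formally from the preceding proposition in the way you describe: truncation $\tau_1$ collapses the chains of 2-morphisms (modifications) to equalities, turning the pseudo-natural transformations into mutually inverse natural isomorphisms, and the observation that $\Xi$ lands in $\Span^F$ gives the second equivalence via essential surjectivity of the full inclusion.
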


Hence we could have defined the 2-multicategory $\Dia^{\mathrm{cor}}$ (as a bimulticategory) using any of these three models for the categories of 1-morphisms. 
The composition of 1-morphisms looks as follows in these three models:

\begin{enumerate}
\item Using $\tau_1(\Span^F(I_1, \dots, I_n; J))$ we get the composition as defined before: 
\begin{gather*}
\vcenter{\xymatrix{ &&&A \ar[dlll] \ar[dl]  \ar[dr] \\ 
I_1 & \cdots & I_n &;& J_i
} }
\circ_i
\vcenter{ \xymatrix{ &&&B \ar[dlll] \ar[dl]  \ar[dr] \\ 
J_1 & \cdots & J_m &;& K
}  } \\
=  \vcenter{ \xymatrix{ &&&&&A \times_{J_i} B \ar[dl] \ar[dlll] \ar[dlllll] \ar[dr]  \ar[drrr] \\ 
J_1 & \cdots & I_1 & \dots & I_n & \dots & J_m &;& K
} }
\end{gather*}
\item Using $\tau_1(\Span(I_1, \dots, I_n; J))$ the composition involves the comma category: 
\begin{gather*}
\vcenter{\xymatrix{ &&&A \ar[dlll] \ar[dl]  \ar[dr] \\ 
I_1 & \cdots & I_n &;& J_i
} }
\circ_i
\vcenter{ \xymatrix{ &&&B \ar[dlll] \ar[dl]  \ar[dr] \\ 
J_1 & \cdots & J_m &;& K
}  } \\
= \vcenter{  \xymatrix{ &&&&&A \times_{/J_i} B \ar[dl] \ar[dlll] \ar[dlllll] \ar[dr]  \ar[drrr] \\ 
J_1 & \cdots & I_1 & \dots & I_n & \dots & J_m &;& K
} }
\end{gather*}
\item Using $\tau_1(\Fun(I_1^{\op} \times \cdots \times I_n^{\op} \times J, \Dia))$ we get for pseudo-functors
\[ F : I_1^{\op} \times \cdots \times I_n^{\op} \times J_i \rightarrow \Dia 
\qquad G : J_1^{\op} \times \cdots \times J_m^{\op} \times K \rightarrow \Dia \]
that
\[ G \circ_i F = \mathrm{hocoend}_{J_i} G \times F, \]
where $\mathrm{hocoend}$ is defined in Definition~\ref{DEFHOCOEND} below. 
\end{enumerate}

All these compositions are compatible with the equivalences of Corollary~\ref{KOREQSPAN}.
However, only using the model $\Span^F(I_1, \dots, I_n; J)$ we get strict associativity and the existence of identities.

\begin{DEF}\label{DEFHOCOEND}
 Let $J$ be a diagram and let $F: J^{\op} \times J \rightarrow \Dia$ be a pseudo-functor. 
We define the diagram $\mathrm{hocoend}_J F$ as the category whose objects are the pairs $(j, x)$ with $j \in J$ and $x \in F(j, j)$ and
whose morphisms $(\alpha, \gamma); (j, x) \rightarrow (j', x')$ are the pairs consisting of a morphisms $\alpha: j \rightarrow j'$ and a morphism $\gamma: F(\id_j, \alpha)x \rightarrow F(\alpha, \id_{j'})x'$. The composition of two morphisms $(\alpha, \gamma); (j, x) \rightarrow (j', x')$ and $(\alpha', \gamma'); (j', x') \rightarrow (j'', x'')$ is defined by $(\alpha', \gamma') \circ (\alpha, \gamma) = (\alpha' \alpha, (F(\alpha, \id_{j''}) \gamma' )  \circ (F(\id_j, \alpha') \gamma))$.
\end{DEF}

\begin{PROP}\label{PROPPSEUDOFUNCTDIA}
\begin{enumerate}
\item There is a pseudo-functor of 2-multicategories
\[ \Dia^{2-\op} \rightarrow \Dia^{\cor} \]
where $\Dia^{2-\op}$ is turned into a 2-multicategory by setting 
\[ \Hom_{\Dia^{2-\op}}(I_1, \dots, I_n; J):= \Hom(I_1 \times \cdots \times I_n, J)^{\op}. \]
\item There is a pseudo-functor of 2-multicategories
\[ \Dia^{1-\op} \rightarrow \Dia^{\cor} \]
where $\Dia^{1-\op}$ is turned into a 2-multicategory by setting 
\[ \Hom_{\Dia^{1-\op}}(I_1, \dots, I_n; J):= \Hom(J, I_1) \times \cdots \times \Hom(J, I_n). \]
In particular for any $I \in \Dia$ there is a natural pseudofunctor of 2-multicategories 
\[ \{\cdot\}  \rightarrow \Dia^{\cor} \]
with value $I$. 
\end{enumerate}
\end{PROP}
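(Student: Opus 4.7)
The plan is to construct both pseudo-functors explicitly through a canonical comma-category correspondence, and to verify pseudo-functoriality by passing to the equivalent $\Fun$-model from Corollary~\ref{KOREQSPAN}. Both pseudo-functors send each object to itself.

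For part (1), to a 1-morphism $f\colon I_1\times\cdots\times I_n\to J$ of $\Dia^{2-\op}$, assign the correspondence with apex $A_f:=(I_1\times\cdots\times I_n)\times_{/J}J$ (the comma category formed with left map $f$ and right map $\id_J$), with left legs the projections of the first factor to each $I_k$, and right leg the projection to $J$. As projections from a comma category, the combined left leg is a fibration and the right leg is an opfibration, so this lies in $\Span^F(I_1,\dots,I_n;J)$. A 2-morphism $\mu\colon f\Rightarrow g$ in $\Dia^{2-\op}$ (equivalently $\mu\colon g\Rightarrow f$ in $\Dia$) induces a functor $A_f\to A_g$ by post-composing the structural morphism $\phi\colon f(i_1,\dots,i_n)\to j$ of an object in $A_f$ with the component $\mu_{(i_1,\dots,i_n)}$; this functor strictly commutes with all projections and therefore represents a 2-morphism in $\Dia^{\cor}$.

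For part (2), assign to a tuple $(f_k\colon J\to I_k)_k$ the correspondence with apex $A_{(f_k)}:=(I_1\times\cdots\times I_n)\times_{/(I_1\times\cdots\times I_n)}J$, where the comma is formed with the identity on the left and $(f_1,\dots,f_n)$ on the right; the legs are the natural projections. This again lies in $\Span^F$. A 2-morphism $(\nu_k)\colon(f_k)\Rightarrow(g_k)$ induces the functor obtained by post-composing each $\phi_k\colon i_k\to f_k(j)$ with $\nu_{k,j}$. The ``in particular'' clause then follows by specialising part (2) to $n=1$ with $f_1=\id_I$, or more directly from the observation that a pseudo-functor from the terminal 2-multicategory is merely the choice of an object together with identity structure.

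Pseudo-functoriality in both cases requires natural 2-isomorphisms $F(g)\circ_i F(f)\cong F(g\circ_i f)$ and $F(\id_I)\cong \id_I$ satisfying the coherences of Definition~\ref{DEFPSEUDOFUN}. For part (1) the composition in $\Span^F$ is the strict fiber product $A_f\times_{J_i}A_g$; a direct calculation yields a comparison functor $A_f\times_{J_i}A_g\to A_{g\circ_i f}$ sending a matched pair of structural data $(\phi,\psi)$ to $\psi\circ g(\id,\dots,\phi,\dots,\id)$, which commutes with all projections and becomes an equivalence after $\tau_1$. The analogous comparison for part (2) is even simpler. The main obstacle is the bookkeeping of the pentagon and unit coherences; I would handle this by passing to the $\Fun$-model of Corollary~\ref{KOREQSPAN}, under which $A_f$ corresponds to the representable pseudo-functor $\Hom_J(f(-),-)$ and $A_{(f_k)}$ to $\prod_k\Hom_{I_k}(-,f_k(-))$. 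The composition formula $G\circ_i F=\mathrm{hocoend}_{J_i}(G\times F)$ from Definition~\ref{DEFHOCOEND} then reduces pseudo-functoriality to the classical co-Yoneda identification, whose coherence is automatic.
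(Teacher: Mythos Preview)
Your construction is exactly the paper's: both pseudo-functors send $f$ (resp.\ $(f_k)$) to the comma-category correspondence with apex $(I_1\times\cdots\times I_n)\times_{/J}J$ (resp.\ $(I_1\times\cdots\times I_n)\times_{/(I_1\times\cdots\times I_n)}J$), and the treatment of 2-morphisms agrees. For pseudo-functoriality the paper argues directly: after simplifying $A_f\times_{J_i}A_g$ to $(I_1\times\cdots\times I_n)\times_{/J_i}(J_1\times\cdots\times J_m)\times_{/K}K$, it exhibits an explicit adjunction in $\Span^F$ between this and $A_{g\circ_i f}$, which yields the required 2-isomorphism in $\tau_1$; coherences are left as ``one checks''. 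Your comparison functor is precisely one half of that adjunction (the other half inserts $j_i=f(i_\bullet)$ with $\phi=\id$), so your argument that it becomes invertible after $\tau_1$ is the same as the paper's, and your detour through the $\Fun$-model and co-Yoneda for the coherence axioms is a legitimate alternative to the paper's bare ``one checks'', though not strictly necessary.
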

\begin{proof}
The functor
\[ \Dia^{2-\op} \rightarrow \Dia^{\cor} \]
is the identity on objects. A 1-morphism $\alpha \in \Hom(I_1 \times \cdots \times I_n, J)$ is mapped to the correspondence
\[ \xymatrix{
&&  I_1 \times \cdots \times I_n \times_{/J} J \ar[lld] \ar[d] \ar[rrd] \\
I_1 & \cdots & I_n  &; & J
} \]
and a 2-morphism $\mu: \alpha \rightarrow \alpha'$ to the morphism $(I_1 \times \cdots \times I_n) \times_{/\alpha', J} J \rightarrow (I_1 \times \cdots \times  I_n) \times_{/\alpha,J} J$ induced by $\mu$. Note that the projections from $I_1 \times \cdots \times I_n \times_{/J} J$ to $I_1 \times \cdots \times I_n$, and to $J$, are respectively an 
opfibration, and a fibration. 

To establish the pseudo-functoriality, we have to show that there is a natural isomorphism of correspondences between
\[ (I_1 \times \cdots \times I_n) \times_{/J_i} J_i \times_{J_i} (J_1 \times \cdots \times J_m) \times_{/K} K  \]
\[ = (I_1 \times \cdots \times I_n) \times_{/J_i} (J_1 \times \cdots \times J_m) \times_{/K} K  \]
and
\[ (J_1 \times \cdots \times J_{i-1} \times I_1 \times \cdots \times I_n \times J_{i+1} \times \cdots \times J_m) \times_{/K} K \]
in $\tau_1(\Span^F(J_1, \dots, J_{i-1}, I_1, \dots, I_n, J_{i+1}, \dots, J_m; K))$. One checks that there is even an adjunction between the two categories which establishes this isomorphism.

The pseudo-functor
\[ \Dia^{1-\op} \rightarrow \Dia^{\cor} \]
sends a multimorphism given by $\{ \alpha_k: J \rightarrow I_k \}$ to the correspondence 
\[ \xymatrix{
&&  I_1 \times \cdots \times I_n \times_{/(I_1 \times \cdots \times I_n)} J \ar[lld] \ar[d] \ar[rrd] \\
I_1 & \cdots & I_n  &; & J
} \]

To establish the pseudo-functoriality, we have to show that there is a natural isomorphism of correspondences between
\[ (I_1 \times \cdots \times I_n) \times_{/(I_1 \times \cdots \times I_n)} J_i \times_{J_i} (J_1 \times \cdots \times J_m) \times_{/(J_1 \times \cdots \times J_m)} K  \]
\[ = (I_1 \times \cdots \times I_n) \times_{/(I_1 \times \cdots \times I_n)} (J_1 \times \cdots \times J_m) \times_{/(J_1 \times \cdots \times J_m)} K  \]
and
\[ (J_1 \times \cdots \times J_{i-1} \times I_1 \times \cdots \times I_n \times J_{i+1} \times \cdots \times J_m) \times_{(J_1 \times \cdots \times J_{i-1} \times I_1 \times \cdots \times I_n \times J_{i+1} \times \cdots \times J_m)} K \]
in $\tau_1(\Span^F(J_1, \dots, J_{i-1}, I_1, \dots, I_n, J_{i+1}, \dots, J_m; K))$. One checks that there is even an adjunction between the two categories which establishes this isomorphism. 

The requested pseudo-functor
\[ \{\cdot\}  \rightarrow \Dia^{2-\op} \]
with value $I$ is given by the composition of the obvious pseudo-functor $\{\cdot\}  \rightarrow \Dia^{1-\op}$, sending the unique multimorphism in $\Hom(\cdot, \dots, \cdot\,; \cdot)$ to $\{\id_I\}_{i=1..n}$, with the previous pseudo-functor
$\Dia^{1-\op} \rightarrow \Dia^{\cor}$.
\end{proof}

\begin{PROP}\label{PROPDIACORMONOIDAL}
The 2-multicategory $\Dia^{\cor}$ is (strictly symmetric) 1-bifibered and (trivially) 2-bifibered over $\{\cdot\}$ hence it is a (strictly symmetric) monoidal 2-category with monoidal structure represented by
\[ I \otimes J = I \times J \]
and internal hom
\[ \mathcal{HOM}(I,J) = I^{\op} \times J \]
with unit given by the final diagram $\{\cdot\}$. 
In particular every object is dualizable w.r.t.\@ the final diagram and the duality functor is $I \mapsto I^{\op}$ on the objects, while on 1-morphisms it is given by the composition of equivalences:
\[  \Hom_{\Dia^{\cor}}(J^{\op}, I^{\op}) \cong \tau_1(\Fun(J \times I^{\op}, \Dia)) = \tau_1(\Fun(I^{\op} \times J, \Dia)) \cong \Hom_{\Dia^{\cor}}(I, J).  \]
\end{PROP}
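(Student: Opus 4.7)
The plan is to leverage Corollary~\ref{KOREQSPAN}, which identifies
\[ \Hom_{\Dia^{\cor}}(I_1,\dots,I_n;J) \simeq \tau_1(\Fun(I_1^{\op}\times\cdots\times I_n^{\op}\times J,\Dia)). \]
Under this identification every claim of the proposition reduces to an elementary identity among (opposites of) Cartesian products in $\Dia$: $(I\times J)^{\op} = I^{\op}\times J^{\op}$, $\{\cdot\}^{\op} = \{\cdot\}$, together with strict associativity and commutativity of $\times$. The assertion that $\Dia^{\cor}$ is 2-bifibered over $\{\cdot\}$ is tautological, because $\{\cdot\}$ carries only identity 2-morphisms, so every 2-morphism of $\Dia^{\cor}$ is automatically (co)Cartesian over $\{\cdot\}$.

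For the monoidal product I will exhibit, for each $n$-tuple $I_1,\dots,I_n$, the candidate coCartesian 1-morphism as the correspondence with apex $I_1\times\cdots\times I_n$, legs the projections on the left, and the identity on the right. Under Corollary~\ref{KOREQSPAN} the induced composition functor
\[ \Hom_{\Dia^{\cor}}(I_1\times\cdots\times I_n;K) \longrightarrow \Hom_{\Dia^{\cor}}(I_1,\dots,I_n;K) \]
corresponds to the literal identity of $\tau_1(\Fun(I_1^{\op}\times\cdots\times I_n^{\op}\times K,\Dia))$, hence is an equivalence; strict symmetry of the resulting monoidal structure is inherited from the strict symmetry of the Cartesian product of diagrams. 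Analogously, the chain of identifications
\[ \Hom_{\Dia^{\cor}}(K,I;J) \simeq \tau_1(\Fun(K^{\op}\times I^{\op}\times J,\Dia)) \simeq \Hom_{\Dia^{\cor}}(K;I^{\op}\times J) \]
exhibits $\mathcal{HOM}(I,J)=I^{\op}\times J$, the Cartesian 1-morphism (the ``evaluation'' in $\Hom_{\Dia^{\cor}}(I,I^{\op}\times J;J)$) being obtained by transporting the identity of $I^{\op}\times J$ across the equivalence; a symmetric argument handles the first slot.

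The unit property of $\{\cdot\}$ is immediate from $\{\cdot\}^{\op}\times K = K$, giving $\Hom_{\Dia^{\cor}}(\{\cdot\};K)\simeq\Hom_{\Dia^{\cor}}(;K)$. Dualizability of every object with $I^{\vee}=I^{\op}$ then follows from the coincidence
\[ \mathcal{HOM}(I,J)=I^{\op}\times J = \mathcal{HOM}(I,\{\cdot\})\otimes J = I^{\vee}\otimes J, \]
which is the standard criterion for $I$ to be dualizable in a closed symmetric monoidal 2-category; the action of the duality functor on 1-morphisms is then forced to be the sequence of equivalences displayed in the statement, arising from the reordering $(J\times I^{\op})^{\op}=J^{\op}\times I$. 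The main obstacle will be the routine but unavoidable bookkeeping that the equivalences of Corollary~\ref{KOREQSPAN} intertwine composition, i.e.\@ that fibre-product composition of correspondences matches the hocoend composition of functors from Definition~\ref{DEFHOCOEND}; this verification, already sketched in the discussion following Corollary~\ref{KOREQSPAN}, is what upgrades the above soft identifications of $\Hom$-sets into honest assertions about the $1$-(op)fibered and monoidal structure of $\Dia^{\cor}$ over $\{\cdot\}$.
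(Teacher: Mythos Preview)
Your proposal is correct and follows essentially the same approach as the paper: both reduce everything to Corollary~\ref{KOREQSPAN}, observing that under the identification with $\tau_1(\Fun(I_1^{\op}\times\cdots\times I_n^{\op}\times J,\Dia))$ the composition with the candidate (co)Cartesian 1-morphisms becomes the identity, and both flag compatibility with composition and preservation of the $\Span^F$-condition as the residual verifications. The paper carries out the internal-Hom side slightly more explicitly (unwinding $\int\nabla F_\xi$), while you invoke the identification more abstractly, but the strategy is the same.
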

\begin{proof}
By Corollary~\ref{KOREQSPAN} we have equivalences 
\begin{equation}\label{eqequiv1} 
\tau_1(\Span(I_1, I_2; J)) \cong \tau_1(\Fun(I_1^{\op} \times I_2^{\op} \times J, \Dia)) 
\end{equation}
and also
\begin{equation}\label{eqequiv3} 
 \tau_1(\Span(I_1 \times I_2; J)) \cong \tau_1(\Fun(I_1^{\op} \times I_2^{\op} \times J, \Dia)). 
\end{equation}
Obviously the composition of (\ref{eqequiv1}) with the inverse of (\ref{eqequiv3}) is isomorphic to the canonical equivalence
\[ \tau_1(\Span(I_1, I_2; J)) \rightarrow \tau_1(\Span(I_1 \times I_2; J))  \]
given by 
\[ \left(\vcenter{\xymatrix{
& & A \ar[lld]_{\alpha_1} \ar[ld]^{\alpha_2} \ar[rd]^\beta &  \\
I_1 & I_2 & ; & J }} \right)
\mapsto \left( \vcenter{ \xymatrix{
&  A \ar[ld]_{(\alpha_1, \alpha_2)} \ar[rd]^\beta &  \\
I_1 \times  I_2 & ; & J }} \right).
\]
 
Furthermore this canonical equivalence preserves the $\Span^F$-subcategories and is compatible with composition, by definition of the composition by fiber products.

Similarly, by Corollary~\ref{KOREQSPAN} again, we have an equivalence 
\begin{equation}\label{eqequiv2}
 \tau_1(\Hom(I_1; I_2^{\op} \times J)) \cong \tau_1(\Fun(I_1^{\op} \times I_2^{\op} \times J, \Dia)). 
 \end{equation}
Explicitly the  equivalence (\ref{eqequiv1}) maps a correspondence
\[ \xymatrix{
& & A \ar[lld]_{\alpha_1} \ar[ld]^{\alpha_2} \ar[rd]^\beta &  \\
I_1 & I_2 & ; & J }
\]
to the functor
\begin{eqnarray*} 
F_\xi: I_1^{\op} \times I_2^{\op} \times J &\rightarrow & \Dia \\
(i_1, i_2, j) & \mapsto &   (i_1, i_2) \times_{/ I_1\times I_2} A  \times_{/J} j 
\end{eqnarray*}
and the  inverse of (\ref{eqequiv2}) maps this to
\[ \xymatrix{
&  \int \nabla F_\xi  \ar[ld]_{} \ar[rd] &  \\
I_1  & ; & I_2^{\op} \times J }
\]
Explicitly the category
\[  \int \nabla F_\xi \]
has objects $(i_1, i_2, j, a, \mu_1, \mu_2, \nu)$ where $\mu_1: i_1 \rightarrow \alpha_1(a) , \mu_2:  i_2 \rightarrow \alpha_2(a) , \nu: \beta(a) \rightarrow j$.
Morphisms $(i_1, i_2, j, a, \mu_1, \mu_2, \nu) \rightarrow (i_1', i_2', j', a', \mu_1', \mu_2', \nu')$ are morphisms
$i_1 \rightarrow i_1', i_2' \rightarrow i_2, j \rightarrow j', a \rightarrow a'$ such that the obvious diagrams commute. 
This again preserves the $\Span^F$-subcategories and is compatible with composition. 
\end{proof}

\begin{PAR}
We can also investigate how the corresponding Cartesian resp.\@ coCartesian morphisms look like: The trivial correspondence
\[ \xymatrix{
&I_1 \times I_2   \ar@{=}[ld]_{} \ar@{=}[rd]_{}  \\
I_1 \times I_2 & ;  & I_1 \times I_2
} \]
corresponds, by the explicit description given in the proof, to the morphism
\[ \xymatrix{
& I_1 \times I_2   \ar[ld]_{} \ar[d]_{} \ar@{=}[rrd] \\
I_1 & I_2 & ; & I_1 \times I_2
} \]
which therefore constitutes the corresponding coCartesian morphism.

The trivial correspondence
\[ \xymatrix{
&I_1^{\op} \times I_2   \ar@{=}[ld]_{} \ar@{=}[rd]_{}  \\
I_1^{\op} \times I_2 & ;   & I_1^{\op} \times I_2
} \]
corresponds (up to 2-isomorphism) to the functor
\begin{eqnarray*} 
I_1 \times I_2^{\op} \times I_1^{\op} \times I_2 & \rightarrow & \Dia \\
 (i_1, i_2', i_1', i_2) & \mapsto & \Hom(i_1', i_1) \times \Hom(i_2', i_2) 
\end{eqnarray*} 
where the image consists of discrete categories.
It corresponds (up to 2-isomorphism) to the 1-morphism
\[ \xymatrix{
& \tw(I_1^{\op}) \times I_2 \times_{/I_2} I_2 \ar[ld]_{} \ar[d]_{} \ar[rrd] \\
I_1 & I_2 \times I_1^{\op} & ; & I_2
} \]
or simply to the 1-morphism
\[ \xymatrix{
& \tw(I_1^{\op}) \times I_2  \ar[ld]_{} \ar[d]_{} \ar[rrd] \\
I_1 & I_2 \times I_1^{\op} & ; & I_2
} \]
which therefore is the corresponding Cartesian  morphism. Here for a category $I$, the category $\tw(I) = \int \Hom_I(-,-)$ is the twisted arrow category. 
In particular, the duality morphism in $\Hom(I, I^{\op}; \cdot)$ is given by the multicorrespondence of diagrams:
\[ \xymatrix{
& \tw(I_1^{\op})  \ar[ld]_{} \ar[d]_{} \ar[rrd] \\
I_1 & I_1^{\op} & ; & \{\cdot\}
} \]
\end{PAR}

\section{The canonical Wirthm\"uller context of a fibered multiderivator}\label{SECTIONWIRTHFIBDER}

In the next two sections it is proven that the axioms of a fibered multiderivator can be encoded as a fibration over the category $\Dia^{\cor}$ defined in Section \ref{SECTIONDIACOR}. 

\begin{PAR}

Recall Definition~\ref{DEFSPAN}, where $\Span(I_1, \dots, I_n; J)$ was defined. 
Let $\SSS$ be a pre-multiderivator (cf.\@ \cite[Definiton~1.2.1.]{Hor15}). 
Such a pre-multiderivator defines, for each tupel of diagrams $I_1, \dots, I_n; J$ in  $\Dia$ and objects $S_i \in \SSS(I_i)$ and $T \in \SSS(J)$, a 2-functor
\[ \Span_{\SSS}: \Span(I_1, \dots, I_n; J)^{1-\op} \rightarrow \mathcal{SET}. \]
where $\mathcal{SET}$ is considered a 2-category with only identities as 2-morphisms. 
$\Span_{\SSS}$ maps a multicorrespondence of diagrams in $\Dia$
\[ \xymatrix{ &&&A \ar[dlll]_{\alpha_1} \ar[dl]^{\alpha_n}  \ar[dr]^{\beta} \\ 
I_1 & \cdots & I_n &;& J
} \]
to the set
\[ \Hom_{\SSS(A)}(\alpha_1^*S_1, \dots, \alpha_n^*S_n; \beta^* T), \]
and maps a 1-morphism $(\gamma, \nu_1, \dots, \nu_n, \mu)$ to the map
\[ \rho \mapsto \SSS(\mu)(T) \circ (\gamma^*\rho) \circ (\SSS(\nu_1)(S_1), \dots, \SSS(\nu_n)(S_n)). \]
It is immediate from the axioms of a pre-multiderivator that this defines a 2-functor, in particular that it sends 1-morphisms which are connected by a 2-morphism to the same map.
\end{PAR}

\begin{DEF}
Let $S_i \in \SSS(I_i)$, $T \in \SSS(J)$ be objects and define 
$\Span_{\SSS}((I_1, S_1), \dots, (I_n, S_n); (J, T))$ be the strict 2-category obtained from the pseudo-functor $\Span_{\SSS}$ via the Grothendieck construction \ref{GROTHCONSTR}.
Explicitly: 
\begin{enumerate}
\item The objects are correspondences
\[ \xymatrix{ &&&A \ar[dlll]_{\alpha_1} \ar[dl]^{\alpha_n}  \ar[dr]^{\beta} \\ 
I_1 & \cdots & I_n &; & J
} \]
together with a 1-morphism 
\[ \rho \in \Hom(\alpha_1^*S_1, \dots, \alpha_n^*S_n; \beta^*T) \]
in $\SSS(A)$. 

\item The 1-morphisms $(A, \alpha_1, \dots, \alpha_n, \beta, \rho) \rightarrow (A', \alpha_1', \dots, \alpha_n', \beta', \rho')$ are tuples
$(\gamma, \nu_1, \dots, \nu_n, \mu)$, where 
 $\gamma: A \rightarrow A'$ is a functor,  $\nu_i$ is a natural transformation 
\[ \xymatrix{
A \ar[rd]_{\alpha_i} \ar[rr]^{\gamma} & \ar@{}[d]|-{\overset{\nu_i}{\Rightarrow}} & A' \ar[ld]^{\alpha_i'}  \\
&I_i
} \]
and $\mu$ is a natural transformation 
\[ \xymatrix{
A \ar[rd]_{\beta} \ar[rr]^{\gamma} & \ar@{}[d]|-{\overset{\mu}{\Leftarrow}}  & A' \ar[ld]^{\beta'}  \\
&J
} \]
such that the diagram
\begin{equation} \label{eqcommdiaspand}
\vcenter{ \xymatrix{
(\gamma^*(\alpha_1')^*S_1, \dots, \gamma^*(\alpha_n')^*S_n) \ar[r]^-{\gamma^*\rho'}& \gamma^*(\beta')^*T \ar[d]^-{\SSS(\mu)} \\
(\alpha_1^*S_1, \dots, \alpha_n^*S_n) \ar[r]^-{\rho}  \ar[u]^-{(\SSS(\nu_1), \dots, \SSS(\nu_n))}  & \beta^*T
} }
\end{equation}
commutes. 

\item The 2-morphisms are the natural transformations
$\eta: \gamma \Rightarrow \gamma'$ such that $(\alpha_i' \ast \eta) \circ \nu_i  = \nu_i'$ and $\mu' \circ (\beta' \ast \eta)   = \mu$.
\end{enumerate}
We again define the full subcategory $\Span^F_{\SSS}$ insisting that $\alpha_1 \times \cdots \times \alpha_n: A \rightarrow I_1 \times \cdots \times I_n$ is a fibration and $\beta$ is an opfibration.
\end{DEF}

\begin{LEMMA}\label{LEMMASPAN1}
Let $p: \DD \rightarrow \SSS$ be a strict morphism of pre-multiderivators (cf.\@ \cite[Defintion 1.2.1]{Hor15}), and let $I_1, \dots, I_n, J$ be diagrams in $\Dia$, let $\mathcal{E}_i$ be objects in $\DD(I_i)$ lying over $S_i$ and
$\mathcal{F}$ an object in $\DD(J)$ lying over $T$. 

Consider the strictly commuting diagram of 2-categories and strict 2-functors
\[ \xymatrix{
\Span^F_{\DD}((I_1, \mathcal{E}_1), \dots, (I_n, \mathcal{E}_n); (J,\mathcal{F})) \ar@{^{(}->}[r] \ar[d] & \Span_{\DD}((I_1, \mathcal{E}_1), \dots, (I_n, \mathcal{E}_n); (J,\mathcal{F})) \ar[d] \\
\Span^F_{\SSS}((I_1, S_1), \dots, (I_n, S_n); (J,T)) \ar@{^{(}->}[r] \ar[d] & \Span_{\SSS}((I_1, S_1), \dots, (I_n, S_n); (J,T)) \ar[d] \\
\Span^F(I_1, \dots, I_n; J) \ar@{^{(}->}[r] &  \Span(I_1, \dots, I_n; J)
} \]
\begin{enumerate}
\item All vertical 2-functors are 1-fibrations and 2-bifibrations with discrete fibers. 
\item Every object in a 2-category on the right hand side is in the image of the corresponding horizontal 2-functor {\em up to a chain of adjunctions}. 
\end{enumerate}
\end{LEMMA}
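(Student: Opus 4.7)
The proof splits naturally into (1) and (2), and for (1) the key observation driving everything is that the strict 2-fibers of each vertical functor are discrete. My plan is to verify this first, then use it as a mechanical tool both for the fibration/bifibration claims and for transferring adjunctions in part (2).

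For discreteness of fibers, I would unpack the definitions: an object of the fiber over $(A,\alpha_1,\dots,\alpha_n,\beta)$ is a choice of $\rho\in\Hom_{\SSS(A)}(\alpha_1^*S_1,\dots,\alpha_n^*S_n;\beta^*T)$ (a set), a 1-morphism in the fiber must have $\gamma=\id_A$, $\nu_i=\id$, $\mu=\id$, so the commutativity of (\ref{eqcommdiaspand}) collapses to $\rho=\rho'$, and a 2-morphism must be $\eta=\id_{\id_A}$ because it has to project to an identity. Hence fibers are discrete. For the 1-fibration property, given a downstairs 1-morphism $(\gamma,\nu_i,\mu)$ and an object $(\rho')$ above the target, the formula
\[ \rho := \SSS(\mu)(T)\circ \gamma^*\rho'\circ(\SSS(\nu_1)(S_1),\dots,\SSS(\nu_n)(S_n)) \]
imposed by diagram (\ref{eqcommdiaspand}) produces the unique possible source, and by discreteness of fibers such a unique lift is automatically Cartesian. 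For the 2-bifibration property, a 2-morphism $\eta\colon (\gamma,\nu_i,\mu)\Rightarrow(\gamma',\nu_i',\mu')$ downstairs lifts trivially upstairs because 2-morphisms carry no extra data involving $\rho,\rho'$; the only thing to check is that the upstairs diagram (\ref{eqcommdiaspand}) for the other 1-morphism also holds with the same $\rho,\rho'$, which follows by a direct computation using naturality of $\eta^*$ together with the compatibility relations $(\alpha_i'\ast\eta)\circ\nu_i=\nu_i'$ and $\mu'\circ(\beta'\ast\eta)=\mu$. This yields both 2-fibration and 2-opfibration at once.

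For (2), the idea is to recycle the two adjunctions already constructed in the proof of the proposition preceding Corollary~\ref{KOREQSPAN}. Starting from an arbitrary correspondence $(A,\alpha_1,\dots,\alpha_n,\beta)\in\Span(I_1,\dots,I_n;J)$, one first replaces it by $(A\times_{/J}J,\alpha_i\pr_A,\pr_J)$, where the opfibration $\pr_J$ is produced by the classical Grothendieck-construction factorization of $\beta$; then by $(I_1\times\cdots\times I_n\times_{/I_1\times\cdots\times I_n}A\times_{/J}J,\pr_{I_i},\pr_J)$, where the left-hand projection is now a fibration. Each replacement comes with an adjoint pair in $\Span(I_1,\dots,I_n;J)$ whose components and unit/counit are the canonical inclusions and projections described in the earlier proof. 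To promote these adjunctions to $\Span_{\SSS}$ and $\Span_{\DD}$, I would use the 1-fibration property from (1): pulling the chosen $\rho$ back through the Cartesian lifts of the adjunction data produces compatible $\rho'$ on the replacement, and the upstairs unit and counit 2-cells are forced by the discreteness of fibers (no choice is involved once the downstairs data is fixed). Thus the resulting upstairs adjunction sits over the downstairs one and witnesses the "up to a chain of adjunctions" statement.

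\textbf{Main obstacle.} The routine parts are the fiber-discreteness and the formula for the Cartesian lift; the only genuinely subtle step is verifying that a downstairs 2-morphism $\eta$ automatically preserves the compatibility (\ref{eqcommdiaspand}) with respect to the target 1-morphism, since this is where the interplay between the pre-multiderivator structure of $\SSS$ and the 2-morphism constraints on $\nu_i,\mu$ must be reconciled; once this naturality computation is done, everything else (including the lifting of the two adjunctions in part (2)) is forced by the discreteness of fibers and requires no further choices.
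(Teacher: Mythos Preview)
Your proposal is correct and follows essentially the same approach as the paper. For part (1) you are more explicit than the paper (which simply invokes the Grothendieck construction), and for part (2) your direct lifting argument using discreteness of fibers is exactly what the paper packages separately as Lemma~\ref{LEMMAAUX} and then applies to the same $A\leftrightarrow A\times_{/J}J$ and $A\leftrightarrow I_1\times\cdots\times I_n\times_{/I_1\times\cdots\times I_n}A$ adjunctions.
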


\begin{proof}
1.\@ follows directly from the definition of the corresponding categories by a Grothendieck construction.
2.\@ is a refinement of \ref{KOREQSPAN} proved as follows. 
We  first embed the left hand side category, say $\Span_{\SSS}^F((I_1, S_1), \dots, (I_n, S_n); (J,T))$, into the full subcategory of $\Span_{\SSS}((I_1, S_1), \dots, (I_n, S_n); (J,T))$ consisting of objects $(A, \alpha_1, \dots, \alpha_n, \beta, \rho)$, in which $\beta$ is an opfibration but the $\alpha_i$ are arbitrary. 
We will show that every object is connected by an adjunction with an object of this bigger subcategory. By a similar argument one shows that this holds also for the second inclusion. 

Consider an arbitrary correspondence $\xi'$ of diagrams in $\Dia$
 \[ \xymatrix{
& & & A \ar[rd]^{\beta} \ar[ld]^{\alpha_n}  \ar[llld]_{\alpha_1} \\
I_1 & \cdots & I_n & ; & J
} \]
and the 1-morphisms in $\Span(I_1, \dots, I_n; J)$
 
 \[ 
 \xymatrix{
& & & A  \times_{/J} J \ar[rd]^{\pr_2} \ar[ld]^{\alpha_n\pr_1} \ar[llld]_{\alpha_1\pr_1} \ar[dd]_{\pr_1} \\
I_1 & \cdots & I_n & \ar@{}[r]|-{\overset{\mu}{\Rightarrow}} & J \\
& & & A  \ar[ru]_{\beta} \ar[lu]_{\alpha_n} \ar[lllu]^{\alpha_1} 
}  \quad  \xymatrix{
& & & A  \ar[rd]^{\beta} \ar[ld]^{\alpha_n} \ar[llld]_{\alpha_1} \ar[dd]^{\Delta}   \\
I_1 & \cdots & I_n &  & J \\
& & & A  \times_{/J} J \ar[ru]_{\pr_2} \ar[lu]_{\alpha_n\pr_1} \ar[lllu]^{\alpha_1\pr_1} 
} \]

One easily checks that $\pr_1 \circ \Delta = \id_A$ and that the obvious 2-morphism  $\Delta \circ \pr_1 \Rightarrow \id_{A\times_{/J} J}$ induced by $\mu$
define an adjunction in the 2-category $\Span(I_1, \dots, I_n; J)$. 
Using Lemma~\ref{LEMMAAUX} below, we get a corresponding adjunction also in the 2-category $\Span_{\SSS}((I_1, S_1), \dots, (I_n, S_n); (J, T))$.
\end{proof}

\begin{LEMMA}\label{LEMMAAUX}
Let $\mathcal{D} \rightarrow \mathcal{S}$ be a 1-fibration and 2-(op)fibration of 2-categories with 1-categorical fibers.
Given  an adjunction in $\mathcal{S}$
\[ \xymatrix{ S \ar@/^10pt/[rr]^F & & \ar@/^10pt/[ll]^G T  } \]
with counit $G \circ F = \id_S$ being the identity and unit $F \circ G \Rightarrow \id_T$, 
for any object $\mathcal{E} \in \mathcal{D}_S$ there is an adjunction 
\[ \xymatrix{  \mathcal{E} \ar@/^10pt/[rr]^{\widetilde{F}} & & \ar@/^10pt/[ll]^{\widetilde{G}} \mathcal{F}  } \]

in $\mathcal{D}$, lying over the previous one, where $\widetilde{F}$ and $\widetilde{G}$ are Cartesian. 
\end{LEMMA}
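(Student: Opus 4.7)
The plan is to lift the adjunction in $\mathcal{S}$ to one in $\mathcal{D}$ by choosing Cartesian lifts and exploiting their universal properties. First, using that $p$ is a 1-fibration, I pick a Cartesian 1-morphism $\widetilde{G}:\mathcal{F}\to\mathcal{E}$ over $G$; this defines the object $\mathcal{F}\in\mathcal{D}_T$. I then choose a Cartesian lift $\widetilde{F}_0:\mathcal{E}'\to\mathcal{F}$ of $F$ with target $\mathcal{F}$. By Lemma~\ref{LEMMACOMPOSCART} the composite $\widetilde{G}\circ\widetilde{F}_0$ is Cartesian over $GF=\id_S$ with target $\mathcal{E}$, and so is $\id_{\mathcal{E}}$ (Lemma~\ref{LEMMAEQUIVALENCECARTCOCART}); the uniqueness of Cartesian lifts (Lemma~\ref{LEMMAREPREQUIV} applied as in Definition~\ref{DEFPUSHFWD}) yields an equivalence $\alpha:\mathcal{E}'\xrightarrow{\sim}\mathcal{E}$ in the fiber $\mathcal{D}_S$ together with a 2-isomorphism $\widetilde{G}\widetilde{F}_0\cong\alpha$. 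Setting $\widetilde{F}:=\widetilde{F}_0\circ\alpha^{-1}:\mathcal{E}\to\mathcal{F}$ produces a Cartesian 1-morphism (a composite of a Cartesian morphism and an equivalence, by Lemmas~\ref{LEMMAEQUIVALENCECARTCOCART} and~\ref{LEMMACOMPOSCART}) together with a canonical 2-iso $\widetilde{G}\widetilde{F}\cong\id_{\mathcal{E}}$ that realises the trivial part of the adjunction coming from $GF=\id_S$.

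Second, I lift the non-trivial 2-morphism $\eta:FG\Rightarrow\id_T$ to a 2-morphism $\widetilde{\eta}:\widetilde{F}\widetilde{G}\Rightarrow\id_{\mathcal{F}}$ by exploiting that $\widetilde{G}$ is Cartesian: the square
\[
\xymatrix{
\Hom_{\mathcal{D}}(\mathcal{F},\mathcal{F})\ar[rr]^{\widetilde{G}\circ-}\ar[d] && \Hom_{\mathcal{D}}(\mathcal{F},\mathcal{E})\ar[d] \\
\Hom_{\mathcal{S}}(T,T)\ar[rr]^{G\circ-} && \Hom_{\mathcal{S}}(T,S)
}
\]
from Definition~\ref{DEFCART} is 2-Cartesian. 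Thus a 2-morphism $\widetilde{F}\widetilde{G}\Rightarrow\id_{\mathcal{F}}$ lying over $\eta$ corresponds bijectively to a 2-morphism $\gamma:\widetilde{G}\widetilde{F}\widetilde{G}\Rightarrow\widetilde{G}$ in $\mathcal{D}$ projecting to $G\ast\eta$. The triangle identity in $\mathcal{S}$ forces $G\ast\eta=\id_G$, and whiskering the canonical iso $\widetilde{G}\widetilde{F}\cong\id_{\mathcal{E}}$ with $\widetilde{G}$ on the right supplies exactly such a $\gamma$ over $\id_G$; the 2-Cartesian property then produces $\widetilde{\eta}$.

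Finally, I verify the two triangle identities for $(\widetilde{F},\widetilde{G},\widetilde{\eta})$. One holds tautologically: $\widetilde{G}\ast\widetilde{\eta}=\gamma$ is the whiskered iso by construction. The other, asserting that $\widetilde{\eta}\ast\widetilde{F}$ equals the canonical iso $\widetilde{F}\widetilde{G}\widetilde{F}\cong\widetilde{F}$, requires comparing two 2-morphisms in $\Hom_{\mathcal{D}}(\mathcal{E},\mathcal{F})$ both lying over $\eta\ast F=\id_F$. Here the 1-categorical fibers hypothesis enters via Lemma~\ref{LEMMACARTSQUAREISOFIB}.3 applied to the 2-Cartesian square attached to a Cartesian lift of $F$: the fiber of $\Hom_{\mathcal{D}}(\mathcal{E},\mathcal{F})\to\Hom_{\mathcal{S}}(S,T)$ over $\id_F$ is equivalent to a Hom-set in the 1-category $\mathcal{D}_S$, hence is discrete, so the two 2-morphisms must coincide. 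I expect the main obstacle to be precisely this last step: the second triangle identity is not forced by the 2-Cartesian construction alone and essentially requires the 1-categorical fibers hypothesis, whereas the rest of the argument is a largely mechanical assembly of Cartesian lifts and 2-Cartesian universal properties.
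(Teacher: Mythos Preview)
Your proof is correct and takes a genuinely different route from the paper's. The paper invokes Proposition~\ref{PROPGROTHCONSTR} to identify $\mathcal{D}$ with the Grothendieck construction of a pseudo-functor $\Psi:\mathcal{S}^{1-\op}\to\mathcal{CAT}$, then writes down $\widetilde{G}=(G,\id_{G^\bullet\mathcal{E}})$ and $\widetilde{F}=(F,\eta(\mathcal{E}))$ explicitly as pairs and leaves the verification of the triangle identities to the reader. Your approach is more intrinsic: you work directly with the universal property of Cartesian 1-morphisms and the 2-Cartesian squares they induce, never passing through the pseudo-functor description. This has the advantage of pinpointing exactly where the 1-categorical fibers hypothesis is consumed---namely in forcing the second triangle identity, since without it $\Hom_{\mathcal{D},F}(\mathcal{E},\mathcal{F})$ need not be discrete and the two candidate 2-morphisms $\widetilde{\eta}\ast\widetilde{F}$ and $\widetilde{F}\ast\sigma$ could a priori differ. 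In the paper's argument the hypothesis is already absorbed into the applicability of Proposition~\ref{PROPGROTHCONSTR}, so it is used earlier and less visibly. One minor wording issue: your phrase ``the fiber of $\Hom_{\mathcal{D}}(\mathcal{E},\mathcal{F})\to\Hom_{\mathcal{S}}(S,T)$ over $\id_F$'' should read ``over $F$''; you mean the fiber category $\Hom_{\mathcal{D},F}(\mathcal{E},\mathcal{F})$, whose morphisms lie over $\id_F$.
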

\begin{proof}
We concentrate on the 2-opfibered case and may assume by Proposition~\ref{PROPGROTHCONSTR} that $\mathcal{D}$ is equal to the Grothendieck construction applied to a pseudo-functor $\Psi: \mathcal{S}^{1-\op} \rightarrow \mathcal{CAT}$. 
We then have corresponding pullback functors $F^\bullet:=\Psi(F)$, $G^\bullet:=\Psi(G)$ and a 2-isomorphism $\eta:  \id_{\Psi(S)} \cong F^\bullet \circ G^\bullet$ and
a 2-morphism $\mu:  G^\bullet \circ F^\bullet \Rightarrow \id_{\Psi(T)} $ given by the pseudo-functoriality and the contravariant functoriality on 2-morphisms. 

We define $\widetilde{G}:=(G, \id_{G^\bullet \mathcal{E}}): G^\bullet \mathcal{E} \rightarrow \mathcal{E}$, the canonical Cartesian morphism, and
 $\widetilde{F}:=(F, \eta(\mathcal{E})): \mathcal{E} \rightarrow G^\bullet \mathcal{E}$, which is Cartesian as well, $\eta(\mathcal{E})$ being an isomorphism. 
 There is a 2-isomorphism $\widetilde{G} \circ \widetilde{F}  \cong \id_{\mathcal{E}}$, and a 2-morphism $ \widetilde{F} \circ \widetilde{G} \rightarrow \id_{G^\bullet \mathcal{E}}$ given by $\mu(G^\bullet \mathcal{E})$. One checks that those define unit and counit of an adjunction again. 
 
 In the 2-fibered case we set $\widetilde{F}:=(F, \eta(\mathcal{E})^{-1}): \mathcal{E} \rightarrow G^\bullet \mathcal{E}$ and may reason analogously. 
 \end{proof}

\begin{LEMMA}\label{PAREQSPANFIBERED}
Let $p: \DD \rightarrow \SSS$ be a morphism of (lax/oplax) 2-pre-multiderivators. Consider the following strictly commuting diagram of functors obtained from the one of Lemma~\ref{LEMMASPAN1} by 1-truncation (\ref{1TRUNC}):  
\[ \xymatrix{
\tau_1(\Span^F_{\DD}((I_1, \mathcal{E}_1), \dots, (I_n, \mathcal{E}_n); (J,\mathcal{F}))) \ar@{^{(}->}[r] \ar[d] & \tau_1(\Span_{\DD}((I_1, \mathcal{E}_1), \dots, (I_n, \mathcal{E}_n); (J,\mathcal{F}))) \ar[d] \\
\tau_1(\Span^F_{\SSS}((I_1, S_1), \dots, (I_n, S_n); (J,T))) \ar@{^{(}->}[r] \ar[d] & \tau_1(\Span_{\SSS}((I_1, S_1), \dots, (I_n, S_n); (J,T))) \ar[d] \\
\tau_1( \Span^F(I_1, \dots, I_n; J)) \ar@{^{(}->}[r] & \tau_1( \Span(I_1, \dots, I_n; J))
} \]
\begin{enumerate}
\item The horizontal functors are equivalences. 
\item All vertical morphisms are fibrations with discrete fibers. Furthermore the horizontal functors map Cartesian morphisms to Cartesian morphisms. 
\end{enumerate}

\end{LEMMA}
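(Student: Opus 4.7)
The plan is to deduce both parts from Lemma~\ref{LEMMASPAN1}. The key observation is that after applying $\tau_1$, any adjunction $F \dashv G$ in a 2-category becomes a pair of mutually inverse isomorphisms: the unit $\eta: \id \Rightarrow GF$ and counit $\varepsilon: FG \Rightarrow \id$ witness the identifications $[GF] = [\id]$ and $[FG] = [\id]$ in the corresponding sets $\pi_0(\Hom(-,-))$. Hence a chain of adjunctions becomes a chain of isomorphisms in the 1-truncated categories.

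For part~1, essential surjectivity of each horizontal functor follows directly: by Lemma~\ref{LEMMASPAN1}, part~2, every object on the right is connected to one in the image by a chain of adjunctions, which yields an isomorphism after truncation. Full faithfulness is automatic, because $\Span^F$ and its decorated variants are by definition full sub-2-categories, so each horizontal inclusion is an isomorphism on Hom-categories and therefore a bijection on their $\pi_0$.

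For part~2, first observe that the vertical 2-functors are, by Lemma~\ref{LEMMASPAN1}, part~1, 1-fibrations of 2-categories with discrete fibers. After $\tau_1$ this becomes a fibration of 1-categories with discrete fibers: the 2-categorical Cartesianity of a 1-morphism asserts that a certain functor of Hom-categories is an equivalence, and applying $\pi_0$ yields exactly the 1-categorical universal property; discreteness of the fibers passes to the truncation tautologically. For the claim that horizontal functors preserve Cartesian morphisms, note that given a Cartesian morphism $\xi$ on the left, any test object on the right is isomorphic to one in the image of the horizontal functor by part~1, and for such a test object the relevant Hom-sets coincide by fullness of the inclusion; the universal property defining Cartesianity therefore transports from the left to the right.

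The only subtle point is to ensure that the adjunctions produced in the proof of Lemma~\ref{LEMMASPAN1} for the top two rows lie \emph{over} their respective bases, so that the resulting isomorphisms in $\tau_1$ are compatible with the vertical morphisms. This is built into the construction of Lemma~\ref{LEMMAAUX}, which explicitly lifts an adjunction along a 1-fibration with 1-categorical fibers to a lying-over adjunction whose counit/unit project correctly. I do not expect any further obstacle.
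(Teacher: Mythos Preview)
Your proof is correct and follows essentially the same approach as the paper's: part~1 via Lemma~\ref{LEMMASPAN1}(2) and the observation that adjunctions become isomorphisms under $\tau_1$, and part~2 via the general principle that a 1-fibration and 2-isofibration with discrete fibers truncates to an ordinary fibration. You supply more detail than the paper does---in particular your explicit argument for why the horizontal equivalences preserve Cartesian morphisms, which the paper leaves entirely to the reader---and your final paragraph on lifting the adjunctions via Lemma~\ref{LEMMAAUX} is exactly how the paper's proof of Lemma~\ref{LEMMASPAN1} handles the decorated rows, so that concern is already taken care of.
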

\begin{proof}
The horizontal morphisms are equivalences by definition of the truncation and Lemma~\ref{LEMMASPAN1}, 2. 
If we have a 1-fibration and 2-isofibration of 2-categories $\mathcal{D} \rightarrow \mathcal{C}$ with
{\em discrete} fibers  then the truncation $\tau_1(\mathcal{D}) \rightarrow \tau_1(\mathcal{C})$ is again fibered (in the 1-categorical sense). Hence the second assertion follows from Lemma~\ref{LEMMASPAN1}, 1.
\end{proof}

\begin{DEF}\label{PARDEFDIACORD}
We define a 2-multicategory $\Dia^{\cor}(\SSS)$ with a functor (with 1-categorical fibers) 
\[ \Dia^{\cor}(\SSS) \rightarrow \Dia^{\cor} \] 
as follows:
\begin{enumerate}
\item The objects of $\Dia^{\cor}(\SSS)$ are the pairs $(I, S)$ with $I \in \Dia$ and $S \in \SSS(I)$.
\item The  category $\Hom_{\Dia^{\cor}(\SSS)}((I_1, S_1), \dots, (I_n, S_n); (J, T))$ of 1-morphisms of $\Dia^{\cor}(\SSS)$ is the truncated category
$\tau_1(\Span_{\SSS}^F((I_1, S_1), \dots, (I_n, S_n); (J, T)))$. 
Composition is defined by composition in $\Dia^{cor}$, i.e.\@ by the fiber product
 
\[ \xymatrix{
&&&&& A \times_{J_i} B  \ar[lld]_{\pr_1} \ar[rrd]^{\pr_2} \\
&&& A \ar[llld] \ar[ld] \ar[rrrd]^{\beta_A} &&&&  B \ar[llld] \ar[ld]^{\alpha_{B,i}}  \ar[rd] \ar[rrrd] \\
I_1 & \cdots & I_n & ; & J_1 & \cdots & J_i & \cdots & J_m & ; & K
} \]
and $\rho_A: \alpha_{A,1}^* S_1, \dots, \alpha_{A,n}^* S_n \rightarrow \beta_A^* T_i$ is composed with
$\rho_B: \alpha_{B,1}^* T_1, \dots, \alpha_{B,m}^* T_m \rightarrow \beta_B^* U$ to the morphism
 \[ (\pr_2^* \rho_B) \circ_i (\pr_1^* \rho_A). \]
\end{enumerate}
\end{DEF}

\begin{BEM}\label{SSS2FIB}
By definition $\Dia^{\cor}(\SSS) \rightarrow \Dia^{\cor}$ has 1-categorical fibers and, by Lemma~\ref{PAREQSPANFIBERED}, it is 2-fibered over $\Dia^{\cor}$. 
In a subsequent article \cite[Section 4]{Hor16} we generalize this definition to pre-2-multiderivators. 
\end{BEM}

\begin{PAR}\label{PAREMBEDDING1}
Let $\SSS$ be a pre-multiderivator. 
Recall the definition of $\Dia(\SSS)$ from \cite[\S 1.6]{Hor15}:
\begin{enumerate}
\item The objects of $\Dia(\SSS)$ are the pairs $(I, S)$ where $I \in \Dia$ and $S \in \SSS(S)$. 
\item The 1-morphisms in $\Hom_{\Dia(\SSS)}((I, S); (J, T))$ are pairs $(\alpha, f)$, where $\alpha: I \rightarrow J$ is a functor in $\Dia$ together with a morphism
\[ f:  S \rightarrow  \alpha^*T. \]
\item The 2-morphisms $(\alpha, f) \Rightarrow (\alpha', f')$ are given by natural transformations $\delta: \alpha \rightarrow \alpha'$ such that the diagram
\[ \xymatrix{
\alpha^* S  \ar[r]^f  \ar[d]_{\SSS(\delta)} & T   \\
(\alpha')^* S \ar[ru]_{f'}  & 
} \]
commutes. 
\end{enumerate}
This category is 1-fibered and 2-fibered over $\Dia$.
There is a commutative diagram of pseudo-functors of 2-categories (not of 2-multicategories)
\[ \xymatrix{
\Dia(\SSS)^{2-\op} \ar[r] \ar[d] & \Dia^{\cor}(\SSS) \ar[d] \\
\Dia^{2-\op} \ar[r] & \Dia^{\cor}
} \]
where the bottom horizontal pseudo-functor is the one of  Proposition~\ref{PROPPSEUDOFUNCTDIA}, 1.
\end{PAR}

\begin{PAR}\label{PAREMBEDDING2}
Let $\SSS$ be a pre-multiderivator. 
Recall the definition of $\Dia^{\op}(\SSS)$ from  \cite[\S 1.6]{Hor15}. We define here the category $\Dia^{\op}(\SSS)^{1-\op}$ even as a 2-multicategory:
\begin{enumerate}
\item The objects of $\Dia^{\op}(\SSS)^{1-\op}$ are the pairs $(I, S)$ where $I \in \Dia$ and $S \in \SSS(S)$. 
\item The 1-morphisms in $\Hom_{\Dia^{\op}(\SSS)^{1-\op}}((I_1, S_1), \dots, (I_n, S_n); (J, T))$ are collections $\{ \alpha_i: J \rightarrow I_i \}$ together with a morphism
\[ f \in \Hom_{\SSS(J)}( \alpha_1^*S_1, \dots, \alpha_n^*S_n;  T). \]
\item The 2-morphisms are given by collections $\{ \delta_i: \alpha_i \rightarrow \alpha_i' \}$ such that the diagram
\[ \xymatrix{
(\alpha_1^* S_1, \dots,  \alpha_n^*S_n )  \ar[r] \ar[d] & T   \\
((\alpha_1')^* S_1, \dots,   (\alpha_n')^*S_n)  \ar[ur] & 
} \]
commutes. 
\end{enumerate}
There is a commutative diagram of pseudo-functors of 2-multicategories
\[ \xymatrix{
\Dia^{\op}(\SSS)^{1-\op} \ar[r] \ar[d] & \Dia^{\cor}(\SSS) \ar[d] \\
\Dia^{1-\op} \ar[r] & \Dia^{\cor}
} \]
where the bottom horizontal pseudo-functor is the one of  Proposition~\ref{PROPPSEUDOFUNCTDIA}, 2.
\end{PAR}

\section{Yoga of correspondences of diagrams in a pre-multiderivator}\label{YOGA}

Let $\SSS$ be a pre-multiderivator.
This section contains a discussion which will improve our understanding of the category $\Dia^{\cor}(\SSS)$.

\begin{PAR}\label{DEF3MORPH}
We will define three types of generating\footnote{``Generating'' in the sense that any 1-morphism in $\Dia^{\cor}(\SSS)$ is 2-isomorphic to a composition of these (cf.\@ Corollary~\ref{KORFIBDER2_2}).} 1-morphisms in $\Dia^{\cor}(\SSS)$. We first define them as objects in the categories
$\Span_{\SSS}(\dots)$ (without the restriction ${}^F$). 

\begin{enumerate}
\item[${[\beta^{(S)}]}$] for a functor $\beta: I \rightarrow J$ in $\Dia$ and an object $S \in \SSS(J)$, 
consists of the correspondence of diagrams
\[ \xymatrix{
& I \ar[rd]^{\beta}\ar@{=}[ld] \\
I & ; & J
} \]
and over it in $\tau_1(\Span_{\SSS}((I, \beta^*S ); (J, S)))$ the canonical correspondence given by the identity $\id_{\beta^*S}$.

\item[${[\alpha^{(S)}]'}$] for a functor $\alpha: I \rightarrow J$ in $\Dia$ and an object $S \in \SSS(J)$,
consists of the correspondence of diagrams 
\[ \xymatrix{
& I \ar[ld]_{\alpha}\ar@{=}[rd] \\
J & ; & I
} \]
and over it in $\tau_1(\Span_{\SSS}((J, S); (I, \alpha^*S )))$ the canonical correspondence given by the identity $\id_{\alpha^*S}$.

\item[${[f]}$] for a morphism $f \in \Hom_{\SSS(A)}(S_1, \dots, S_n; T)$, where $A$ is any diagram in $\Dia$, and $S_1, \dots, S_n, T$ are objects in $\SSS(A)$, is defined by the trivial correspondence of diagrams 
\[ \xymatrix{
& & A \ar@{=}[ld] \ar@{=}[lld] \ar@{=}[rd] \\
A & A &;  & A
} \]
together with $f$.  
\end{enumerate}
\end{PAR}

\begin{PAR}\label{PARDIACORSCAN1MOR}
Note that the correspondences of the last paragraph do not define 1-morphisms in $\Dia^{\cor}(\SSS)$ yet, as we defined it, because they are not always objects in the $\Span^F$ subcategory ($[\alpha^{(S)}]'$ is already, if $\alpha$ is a fibration; $[\beta^{(S)}]$ is, if $\beta$ is an opfibration; and $[f]$ is, if $n=0,1$, respectively). 

From now on, we denote by the same symbols $[\alpha^{(S)}], [\beta^{(S)}]', [f]$ chosen 1-morphisms in $\Dia^{\cor}(\SSS)$ which are isomorphic to those defined above in the $\tau_1$-categories (cf.\@ Lemma~\ref{PAREQSPANFIBERED}). Those are determined only up to 2-isomorphism in $\Dia^{\cor}(\SSS)$. 

For definiteness, we choose $[\beta^{(S)}]$ to be the correspondence 
\[ \xymatrix{
& I \times_{/J} J \ar[rd]^{\pr_2}\ar[ld]_{\pr_1} \\
I & & J
} \]
and over it in $\tau_1(\Span_{\SSS}((I, \beta^*S ); (J, S)))$ the morphism $\pr_1^*\beta^*S \rightarrow \pr_2^*S$ 
given by the natural transformation $\mu_\beta: \beta \circ \pr_1 \Rightarrow \pr_2$.
Similarly, we choose $[\alpha^{(S)}]'$ to be the correspondence 
\[ \xymatrix{
& J \times_{/J} I \ar[rd]^{\pr_2}\ar[ld]_{\pr_1} \\
J & & I
} \]
and over it in $\tau_1(\Span_{\SSS}((J, S), (I, \alpha^*S )))$ the morphism $\pr_1^* S \rightarrow \pr_2^* \alpha^*S$ 
given by the natural transformation $\mu_\alpha: \pr_1 \Rightarrow \alpha \circ  \pr_2$.
\end{PAR}

\begin{PAR}\label{DIACORSADJUNCTION}
For any $\alpha: I \rightarrow J$, and an object $S \in \SSS(J)$, we define a 2-morphism
\[ \epsilon: \id \Rightarrow [\alpha^{(S)}] \circ  [\alpha^{(S)}]' \]
given by the commutative diagrams
\[ \vcenter{\xymatrix{
& I \ar@{=}[rd]^{} \ar@{=}[ld] \ar[dd]^{\Delta} \\
I & & I \\
& I \times_{/J} J \times_{/J} I \ar[ru]_{\pr_3}\ar[lu]^{\pr_1} 
} } \qquad \vcenter{ \xymatrix{
\Delta^* \pr_1^*  \alpha^* S \ar@{=}[rrrr]^{\Delta^*(\SSS( \mu_2 \circ \mu_1)(S)) = \id_{\alpha^*S} } &&&& \Delta^* \pr_3^* \alpha^* S  \ar@{=}[d] \\
\alpha^* S \ar@{=}[rrrr] \ar@{=}[u] &&&& \alpha^* S
} } \]

and we define a 2-morphism
\[ \mu:  [\alpha^{(S)}]' \circ  [\alpha^{(S)}] \Rightarrow \id \]
given by the commutative diagrams
\[ \vcenter{ \xymatrix{
& J \times_{/J} I \times_{/J} J  \ar[dd]|{\alpha \pr_2}  \ar[rd]^{\pr_3} \ar[ld]_{\pr_1}  \\
J \ar@{}[r]|{\overset{\mu_2}{\Rightarrow}} &  \ar@{}[r]|{\overset{\mu_1}{\Rightarrow}}  & J \\
& J \ar@{=}[ru]^{} \ar@{=}[lu]
} } \qquad  \vcenter{ \xymatrix{
\pr_2^* \alpha^*  S   \ar@{=}[rrr] &&&  \pr_2^* \alpha^*  S  \ar[d]^{\SSS(\mu_1)(S)} \\
\pr_1^* S \ar[rrr]_{\SSS(\mu_2 \circ \mu_1)(S)} \ar[u]^{\SSS(\mu_2)(S)}  &&& \pr_3^* S
}  } \]
\end{PAR}
\begin{PAR}\label{DIACORS2FUNCT}
A natural transformation $\nu: \alpha \Rightarrow \beta$ establishes a morphism
\[ [\nu]: [\SSS(\nu)(S)] \circ [\alpha^{(S)}] \Rightarrow [\beta^{(S)}]  \]
given by the commutative diagrams: 
\[ \vcenter{ \xymatrix{
& J \times_{/J,\beta} I \ar[rd]^{\pr_2'} \ar[ld]_{\pr_1'} \ar[dd]^{\widetilde{\nu}} \\
J & & I \\
& J \times_{/J,\alpha} I  \ar[ru]_{\pr_2}\ar[lu]^{\pr_1} 
} } \qquad  \vcenter{ \xymatrix{
\widetilde{\nu}^* \pr_1^*  S  \ar[rrr]^-{ \widetilde{\nu}^* \SSS(\mu_\alpha)} &&& \widetilde{\nu}^* \pr_2^* \alpha^* S  \ar[rr]^-{ \widetilde{\nu}^* \pr_2^* \SSS(\nu)} && \widetilde{\nu}^* \pr_2^* \beta^*S  \ar@{=}[d]  \\
(\pr_1')^* S \ar[rrrrr]_-{\SSS(\mu_\beta)(S)} \ar@{=}[u]  & & & &  & (\pr_2')^* \beta^* S
} } \]

Note that we have the equation of natural transformations $(\nu \ast \pr_2') \circ (\mu_\alpha \ast \widetilde{\nu}) = \mu_\beta$. 
Here $\mu_\alpha$ and $\mu_\beta$ are as in \ref{PARDIACORSCAN1MOR}.

Similarly, a natural transformation $\nu: \alpha \Rightarrow \beta$ establishes a morphism
\[ [\nu]:    [\beta^{(S)}]' \circ [\SSS(\nu)(S)] \Rightarrow  [\alpha^{(S)}]'. \]
\end{PAR}

\begin{PAR}\label{DIACORS2COMMSQUARE}
Consider the diagrams from axiom (FDer3 left/right)
\[ \xymatrix{
I \times_{/J} {j} \ar[r]^-\iota \ar[d]_p \ar@{}[rd]|{\Swarrow^\mu}  & I \ar[d]^\alpha \\
j \ar@{^{(}->}[r] & J
} \quad \xymatrix{
{j} \times_{/J} I  \ar[r]^-\iota \ar[d]_p \ar@{}[rd]|{\Nearrow^\mu}  & I \ar[d]^\alpha \\
j \ar@{^{(}->}[r] & J
} \]
By the construction in \ref{DIACORS2FUNCT}, we get a canonical 2-morphism
\begin{equation}\label{2commsquare_l}
 [\SSS(\mu)(S)]   \circ [\iota^{(\alpha^*S)}] \circ [\alpha^{(S)}] \Rightarrow   [p^{(S_j)}] \circ [j^{(S)}].
\end{equation}
and a canonical 2-morphism
\begin{equation}\label{2commsquare_r}
  [\alpha^{(S)}]' \circ   [\iota^{(\alpha^*S)}]' \circ [\SSS(\mu)(S)]  \Rightarrow  [j^{(S)}]' \circ [p^{(S_j)}]' .
\end{equation}
respectively. Here $S_j$ denotes $j^*S$ where $j$, by abuse of notation, also denotes the inclusion of the one-element category $j$ into $J$. 
\end{PAR}

\begin{PAR}
Let  $\xi$ be any 1-morphism $\Dia^{\cor}(\SSS)$ given by
\[ \xymatrix{
& && A \ar[rd]^{\beta} \ar[ld]^{\alpha_n} \ar[llld]_{\alpha_1}   \\
I_1 & \cdots &  I_n & ;  & J
} \]
and a morphism 
\[ f_\xi  \in \Hom_{\SSS(A)}(\alpha_1^*S_1, \dots, \alpha_n^*S_n; \beta^* T). \]
We define a 1-morphism $\xi \times K$ in $\Dia^{\cor}(\SSS)$ by
\[ \xymatrix{
& && A \times K \ar[rd]^{\beta \times \id} \ar[ld]^{\alpha_n \times \id} \ar[llld]_{\alpha_1 \times \id}   \\
I_1 \times K & \cdots &  I_n \times K & & J \times K
} \]
and 
\[ f_{\xi \times K} := \pr_1^* f_\xi  \in \Hom_{\SSS(A)}( \pr_1^* \alpha_1^*S_1, \dots, \pr_1^*  \alpha_n^*S_n; \pr_1^* \beta^* T). \]
Note that the here defined $\xi \times K$ does not necessarily lie in the category $\Span^F_\SSS(\dots)$. Hence we denote by $\xi \times K$ any isomorphic (in the $\tau_1$-truncation) correspondence which does
lie in $\Span^F_\SSS(\dots)$. We also define a correspondence 
 $\xi \times_j K$ in $\Dia^{\cor}(\SSS)$ by
\[ \xymatrix{
& && &&A \times K \ar[rd]^{\beta \times \id} \ar[ld]^{\alpha_n \pr_1} \ar[llld]^{\alpha_j \times \id}  \ar[llllld]_{\alpha_1 \pr_1}   \\
I_1  & \cdots&  I_j \times K & \cdots &  I_n  & & J \times K
} \]
and 
\[ f_{\xi \times_j K} := \pr_1^* \xi  \in \Hom_{\SSS(A)}( \pr_1^* \alpha_1^*S_1, \dots, \pr_1^*  \alpha_n^*S_n; \pr_1^* \beta^* T). \]
The here defined $\xi \times_j K$ does already lie in the category $\Span^F_\SSS(\dots)$. 
\end{PAR}

\begin{LEMMA}\label{LEMMAPROPDIACORS}
With the notation of \ref{DEF3MORPH}:
\begin{enumerate}
\item The 2-morphisms of \ref{DIACORSADJUNCTION}
\[ \epsilon: \id \Rightarrow [\alpha^{(S)}] \circ  [\alpha^{(S)}]' \qquad
 \mu:  [\alpha^{(S)}]' \circ  [\alpha^{(S)}] \Rightarrow \id \]
 establish an adjunction between $[\alpha^{(S)}]$ and $[\alpha^{(S)}]'$ in the 2-category $\Dia^{\cor}(\SSS)$. 
 \item The exchange 2-morphisms of (\ref{2commsquare_l}) and and of (\ref{2commsquare_r}) w.r.t.\@ the adjunction of 1.\@, namely
\[  [p^{(S_j)}]' \circ [\SSS(\mu)(S)] \circ  [\iota^{(\alpha^*S)}] \Rightarrow  [j^{(S)}] \circ [\alpha^{(S)}]'   \]
and
\[    [\iota^{(\alpha^*S)}]' \circ [\SSS(\mu)(S)] \circ [p^{(S_j)}]  \Rightarrow  [\alpha^{(S)}] \circ  [j^{(S)}]'   \]
are 2-isomorphisms. 
\item For any $\alpha: K \rightarrow L$ there are natural isomorphisms
\begin{equation} \label{commtensorpullback1}
[\alpha^{(\pr_1^*T)}]  \circ (\xi \times L) \cong (\xi \times K) \circ ([\alpha^{(\pr_1^*S_1)}], \dots, [\alpha^{(\pr_1^*S_n)}]) 
\end{equation}
and
\begin{equation}  \label{commtensorpullback2}
 [\alpha^{(\pr_1^*T)}] \circ (\xi \times_j L) \cong (\xi \times_j K) \circ_j [\alpha^{(\pr_1^*S_j)}]. 
\end{equation} 
\item 
The exchange of (\ref{commtensorpullback1}) w.r.t.\@ the adjunction of 1.\@, namely
\[ [\alpha^{(\pr_1^*T)}]' \circ (\xi \times K)   \circ ([\alpha^{(\pr_1^*S_1)}], \dots, \id, \dots,  [\alpha^{(\pr_1^*S_n)}])  \cong (\xi \times L) \circ_j [\alpha^{(\pr_1^*S_j)}]' \]
is an isomorphism if $\alpha$ is an opfibration. The exchange of (\ref{commtensorpullback2}) w.r.t.\@ the adjunction of 1.\@, namely
\[ [\alpha^{(\pr_1^*T)}]' \circ (\xi \times_j K)  \cong (\xi \times_j L) \circ_j [\alpha^{(\pr_1^*S_j)}]' \]
is an isomorphism for any $\alpha$. 
\item 
For any $f \in \Hom_{\SSS(J)}(S_1, \dots, S_n; T)$ and $\alpha: I \rightarrow J$ there is a natural isomorphism
\begin{equation} \label{commfpullback}
 [\alpha^{(T)}] \circ [ f ]  \cong [ \alpha^* f ]  \circ ([\alpha^{(S_1)}], \dots, [\alpha^{(S_n)}]).
\end{equation}
\item The exchange of (\ref{commfpullback}) w.r.t.\@ the adjunction of 1.\@, and w.r.t.\@ the $j$-th slot, namely
\[  [\alpha^{(T)}]' \circ [ \alpha^* f ] \circ ([\alpha^{(S_1)}], \dots, \id, \dots,  [\alpha^{(S_n)}])  \cong  [ f ] \circ_j  [\alpha^{(S_j)}]' \]
is an isomorphism if $\alpha$ is an opfibration.
\end{enumerate}
\end{LEMMA}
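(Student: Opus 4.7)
The backbone of my plan is the observation, recorded as Remark~\ref{SSS2FIB} and refined by Lemma~\ref{PAREQSPANFIBERED}, that the projection $\Dia^{\cor}(\SSS) \rightarrow \Dia^{\cor}$ is a 2-fibration with discrete fibers. Consequently, an equality of 2-morphisms in $\Dia^{\cor}(\SSS)$ holds as soon as it holds after projecting to $\Dia^{\cor}$ and the $\SSS$-data matches, and a 2-morphism is a 2-isomorphism as soon as its image in $\Dia^{\cor}$ is one. I therefore plan to settle each assertion by first treating the underlying correspondence-of-diagrams version, for which the classical comma-category manipulations apply, and then verifying that the resulting morphism in $\SSS$ is the prescribed one.

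For (1), I would write out the two triangle identities for $(\epsilon,\mu)$ as pasting diagrams and project them to $\Dia^{\cor}$. There, they reduce to the standard point-wise adjunctions between the two projections $\pr_1,\pr_2 \colon I \times_{/J} J \rightarrow I,J$ (and the symmetric version with $J \times_{/J} I$), whose common section is the diagonal $\Delta$; the required equalities of natural transformations follow from the defining universal property of the comma category. The $\SSS$-side is then forced: the diagram (\ref{eqcommdiaspand}) collapses to the identity $\id_{\alpha^*S}$ after a straightforward cancellation of inverse natural transformations. For (2), the exchange 2-morphism is constructed from (1) by the usual Beck--Chevalley recipe in a 2-category; that it is a 2-isomorphism follows, again by the discrete-fiber principle, once I exhibit the comma-category comparison $j \times_{/J} I \cong \{j\} \times_I (I \times_{/J} J)$ as an equivalence in $\Dia$.

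For (3) and (5), both isomorphisms amount to writing the two composed 1-morphisms as explicit objects of $\Span^F_{\SSS}$ (after replacing the naive compositions by equivalent representatives in $\Span^F$ via Lemma~\ref{PAREQSPANFIBERED}(2)) and checking that they agree up to a canonical 1-morphism produced by the universal property of fiber product / comma category, whose $\SSS$-component is the identity by pseudo-functoriality of $\SSS$. For (4) and (6), the exchange is obtained formally from the adjunction of (1). The opfibration hypothesis enters at exactly the step where one must compare a strict fiber product $I \times_J K$ with the comma category $I \times_{/J} K$: when $\alpha \colon K \rightarrow J$ is an opfibration, the inclusion of the fiber product into the comma category is a right adjoint with invertible unit, and Lemma~\ref{LEMMAAUX} lifts this adjunction to $\Dia^{\cor}(\SSS)$, yielding an isomorphism after $\tau_1$.

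The main obstacle is purely bookkeeping: every diagram one writes down most naturally lives in $\Span_\SSS$ rather than in $\Span^F_\SSS$, so at each step one must replace objects and morphisms by equivalent ones landing in the fibered/opfibered subcategory before invoking the $\tau_1$-truncation. Once this replacement is consistently performed, the substantive content of all six parts is already contained in the corresponding statements over $\Dia^{\cor}$, which in turn are geometric statements about comma categories proven by direct inspection.
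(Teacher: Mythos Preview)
Your proposal is correct and is precisely the kind of argument the paper has in mind: the paper's own proof reads, in its entirety, ``A purely algebraic manipulation that we leave to the reader.'' Your reduction via the 2-fibration $\Dia^{\cor}(\SSS)\to\Dia^{\cor}$ with discrete fibers (Lemma~\ref{PAREQSPANFIBERED}) to comma-category identities in $\Dia^{\cor}$, followed by a check that the $\SSS$-data is forced, is exactly the intended manipulation.
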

\begin{proof} A purely algebraic manipulation that we leave to the reader. \end{proof}

\begin{PAR}\label{PARPREPMORPHDIASCOR2}
Let $\DD \rightarrow \SSS$ be a morphism of (lax/oplax) pre-multiderivators satisfying (Der1) and (Der2).  
Consider the strict 2-functor
\[ \Dia^{\cor}(\DD) \rightarrow  \Dia^{\cor}(\SSS)   \]
{\em and assume that it is a 1-opfibration, and 2-bifibration with 1-categorical fibers. }
The fiber over a pair $(I, S)$ is just the fiber $\DD(I)_S$ of the usual functor $\DD(I) \rightarrow \SSS(I)$. 
The 1-opfibration and 2-fibration can be seen (via the construction of Proposition~\ref{PROPGROTHCONSTR}) as a pseudo-functor of 2-multicategories
\[ \Psi: \Dia^{\cor}(\SSS) \rightarrow \mathcal{CAT}. \]
\end{PAR}
\begin{PAR}\label{PARPREPMORPHDIASCOR2R}
If
\[ \Dia^{\cor}(\DD) \rightarrow  \Dia^{\cor}(\SSS)  \]
is a {\em 1-fibration}, and 2-fibration with 1-categorical fibers there is still an associated pseudo-functor of 2-categories (not of 2-multicategories)
\[ \Psi': \Dia^{\cor}(\SSS)^{1-\op, 2-\op} \rightarrow \mathcal{CAT}. \]
\end{PAR}

\begin{PROP}\label{PROPBASICDIACORS}With the notation of \ref{DEF3MORPH}:
\begin{enumerate}
\item 
Assume that
\[ \Dia^{\cor}(\DD) \rightarrow  \Dia^{\cor}(\SSS)   \]
is a {\em 1-opfibration}, and 2-fibration with 1-categorical fibers. 
Then the functor $\Psi$ of \ref{PARPREPMORPHDIASCOR2} maps (up to isomorphism of functors)
\begin{eqnarray*}
{ [\alpha^{(S)}]  } &\mapsto &(\alpha^{(S)})^*  \\
{ [\beta^{(S)}]' } &\mapsto& \beta_!^{(S)}  \\
{ [f] } &  \mapsto  & f_\bullet 
\end{eqnarray*}
where $\beta_!^{(S)}$ is a left adjoint of $\beta^*$ and $f_\bullet$ is a functor determined by $\Hom_{\DD(I), f}(\mathcal{E}_1, \dots, \mathcal{E}_n; \mathcal{F}) \cong \Hom_{\DD(I)_T}(f_\bullet(\mathcal{E}_1, \dots, \mathcal{E}_n), \mathcal{F})$.

\item Assume that
\[ \Dia^{\cor}(\DD) \rightarrow  \Dia^{\cor}(\SSS) \]
is a {\em 1-fibration}, and 2-fibration with 1-categorical fibers. 

Then pullback functors\footnote{In the case of $[\alpha^{(S)}]$ and $[\beta^{(S)}]'$ these are $\Psi'([\alpha^{(S)}])$ and $\Psi'([\beta^{(S)}]')$.} w.r.t.\@ the following 1-morphisms in $\Dia^{\cor}(\SSS)$ are given by 
\begin{eqnarray*}
{ [\alpha^{(S)}]  } &\mapsto &   \alpha_*^{(S)}  \\
{ [\beta^{(S)}]' } &\mapsto& (\beta^{(S)})^* \\
{ [f] } &  \mapsto  & f^{\bullet, j}  \qquad \text{pullback w.r.t.\@ the $j$-th slot.}
\end{eqnarray*}
where $\alpha_*^{(S)}$ is a right adjoint of $\alpha^*$ and $f^{\bullet, j}$ is a functor determined by $\Hom_{\DD(I), f}(\mathcal{E}_1, \dots, \mathcal{E}_n; \mathcal{F}) \cong \Hom_{\DD(I)_T}(\mathcal{E}_j, f^{\bullet, j}(\mathcal{E}_1, \overset{\widehat{j}}{\dots}, \mathcal{E}_n; \mathcal{F}))$. 
\end{enumerate}
\end{PROP}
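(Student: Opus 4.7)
The plan is to identify the three families of generating 1-morphisms by unfolding the Grothendieck-construction description of $\Psi$ (resp.\ $\Psi'$) from Proposition~\ref{PROPGROTHCONSTR}, using the adjunction of Lemma~\ref{LEMMAPROPDIACORS}(1) to cut the work in half.

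First I would dispose of $[f]$. Since $[f]$ lies over the trivial multicorrespondence on $A\in\Dia$, a 1-morphism of $\Dia^{\cor}(\DD)$ lifting $[f]$ is by Definition~\ref{PARDEFDIACORD} nothing other than a 1-morphism of $\DD(A)$ over $f$, i.e.\ an element of $\Hom_{\DD(A),f}(\mathcal{E}_1,\dots,\mathcal{E}_n;\mathcal{F})$. Under the 1-categorical-fiber hypothesis the 2-Cartesian condition of Definition~\ref{DEFCOCART} collapses to the equivalence of categories $\Hom_{\DD(A)_T}(\mathcal{F},\mathcal{G})\iso\Hom_{\DD(A),f}(\mathcal{E}_1,\dots,\mathcal{E}_n;\mathcal{G})$ induced by composition with the chosen $\widetilde{f}$, which is exactly the universal property stated in the proposition for $f_\bullet$. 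Item~(2) for $f^{\bullet,j}$ is handled identically after replacing coCartesian by Cartesian w.r.t.\ the $j$-th slot.

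Next I would treat the pair $([\alpha^{(S)}]',[\alpha^{(S)}])$ by computing one member directly and then invoking Lemma~\ref{LEMMAPROPDIACORS}(1): since that lemma produces an adjunction in $\Dia^{\cor}(\SSS)$ and pseudo-functors of 2-multicategories preserve adjunctions, $\Psi$ sends this pair to an adjoint pair in $\mathcal{CAT}$, so one identification determines the other by uniqueness of adjoints. Taking the $F$-representative of \ref{PARDIACORSCAN1MOR} with middle $I\times_{/J}J$ (resp.\ $J\times_{/J}I$) together with the canonical $\mu_\beta$ (resp.\ $\mu_\alpha$), a coCartesian lift starting from an object in the fiber over the source unfolds, via the natural transformations, to the standard universal property of pullback along $\alpha$ (resp.\ of its left adjoint $\alpha_!$) restricted to the fiber over $S$. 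This is the claimed identification. Item~(2) is completely parallel, replacing $\Psi$ by $\Psi'$ and coCartesian by Cartesian; alternatively, when $\Dia^{\cor}(\DD)\to\Dia^{\cor}(\SSS)$ is also a 1-bifibration, the bifibered adjunction between pushforward and pullback links the two descriptions directly.

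The main obstacle is bookkeeping. The ``bare'' correspondence $\xymatrix{& I\ar@{=}[ld]\ar[rd]^\alpha\\I&;&J}$ of \ref{DEF3MORPH} does not lie in $\Span^F_{\SSS}$, so Lemma~\ref{PAREQSPANFIBERED}(1) must be invoked to pass to the fibrant model $I\times_{/J}J$ without changing the pushforward up to canonical isomorphism; when computing the coCartesian lift on this replacement one must carefully track the natural transformations $\mu_\alpha$, $\mu_\beta$ so as to recognize the derivator unit/counit and hence the defining universal property of the corresponding derivator adjoint.
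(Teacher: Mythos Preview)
Your approach is correct and matches the paper's own proof almost exactly: identify $\Psi([f])$ and $\Psi([\alpha^{(S)}])$ directly by comparing the Hom-sets they represent (invoking Lemma~\ref{PAREQSPANFIBERED} to pass between the bare and $\Span^F$ models), and then deduce the image of $[\alpha^{(S)}]'$ from the adjunction of Lemma~\ref{LEMMAPROPDIACORS}(1), since pseudo-functors preserve adjunctions; part~(2) is handled dually. The only stylistic difference is that the paper phrases the direct identifications purely as ``$\Psi(\xi)$ represents $\Hom_{\Dia^{\cor}(\DD),\xi}(-,\mathcal{F})$, and by Lemma~\ref{PAREQSPANFIBERED} that set equals $\Hom_{\DD(I)_S}(\alpha^*\mathcal{E},\mathcal{F})$'' rather than speaking of the 2-Cartesian condition ``collapsing''---your formulation is a bit imprecise there (the 1-categorical-fiber hypothesis is what makes the representing object well-defined, not what simplifies the coCartesianity condition), but the content is the same.
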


\begin{proof}
1. We have an isomorphism of sets\footnote{We identify a small discrete category with its set of isomorphism classes.}
\[ \Hom_{\Dia^{\cor}(\DD), [\alpha^{(S)}]}((J, \mathcal{E}), (I, \mathcal{F}))  \cong \Hom_{\Dia^{\cor}(\DD)_{(I,S)}}(\Psi([\alpha^{(S)}]) \mathcal{E}, \mathcal{F}).  \]
On the other hand, by definition and by Lemma~\ref{PAREQSPANFIBERED}, the left hand side is isomorphic to the set
\[ \Hom_{\DD(I)_S}(\alpha^* \mathcal{E},  \mathcal{F}). \]
The first assertion follows from the fact that $\Dia^{\cor}(\DD)_{(I,S)} = \DD(I)_S$. 

The second assertion follows from the first because by Lemma~\ref{LEMMAPROPDIACORS}, 1.\@ the 1-morphisms $[\alpha^{(S)}]$ and $[\alpha^{(S)}]'$ are adjoint in the 2-category $\Dia^{\cor}(\SSS)$. Note that a pseudo-functor like $\Psi$
preserves adjunctions.

We have an isomorphism of sets
\[ \Hom_{\Dia^{\cor}(\DD), [f]}((A, \mathcal{E}_1), \dots, (A, \mathcal{E}_n); (A, \mathcal{F}))  \cong \Hom_{\Dia^{\cor}(\DD)_{(A,T)}}(\Psi([f])(\mathcal{E}_1, \dots, \mathcal{E}_n),  \mathcal{F}).  \]
On the other hand, by definition and by Lemma~\ref{PAREQSPANFIBERED}, the left hand side is isomorphic to the set
\[ \Hom_{\DD(I), f}(\mathcal{E}_1, \dots, \mathcal{E}_n; \mathcal{F})  \]
and the third assertion follows from the fact that $\Dia^{\cor}(\DD)_{(A,T)} = \DD(A)_T$. 

The proof of 2.\@ is completely analogous. 
\end{proof}

\begin{KOR}\label{KORFIBDER2_2}
Assuming the conditions of \ref{PARPREPMORPHDIASCOR2},
consider any correspondence 
\[ \xi'   \in \Span_{\SSS}((I_1, S_1), \dots, (I_n, S_n); (J, T)) \] consisting of 
\[ \xymatrix{ &&&A \ar[dlll]_{\alpha_1} \ar[dl]^{\alpha_n}  \ar[dr]^{\beta} \\ 
I_1 & \cdots & I_n &; & J
} \]
and a morphism 
\[ f \in \Hom(\alpha_1^*S_1, \dots, \alpha_n^*S_n; \beta^*T) \]
in $\SSS(A)$.  
\begin{enumerate}
\item Over any 1-morphism  $\xi$ in $\Dia^{\cor}(\SSS)$, which is isomorphic to $\xi'$, a corresponding push-forward functor between
fibers (which is $\Psi(\xi')$ in the discussion \ref{PARPREPMORPHDIASCOR2}) is given (up to natural isomorphism) by the composition: 
\[ \beta_!^{(T)} \circ f_{\bullet} \circ (\alpha_1^*, \dots, \alpha_n^*). \]
\item Over any 1-morphism  $\xi$ in $\Dia^{\cor}(\SSS)$, which is isomorphic to $\xi'$, a pull-back functor w.r.t.\@ any slot $j$ between
fibers (which is $\Psi'(\xi')$ in the discussion \ref{PARPREPMORPHDIASCOR2R} if $\xi$ is a 1-ary 1-morphism) is given (up to natural isomorphism) by the composition: 
\[ \alpha_{j,*}^{(S_j)} \circ f^{\bullet,j} \circ (\alpha_1^*, \overset{\widehat{j}}{\dots},  \alpha_n^*; \beta^*). \]
\end{enumerate}
\end{KOR}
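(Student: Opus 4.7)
The plan is to decompose $\xi'$, up to isomorphism in $\tau_1(\Span_{\SSS}^F(\cdots))$, as a composition of the three types of elementary 1-morphisms introduced in \ref{DEF3MORPH}, and then to apply the pseudo-functoriality of $\Psi$ (respectively $\Psi'$) together with Proposition~\ref{PROPBASICDIACORS}. Concretely, the first step will be to establish a $\tau_1$-isomorphism in $\Dia^{\cor}(\SSS)$ of the form
\[ \xi' \;\cong\; [\beta^{(T)}] \circ [f] \circ \bigl([\alpha_1^{(S_1)}]', \ldots, [\alpha_n^{(S_n)}]'\bigr), \]
where the primed morphisms encode the projections $\alpha_i : A \to I_i$ and the unprimed $[\beta^{(T)}]$ encodes $\beta : A \to J$. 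This factorization I expect to verify by direct computation of the iterated fiber products defining composition in $\Dia^{\cor}$: since one leg of each elementary correspondence is an identity, every fiber product in the composition reduces, up to canonical equivalence, to $A$ itself, so the composite has left legs $(\alpha_1, \ldots, \alpha_n)$ and right leg $\beta$, and the morphism $f$ over $A$ is carried through unchanged.

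The second step will be to apply $\Psi$ to this factorization. Since $\Psi$ is a pseudo-functor of 2-multicategories, it preserves composition up to natural isomorphism. Proposition~\ref{PROPBASICDIACORS}(1) will then identify each elementary factor with the corresponding derivator functor ($\alpha_i^*$, $f_\bullet$, and $\beta_!^{(T)}$ respectively), yielding the claimed formula
\[ \Psi(\xi') \;\cong\; \beta_!^{(T)} \circ f_\bullet \circ (\alpha_1^*, \ldots, \alpha_n^*). \]
Part 2 will be treated identically using the contravariant pseudo-functor $\Psi'$ of \ref{PARPREPMORPHDIASCOR2R}: applying $\Psi'$ to the same factorization reverses the order of composition, and Proposition~\ref{PROPBASICDIACORS}(2) identifies each factor with the pullback-type functor, giving $\alpha_{j,*}^{(S_j)} \circ f^{\bullet,j} \circ (\alpha_1^*, \overset{\widehat{j}}{\dots}, \alpha_n^*; \beta^*)$ for the $j$-th-slot pullback.

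The main obstacle I anticipate is establishing the factorization in the first step. Its underlying combinatorics (fiber products with identity legs) is transparent, but the strict representatives of the elementary 1-morphisms $[\alpha_i^{(S_i)}]'$ and $[\beta^{(T)}]$ chosen in \ref{PARDIACORSCAN1MOR} involve comma categories rather than the na\"ive trivial correspondences appearing above. The required $\tau_1$-isomorphism should follow from the compatibility isomorphisms and adjunctions recorded in Lemma~\ref{LEMMAPROPDIACORS}, which match the strict models with their simpler $\tau_1$-counterparts. Nevertheless, pushing the natural transformations $\mu_{\alpha_i}$ and $\mu_\beta$ of \ref{PARDIACORSCAN1MOR} through the iterated composition and checking that the morphism $f$ transforms into the morphism data of $\xi'$ requires some bookkeeping.
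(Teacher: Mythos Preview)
Your approach is exactly the paper's: it too reduces both assertions to exhibiting a 2-isomorphism in $\Dia^{\cor}(\SSS)$ between $\xi$ and the composite $[\beta^{(T)}]' \circ [f] \circ ([\alpha_1^{(S_1)}], \dots, [\alpha_n^{(S_n)}])$, after which Proposition~\ref{PROPBASICDIACORS} finishes the job; the paper dismisses the verification of that factorization as ``an easy and purely algebraic manipulation,'' confirming your expectation that the bookkeeping with the comma-category representatives of \ref{PARDIACORSCAN1MOR} is routine. The only visible discrepancy is the placement of the primes---you write $[\beta^{(T)}] \circ [f] \circ ([\alpha_1^{(S_1)}]', \dots, [\alpha_n^{(S_n)}]')$ whereas the paper swaps them---but this reflects an internal inconsistency in the paper's notation (compare the explicit correspondences in \ref{DEF3MORPH} and \ref{PARDIACORSCAN1MOR} with the statement of Proposition~\ref{PROPBASICDIACORS}); your version is the one that agrees with the explicit definitions, and semantically the two factorizations are identical.
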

\begin{proof}
Because of Proposition~\ref{PROPBASICDIACORS}, in both cases, we only have to show that there is a 2-isomorphism 
\[ \xi \cong [\beta^{(T)}]' \circ [f] \circ ([\alpha_1^{(S_1)}] , \dots, [\alpha_n^{(S_n)}]) \]
in $\Dia^{\cor}(\SSS)$, which is an easy and purely algebraic manipulation. 
\end{proof}

The ``if'' part of the following main theorem should be seen as an analogue of Proposition~\ref{LEMMA6FU}.

\begin{HAUPTSATZ}\label{MAINTHEOREMFIBDER}
Let $\DD$ and $\SSS$ be pre-multiderivators satisfying (Der1) and (Der2) (cf.\@ \cite[Definition~1.3.5.]{Hor15}).
 A strict morphism of pre-multiderivators $\DD \rightarrow \SSS$ is a left (resp.\@ right) fibered multiderivator if and only if the associated strict 2-functor $\Dia^{\cor}(\DD) \rightarrow \Dia^{\cor}(\SSS)$ is a 1-opfibration (resp.\@ 1-fibration) of 2-multicategories. 
 \end{HAUPTSATZ}
 \begin{proof}
 We first show that $\Dia^{\cor}(\DD) \rightarrow \Dia^{\cor}(\SSS)$ is a 1-opfibration, if $\DD \rightarrow \SSS$ is a left fibered multiderivator. 
 Let $x=(A; \alpha_{A,1}, \dots, \alpha_{A,n}; \beta_A)$ be a correspondence in $\Span^F(I_1, \dots, I_n; J)$ and let 
 \[ f \in \Hom_{\Dia^{\cor}(\SSS)}(\alpha_{A,1}^*S_1, \dots, \alpha_{A,n}^*S_n; \beta_A^*T)  \]
 be a 1-morphism in $\Dia^{\cor}(\SSS)$ lying over $x$. In $\Dia^{\cor}(\DD)$ we have the following composition of isomorphisms of sets:
\begin{eqnarray*} 
&& \Hom_{\Dia^{\cor}(\DD),f}((I_1, \mathcal{E}_1), \dots, (I_n, \mathcal{E}_n); (J, \mathcal{F})) \\
&\cong& \Hom_{\DD(A),f}(\alpha_{A,1}^*\mathcal{E}_1, \dots, \alpha_{A,n}^*\mathcal{E}_n; \beta_A^* \mathcal{F}) \\
&\cong& \Hom_{\DD(A),\id_{\beta_A^*T}}( f_\bullet(\alpha_{A,1}^*\mathcal{E}_1, \dots, \alpha_{A,n}^*\mathcal{E}_n); \beta_A^* \mathcal{F}) \\
&\cong& \Hom_{\DD(A),\id_{T}}( \beta_{A,!} f_\bullet(\alpha_{A,1}^*\mathcal{E}_1, \dots, \alpha_{A,n}^*\mathcal{E}_n);  \mathcal{F}) \\
&\cong& \Hom_{\Dia^{\cor}(\DD),\id_{(J, T)}}( (J, \beta_{A,!} f_\bullet(\alpha_{A,1}^*\mathcal{E}_1, \dots, \alpha_{A,n}^*\mathcal{E}_n));  (J, \mathcal{F}))
\end{eqnarray*}
using (FDer0 left) and (FDer3 left).
One checks that this composition is induced by the composition in $\Dia^{\cor}(\DD)$ with a 1-morphism in
\[  \Hom_f((I_1, \mathcal{E}_1), \dots, (I_n, \mathcal{E}_n); (J, \beta_{A,!} f_\bullet(\alpha_{A,1}^*\mathcal{E}_1, \dots, \alpha_{A,n}^*\mathcal{E}_n))). \]
Hence this 1-morphism is weakly coCartesian.

Note that we write $\Hom_{\Dia^{\cor}(\DD),f}$ for the category of 1-morphisms which map to $f$ in $\Dia^{\cor}(\SSS)$ and those 2-morphisms that map to $\id_f$ in $\Dia^{\cor}(\SSS)$. 
 
It remains to be shown that the composition of weakly coCartesian 1-morphisms is weakly coCartesian (cf.\@ Proposition~\ref{LEMMAWEAKLY}). Let
 \[ g \in \Hom_{\Dia^{\cor}(\SSS)}(\alpha_{B,1}^*T_1, \dots, \alpha_{B,m}^*T_m; \beta_{B}^*U)  \]
 be another 1-morphism, composable with $f$, lying over a correspondence $y=(B; \alpha_{B,1}, \dots, \alpha_{B,m}; \beta_B)$ in $\Span^F(J_1, \dots, J_m; K)$.
Setting $J_i:=J$ and $T_i:=T$ the composition of $x$ and $y$ w.r.t.\@ the $i$-th slot is the correspondence:
 \[ \xymatrix{
&&&&& A \times_{J_i} B  \ar[lld]_{\pr_1} \ar[rrd]^{\pr_2} \\
&&& A \ar[llld] \ar[ld] \ar[rrrd]^{\beta_A} &&&&  B \ar[llld] \ar[ld]^{\alpha_{B,i}}  \ar[rd] \ar[rrrd] \\
I_1 & \cdots & I_n & ; & J_1 & \cdots & J_i & \cdots & J_m & ; & K
} \]
The composition of $g$ and $f$ is the morphism
\[ \pr_2^*g \circ_{i} \pr_1^* f  \]
\[ \in  \Hom(\pr_2^*\alpha_{B,1}^*T_1,\dots,\underbrace{\pr_1^*\alpha_{A,1}^*S_1, \dots, \pr_1^*\alpha_{A,n}^*S_n}_{\text{at }i},\dots,\pr_2^*\alpha_{B,m}^*T_{m}; \pr_2^*\beta_{B}^*U)  \]
We have to show that the natural map
\begin{eqnarray*} 
&&\beta_{B,!} g_\bullet(\alpha_{B,1}^*\mathcal{F}_{1}, \dots,  \underbrace{\alpha_{B,i}^* \beta_{A,!} f_\bullet (\alpha_{A,1}^*\mathcal{E}_1, \dots, \alpha_{A,n}^*\mathcal{E}_n)}_{\text{at $i$}}, \dots  \alpha_{B,m}^*\mathcal{F}_m) \\
&\rightarrow &\beta_{B,!} \pr_{2,!} (\pr_2^*g \circ_{i} \pr_1^* f)_\bullet  (\pr_2^* \alpha_{B,1}^*\mathcal{F}_{1}, \dots, \underbrace{\pr_1^* \alpha_{A,1}^* \mathcal{E}_1, \dots, \pr_1^* \alpha_{A,n}^*\mathcal{E}_n}_{\text{at $i$}}, \dots \pr_2^* \alpha_{B,m}^*\mathcal{F}_m)
 \end{eqnarray*}
 is an isomorphism.  It is the composition of the following morphisms which are all isomorphisms respectively by (FDer4 left) using \cite[Proposition~1.3.23.\@ 2.]{Hor15}, by (FDer5 left) observing that $\pr_2$ is a opfibration, by the second part of (FDer0 left) for $\pr_1$, and by the first part of (FDer0 left) in the form that the composition of coCartesian morphisms is coCartesian.
\begin{eqnarray*} 
&&\beta_{B,!} g_\bullet(\alpha_{B,1}^*\mathcal{F}_{1}, \dots,  \underbrace{\alpha_{B,i}^* \beta_{A,!} f_\bullet (\alpha_{A,1}^*\mathcal{E}_1, \dots, \alpha_{A,n}^*\mathcal{E}_n)}_{\text{at $i$}}, \dots,  \alpha_{B,m}^*\mathcal{F}_m) \\
&\rightarrow& \beta_{B,!} g_\bullet(\alpha_{B,1}^*\mathcal{F}_{1}, \dots,  \underbrace{\pr_{2,!}\pr_1^* f_\bullet (\alpha_{A,1}^*\mathcal{E}_1, \dots, \alpha_{A,n}^*\mathcal{E}_n)}_{\text{at $i$}}, \dots,  \alpha_{B,m}^*\mathcal{F}_m) \\
&\rightarrow& \beta_{B,!} \pr_{2,!} (\pr_{2}^*g)_\bullet(\pr_{2}^* \alpha_{B,1}^*\mathcal{F}_{1}, \dots,  \underbrace{\pr_1^* f_\bullet (\alpha_{A,1}^*\mathcal{E}_1, \dots, \alpha_{A,n}^*\mathcal{E}_n)}_{\text{at $i$}}, \dots,  \pr_{2}^*\alpha_{B,m}^*\mathcal{F}_m) \\
&\rightarrow& \beta_{B,!} \pr_{2,!} (\pr_{2}^*g)_\bullet(\pr_{2}^* \alpha_{B,1}^*\mathcal{F}_{1}, \dots,  \underbrace{ (\pr_1^*f)_\bullet (\pr_1^* \alpha_{A,1}^*\mathcal{E}_1, \dots, \pr_1^*\alpha_{A,n}^*\mathcal{E}_n)}_{\text{at $i$}}, \dots,  \pr_{2}^*\alpha_{B,m}^*\mathcal{F}_m) \\
&\rightarrow& \beta_{B,!} \pr_{2,!} (\pr_2^*g \circ_{i} \pr_1^* f)_\bullet  (\pr_2^* \alpha_{B,1}^*\mathcal{F}_{1}, \dots, \underbrace{ \pr_1^* \alpha_{A,1}^* \mathcal{E}_1, \dots, \pr_1^* \alpha_{A,n}^*\mathcal{E}_n}_{\text{at $i$}}, \dots, \pr_2^* \alpha_{B,m}^*\mathcal{F}_m).
 \end{eqnarray*}
 
 Now we proceed to prove the converse, hence we assume that $\Dia^{\cor}(\DD) \rightarrow \Dia^{\cor}(\SSS)$ is a 1-opfibration and have to show all axioms of a left fibered derivator:

(FDer0 left) 
 First we have an obvious pseudo-functor of 2-multicategories
 \begin{eqnarray*}
  F: \SSS(I) &\hookrightarrow& \Dia^{\cor}(\SSS) \\
   S & \mapsto & (I, S) \\
    f & \mapsto & [f].
 \end{eqnarray*}
 By Proposition~\ref{PROPPULLBACK} the pull-back $F^*\Dia^{\cor}(\DD) \rightarrow \SSS(I)$ (in the sense of Definition~\ref{DEFPULLBACK}) is 1-opfibered and 2-fibered, if $\Dia^{\cor}(\DD) \rightarrow \Dia^{\cor}(\SSS)$ is 1-opfibered and 2-fibered. 
 To show that $\DD(I) \rightarrow \SSS(I)$ is a 1-opfibration and 2-fibration of multicategories, it thus suffices to show that $F^*\Dia^{\cor}(\DD)$ is equivalent to $\DD(I)$ over $\SSS(I)$. 
 The class of objects of $F^*\Dia^{\cor}(\DD)$ is by definition isomorphic to the class of objects of $\DD(I)$. Therefore we are left to show that there are equivalences of categories (compatible with composition)
 \[ \Hom_{\DD(I),f}(\mathcal{E}_1, \dots, \mathcal{E}_n; \mathcal{F}) \rightarrow \Hom_{F^*\Dia^{\cor}(\DD),f}(\mathcal{E}_1, \dots, \mathcal{E}_n; \mathcal{F})  \]
 for any morphism $f \in \Hom_{\SSS(I)}(S_1, \dots, S_n; T)$, where $\mathcal{E}_i$ is an object of $\DD(I)$ over $S_i$ and $\mathcal{F}$ is an object over $T$. 
Note that the left-hand side is a set.
 
We have a 2-Cartesian diagram of categories
\[ \xymatrix{
\Hom_{F^*\Dia^{\cor}(\DD),f}(\mathcal{E}_1, \dots, \mathcal{E}_n; \mathcal{F}) \ar[r] \ar[d] & \Hom_{\Dia^{\cor}(\DD)}((I,\mathcal{E}_1), \dots, (I, \mathcal{E}_n); (I, \mathcal{F})) \ar[d] \\
\{f\} \ar[r]^-F & \Hom_{\Dia^{\cor}(\SSS)}((I,S_1), \dots, (I,S_n); (I,T))
} \]
Since the right vertical morphism is a fibration (cf.\@ Lemma~\ref{PAREQSPANFIBERED}) the diagram is also Cartesian. Futhermore by Lemma~\ref{PAREQSPANFIBERED} the right vertical morphism is equivalent to 
\[ \xymatrix{
  \tau_1\left(\Span_{\DD}((I, \mathcal{E}_1), \dots, (I, \mathcal{E}_n); (I, \mathcal{F}))\right) \ar[d] \\
  \tau_1\left(\Span_{\SSS}((I, S_1), \dots, (I, S_n); (I, T))\right).
} \]
(Here $\Span_{\DD}^F(\dots)$ was changed to $\Span_{\DD}(\dots)$ and similarly for $\Span_{\SSS}^F(\dots)$.)

In the category $\tau_1(\Span_{\SSS}((I_1, S_1), \dots, (I_n, S_n); (J, T)))$, the object $F(f)$ is isomorphic to the pair consisting of the trivial correspondence $(\id_I, \dots, \id_I; \id_I)$ and $f$ over it, whose fiber in the category $\tau_1(\Span_{\DD}((I, \mathcal{E}_1), \dots, (I, \mathcal{E}_n); (I, \mathcal{F})))$ is precisely the discrete category $\Hom_{\DD(I),f}(\mathcal{E}_1, \dots, \mathcal{E}_n; \mathcal{F})$.
The remaining part of (FDer0 left) will be shown below. 

Since we have a 1-opfibration and 2-fibration we can equivalently
see the given datum as a pseudo-functor
\[ \Psi: \Dia^{\cor}(\SSS) \rightarrow \mathcal{CAT} \]
and by Proposition~\ref{PROPBASICDIACORS}, $\Psi$ maps
$[\alpha^{(S)}]$ to a functor natural isomorphic to $\alpha^*: \DD(J)_S \rightarrow \DD(I)_{\alpha^*S}$. We have the freedom to 
choose $\Psi$ in such a way that it maps $[\alpha^{(S)}]$ precisely to $\alpha^*$. 

Axiom (FDer3 left) follows from Lemma~\ref{LEMMAPROPDIACORS}, 1.\@ stating that $[\alpha^{(S)}]$ has a left adjoint $[\alpha^{(S)}]'$ in the category $\Dia^{\cor}(\SSS)$ (cf.\@ also Proposition~\ref{PROPBASICDIACORS}). 

Axiom (FDer4 left) follows by applying $\Psi$ to the (first) 2-isomorphism of Lemma~\ref{LEMMAPROPDIACORS}, 2.

Axiom (FDer5 left) follows by applying $\Psi$ to the 2-isomorphism of Lemma~\ref{LEMMAPROPDIACORS}, 4.

The remaining part of (FDer0 left), i.e.\@ that $\alpha^*$ maps coCartesian arrows to coCartesian arrows follows by applying $\Psi$ to the 2-isomorphism of Lemma~\ref{LEMMAPROPDIACORS}, 3.
\end{proof}
 
\begin{BEM}
Given a 1-fibration (resp.\@ 1-opfibration) and 2-fibration of 2-multicategories with 1-categorical fibers $\mathcal{D} \rightarrow \Dia^{\cor}$ we can also reconstruct a (non-strict) pre-multiderivator
$\DD$. This will be briefly explained in \ref{PARDERCANWIRTH}. We will however not use this fact, but assume that our (op)fibered  2-multicategories $\Dia^{\cor}(\DD) \rightarrow \Dia^{\cor}$ come from a strict pre-multiderivator. 
\end{BEM}

\begin{BEM}
In view of Main~Theorem~\ref{MAINTHEOREMFIBDER} the basic results of \cite[\S 1]{Hor15} appear in a much clearer fashion. For example, from the transitivity for
bifibered 2-multicategories (Lemma~\ref{PROPBIFIBTRANSITIVITY}) follows immediately the transitivity for fibered multiderivators \cite[Proposition 1.4.1.]{Hor15}. 
\end{BEM}

\section{Internal and external monoidal structure.}\label{ABSRELMONOIDAL}

\begin{PAR}
Let $\mathcal{D}, \mathcal{S}$ be symmetric (for simplicity) 2-multicategories with all 2-morphisms invertible. 
Let $\mathcal{D} \rightarrow \mathcal{S}$ be a symmetric 1-bifibered and 2-isofibered 2-multicategory such that also $\mathcal{S} \rightarrow \{\cdot\}$ is 1-bifibered.
Then any pseudo-functor of 2-multicategories $s: \{\cdot\} \rightarrow \mathcal{S}$ with value $S$ gives rise to a symmetric closed monoidal structure $\otimes$ on the 2-category $\mathcal{D}_S$.
Moreover $\mathcal{D} \rightarrow \{\cdot\}$ is also fibered by transitivity (cf.\@ Proposition~\ref{PROPBIFIBTRANSITIVITY}). Therefore the whole 2-category $\mathcal{D}$  carries a closed monoidal structure $\boxtimes$ as well.
We call $\otimes$ the {\bf internal product}, and $\boxtimes$ the {\bf external product}, and write $\mathcal{HOM}$, and $\mathbf{HOM}$, respectively for the adjoints. We also denote by $\otimes$ the monoidal product in $\mathcal{S}$ itself and by $\mathcal{HOM}$ its adjoint. 
\end{PAR}

\begin{PAR}
The functor $s$ specifies, in particular, a distinguished 1-multimorphism $\Delta \in \Hom(S, S; S)$.
By abuse of notation, we denote by $\Delta$ (resp.\@ $\Delta'$) the corresponding 1-morphisms
\[ \Delta: S \otimes S \rightarrow S \qquad \Delta': S \rightarrow \mathcal{HOM}(S,S). \] 
\end{PAR}

By the arguments in the  proof of the transitivity of bifibrations of multicategories (cf.\@ Proposition~\ref{PROPBIFIBTRANSITIVITY}), we see that we actually have
\begin{KOR}
\begin{eqnarray*}
\mathcal{E} \otimes \mathcal{F} &\cong& \Delta_\bullet (\mathcal{E} \boxtimes \mathcal{F}), \\
\mathcal{HOM}(\mathcal{E}, \mathcal{F}) &\cong& (\Delta')^\bullet (\mathbf{HOM}(\mathcal{E} , \mathcal{F} )).
\end{eqnarray*}
\end{KOR}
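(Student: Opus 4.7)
Both isomorphisms are corollaries of the transitivity machinery already developed in Proposition~\ref{PROPBIFIBTRANSITIVITY} and Lemma~\ref{LEMMACOMPOSCART}, applied to the factorization $\mathcal{D} \rightarrow \mathcal{S} \rightarrow \{\cdot\}$. The pseudo-functor $s$ picks out a distinguished binary multimorphism $\Delta_S \in \Hom_{\mathcal{S}}(S, S; S)$ lying over the unique binary multimorphism of $\{\cdot\}$, and the idea is to factor $\Delta_S$ in two natural ways---through $S \otimes S$ on the coCartesian side, through $\mathcal{HOM}(S,S)$ on the Cartesian side---and to lift each factorization into $\mathcal{D}$.

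For the first isomorphism, since $\mathcal{S} \rightarrow \{\cdot\}$ is a 1-opfibration I would choose a coCartesian lift $\widetilde{\Delta}: (S, S) \rightarrow S \otimes S$ of the unique binary multimorphism of $\{\cdot\}$; the universal property of $\widetilde{\Delta}$ then produces a 1-ary morphism $\Delta: S \otimes S \rightarrow S$ (unique up to 2-isomorphism) with $\Delta \circ \widetilde{\Delta} \cong \Delta_S$. Next, in the 1-opfibration $\mathcal{D} \rightarrow \mathcal{S}$, form a coCartesian lift $\widetilde{\xi}: (\mathcal{E}, \mathcal{F}) \rightarrow \mathcal{E} \boxtimes \mathcal{F}$ of $\widetilde{\Delta}$; by Proposition~\ref{PROPBIFIBTRANSITIVITY} this lift is coCartesian also with respect to $\mathcal{D} \rightarrow \{\cdot\}$, so it genuinely represents the external product. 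Finally form the coCartesian lift $\zeta: \mathcal{E} \boxtimes \mathcal{F} \rightarrow \Delta_\bullet(\mathcal{E} \boxtimes \mathcal{F})$ of $\Delta$ in $\mathcal{D} \rightarrow \mathcal{S}$. By Lemma~\ref{LEMMACOMPOSCART} the composite $\zeta \circ \widetilde{\xi}$ is coCartesian over $\Delta \circ \widetilde{\Delta} \cong \Delta_S$, and the uniqueness-up-to-equivalence of targets of coCartesian lifts (Definition~\ref{DEFPUSHFWD}) identifies this target with $\mathcal{E} \otimes \mathcal{F}$.

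For the second isomorphism I would argue dually. Since $\mathcal{S} \rightarrow \{\cdot\}$ is a 1-fibration, choose a Cartesian lift $\epsilon \in \Hom_{\mathcal{S}}(\mathcal{HOM}(S,S), S; S)$ of the unique binary multimorphism of $\{\cdot\}$ w.r.t.\@ the first slot; then $\Delta_S$ factors as $\epsilon \circ_1 \Delta'$ for a unique 1-ary $\Delta': S \rightarrow \mathcal{HOM}(S, S)$. A Cartesian lift (w.r.t.\@ slot~1) of $\epsilon$ in $\mathcal{D} \rightarrow \mathcal{S}$ is, by Proposition~\ref{PROPBIFIBTRANSITIVITY} once more, also Cartesian for $\mathcal{D} \rightarrow \{\cdot\}$, so its first source object is $\mathbf{HOM}(\mathcal{E}, \mathcal{F})$. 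A 1-ary Cartesian lift of $\Delta'$ with target $\mathbf{HOM}(\mathcal{E}, \mathcal{F})$ has source $(\Delta')^\bullet \mathbf{HOM}(\mathcal{E}, \mathcal{F})$. Invoking the same-slot case of Lemma~\ref{LEMMACOMPOSCART}, the composition of these two Cartesian morphisms is Cartesian in slot~1 over $\epsilon \circ_1 \Delta' \cong \Delta_S$, so its first source is equivalent to $\mathcal{HOM}(\mathcal{E}, \mathcal{F})$ by the uniqueness clause of Definition~\ref{DEFPUSHFWD}.

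The argument is essentially bookkeeping and no substantial obstacle is expected. The subtlest point is keeping the slot indices straight in the Cartesian half: one must apply the clean same-slot case of Lemma~\ref{LEMMACOMPOSCART} rather than the mixed-slot additional statement, and implicitly invoke symmetry to suppress any left/right distinction for internal Homs. A secondary point is the correct identification of $\boxtimes$ and $\mathbf{HOM}$ with (co)Cartesian lifts at the level of $\mathcal{D} \rightarrow \mathcal{S}$ (rather than $\mathcal{D} \rightarrow \{\cdot\}$); this is precisely the content of the transitivity proposition and must be cited at the right moment in each half of the argument.
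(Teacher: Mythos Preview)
Your proposal is correct and is precisely the unpacking the paper has in mind: the text right before the Corollary says only ``By the arguments in the proof of the transitivity of bifibrations of multicategories (cf.\@ Proposition~\ref{PROPBIFIBTRANSITIVITY}), we see that we actually have\dots'', and your two-step lift (coCartesian over $\widetilde{\Delta}$ then over $\Delta$, resp.\@ Cartesian over $\epsilon$ then over $\Delta'$) together with Lemma~\ref{LEMMACOMPOSCART} is exactly that argument made explicit. Your care with the same-slot hypothesis in the Cartesian half and the identification of $\boxtimes$, $\mathbf{HOM}$ via the transitivity statement is appropriate and matches how the paper uses Proposition~\ref{PROPBIFIBTRANSITIVITY}.
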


\begin{BEISPIEL}\label{EXINTEXTMONOIDALDIACOR}
Let us investigate the internal and external monoidal structure in the case $\mathcal{S} = \mathcal{S}^{\cor}$ (cf.\@ Definition~\ref{DEFSCOR}). 
Here the 1-morphisms $\Delta$ and $\Delta'$ are respectively given by the correspondences
\[ \vcenter{\xymatrix{
& S   \ar[ld]_{\mathrm{diag}} \ar@{=}[rd] \\
S \times S & ; & S
}}
\quad \text{and} \quad 
\vcenter{\xymatrix{
& S   \ar[rd]^{\mathrm{diag}} \ar@{=}[ld] \\
S & ; & S \times S.
}} 
\] 
From this we see that $\Delta_\bullet \cong \Delta^*$ and $(\Delta')^{\bullet} \cong \Delta^!$ hold. 

In the other direction, we can also reconstruct the external monoidal product and its adjoint from the internal one.
The functor $\boxtimes$ is the push-forward along the coCartesian 1-morphism
\[ \xymatrix{
& S \times S  \ar[ld]_{\pr_1}  \ar[d]^{\pr_2} \ar@{=}[rrd]  \\
S & S & ; & S \times S,
} \]
hence we have $(- \boxtimes -) \cong (\pr_1^* - \otimes \pr_2^* -)$.
The functor $\mathbf{HOM}$ is the pull-back w.r.t.\@ the first slot (say) along the Cartesian (w.r.t.\@ the first slot) 1-morphism:
\[ \xymatrix{
& S \times S  \ar@{=}[ld]  \ar[d]^{\pr_1} \ar[rrd]^{\pr_2}  \\
S \times S & S & ; & S,
} \]
hence we have $\mathbf{HOM}(-,-) \cong \mathcal{HOM}(\pr_1^* - , \pr_2^! - )$.
\end{BEISPIEL}

\begin{BEISPIEL}
Let us investigate the internal and external monoidal structure in the case $\mathcal{S} = \Dia^{\cor}$ (cf.\@ Definition~\ref{DEFDIACOR}).
By Proposition~\ref{PROPDIACORMONOIDAL} we know $I \otimes I \cong I \times I$ and $\mathcal{HOM}(I,I) \cong I^{\op} \times I$. 
The 1-morphism $\Delta$ is given by the correspondence
\[ \xymatrix{
& I   \ar[ld]_{\Delta} \ar@{=}[rd] \\
I \times I & ; & I.
} \]

To determine $\Delta'$, observe that the correspondence
\[ \xymatrix{
& I   \ar@{=}[ld]_{} \ar@{=}[d]_{} \ar@{=}[rrd] \\
I & I & ; & I
} \]
belongs (via \ref{KOREQSPAN}) to the following functor in $\Fun(I^{\op} \times I^{\op} \times I, \Dia)$:
\begin{eqnarray*}
F_I: I^{\op} \times I^{\op} \times I &\rightarrow& \Dia \\
i, i', i'' & \mapsto & \Hom(i, i'') \times \Hom(i', i'')
\end{eqnarray*}
which yields (via \ref{KOREQSPAN} again) the correspondence $\Delta'$:
\[ \xymatrix{
& \int \nabla F_I  \ar[ld]_{p} \ar[rd]^q \\
  I &  & I^{\op} \times I
} \]
and we have $\int \nabla F_I = I \times_{/I }\tw(I)$. 
 
We see that $\Delta_\bullet \cong \Delta^*$ and $(\Delta')^{\bullet} \cong p_* q^*$ hold. The latter is also the same as $\pr_{2,*} \pi_* \pi^*$ for the following functors:
\[ \xymatrix{ \tw(I) \ar[r]^\pi & I^{\op} \times I \ar[r]^-{\pr_2} & I. } \]
Given a bifibration of 1-multicategories $\mathcal{D} \rightarrow \mathcal{S}$, this explains more conceptually the construction of the ``multi-pull-back'' in the multicategory of functors $\Fun(I, \mathcal{D})$ in \cite[Proposition~4.1.6.]{Hor15}. Using Proposition~\ref{PROPBIFIBTRANSITIVITYCONVERSE} one can even reprove
the Proposition [loc.\@ cit.] in case that $\mathcal{S}$ is closed monoidal (i.e.\@ bifibered over $\{ \cdot \}$).
Applying Propositions~\ref{PROPBIFIBTRANSITIVITY} and \ref{PROPBIFIBTRANSITIVITYCONVERSE} to the composition 
\[ \Dia^{\cor}(\DD) \rightarrow \Dia^{\cor}  \rightarrow \{\cdot\} \] we can show that for a derivator $\DD$ it is the same
\begin{enumerate}
\item to define an absolute monoidal product and absolute Hom which commute with left, resp.\@ right Kan extensions in the correct way (conditions 1.--3. of \ref{PROPBIFIBTRANSITIVITYCONVERSE}) or
\item to give $\DD$ the structure of a closed monoidal derivator. 
\end{enumerate}
\end{BEISPIEL}

\section{Grothendieck and Wirthm\"uller}\label{GROTHWIRTH}

\begin{PAR}
Let $\mathcal{S}$ be a category with fiber products and final object and let $\mathcal{S}_0$ be a class of morphisms in $\mathcal{S}$.
We can define a subcategory $\mathcal{S}^{\cor,0}$ of $\mathcal{S}^{\cor}$ where the 2-morphisms are those
 \[ \xymatrix{
& & & A \ar[ld] \ar[llld] \ar[rd] \ar[dd]^\gamma\\
S_1 & \dots & S_n  &  &  T \\
&& &  A' \ar[lu] \ar[lllu] \ar[ru]
} \]
in which $\gamma \in \mathcal{S}_0$. 
If $\mathcal{S}_0$ is the class of {\em all} morphisms in $\mathcal{S}$, then we denote $\mathcal{S}^{\cor,0}$ by  $\mathcal{S}^{\cor,G}$.
\end{PAR}

\begin{LEMMA}
Consider the category $\mathcal{S}^{\cor,0}$ and a morphism $f: S \rightarrow T$ in $\mathcal{S}_0$ such that also $\Delta_f: S \rightarrow S \times_T S$ is in $\mathcal{S}_0$. Then the morphisms
\[ f^{\op}: \vcenter{\xymatrix{
  & S \ar[ld]_{f} \ar@{=}[rd] \\
 T & & S
}} \qquad f: \vcenter{ \xymatrix{
  & S \ar[rd]^{f} \ar@{=}[ld] \\
 S & & T
} } \]
are adjoints in the 2-category $\mathcal{S}^{\cor,0}$. 
\end{LEMMA}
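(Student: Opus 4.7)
My plan is to exhibit an explicit unit and counit and then verify the triangle identities by direct computation of apices in $\mathcal{S}^{\cor,0}$. I will show $f \dashv f^{\op}$ (which matches the heuristic $f_! \dashv f^!$ after applying a six-functor-formalism, cf.\@ \ref{CONCRETE6FU}).

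First I would compute the two relevant compositions. The composition $f^{\op} \circ f$ has as apex the pullback over the middle copy of $T$, which is $S \times_T S$ with legs $\pr_1, \pr_2: S \times_T S \to S$. The composition $f \circ f^{\op}$ has as apex the pullback over the middle copy of $S$, which (since both middle legs are identities) is just $S$, and both outer legs become $f$.

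Next I would produce the candidate unit and counit. The unit $\eta: \id_S \Rightarrow f^{\op} \circ f$ is defined by the morphism $\Delta_f: S \to S \times_T S$: commutativity of the two relevant triangles is exactly $\pr_1 \circ \Delta_f = \id_S = \pr_2 \circ \Delta_f$, and $\Delta_f \in \mathcal{S}_0$ by hypothesis, so $\eta$ is a legitimate 2-morphism in $\mathcal{S}^{\cor,0}$. The counit $\epsilon: f \circ f^{\op} \Rightarrow \id_T$ is defined by the morphism $f: S \to T$ (the apex of $f \circ f^{\op}$ is $S$, the apex of $\id_T$ is $T$); the required triangles collapse to the trivial identity $\id_T \circ f = f$, and $f \in \mathcal{S}_0$ by hypothesis.

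Finally I would check the two triangle identities. By associativity, the triple composite $f \circ f^{\op} \circ f$ has apex $S \times_T S$ with left leg $\pr_1$ and right leg $f \circ \pr_2$, while the triple composite $f^{\op} \circ f \circ f^{\op}$ has apex $S \times_T S$ with left leg $f \circ \pr_1$ and right leg $\pr_2$. A routine tracing of the horizontal composition formula on apices yields: the apex map of $f \ast \eta$ is $\Delta_f$, of $\epsilon \ast f$ is $\pr_1$, of $\eta \ast f^{\op}$ is $\Delta_f$, and of $f^{\op} \ast \epsilon$ is $\pr_2$. The triangle identities then reduce to the defining equations $\pr_1 \circ \Delta_f = \id_S$ and $\pr_2 \circ \Delta_f = \id_S$, which hold by definition of $\Delta_f$.

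The only genuinely non-routine step is the bookkeeping in the third paragraph, namely verifying that after horizontal whiskering the induced map on the pullback apex is really $\pr_1$ (resp.\@ $\pr_2$); once one carefully writes out the universal property of the fiber product in each whiskering, this is forced. Everything else is formal, so I anticipate no essential obstacle beyond organizing these apex-chases cleanly.
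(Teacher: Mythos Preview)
Your proof is correct and follows exactly the same approach as the paper: you produce the unit $\eta$ via $\Delta_f$ and the counit $\epsilon$ via $f$, and then verify the triangle identities. The paper gives the same unit and counit diagrams and simply writes ``One easily checks the unit/counit equations'', whereas you actually carry out that check by tracking the apex maps through the whiskerings; your computations that the relevant apex maps are $\Delta_f$, $\pr_1$, $\Delta_f$, $\pr_2$ are correct.
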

\begin{proof}
We give unit and counit:
\[ f \circ f^{\op} \Rightarrow \id: \vcenter{ \xymatrix{
  & T \ar@{=}[ld] \ar@{=}[rd] \\
 T & & T \\
  & S \ar[lu]^{f} \ar[ru]_{f} \ar[uu]^f 
} } \]

\begin{equation}\label{eqdelta}
 \id \Rightarrow f^{\op} \circ f: \vcenter{  \xymatrix{
  & S \ar@{=}[ld] \ar@{=}[rd]  \ar[dd]^{\Delta_f}  \\
 S & & S \\
  & S \times_T S \ar[lu]^{\pr_1} \ar[ru]_{\pr_2}
} } 
\end{equation}
One easily checks the unit/counit equations. 
\end{proof}

\begin{PROP}\label{PROPPROPERETALE}
\begin{enumerate}
\item
Let $\mathcal{D} \rightarrow \mathcal{S}^{\cor, 0}$ be a proper six-functor-formalism (cf.\@ \ref{DEF6FU}).
If $\Delta_f \in \mathcal{S}_0$ (in many examples this is always the case) then there is a canonical natural transformation
\[ f_! \rightarrow f_* \]
which is an isomorphism if $f \in \mathcal{S}_0$.
\item 
Let $\mathcal{D} \rightarrow \mathcal{S}^{\cor, 0}$ be an etale six-functor-formalism (cf.\@ \ref{DEF6FU}).
If $f \in \mathcal{S}_0$ then there is a canonical natural transformation
\[ f^* \rightarrow f^! \]
which is an isomorphism if $\Delta_f \in \mathcal{S}_0$.
\end{enumerate}
In particular, for a Wirthm\"uller context, we have a canonical isomorphism $f^* \cong f^!$ for all morphisms $f$ in $\mathcal{S}$, and for a
Grothendieck context, we have a canonical isomorphism $f_! \cong f_*$ for all morphisms $f$ in $\mathcal{S}$.  This justifies the naming.
\end{PROP}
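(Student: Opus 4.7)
My plan is to extract the required natural transformations (and isomorphisms) from the adjunction structure supplied by the previous lemma, transported to $\mathcal{D}$ via the 1-bifibered and 2-(op)fibered structure of $p$. The previous lemma gives: diagram (\ref{eqdelta}) is a valid 2-morphism $\eta_f: \id_S \Rightarrow f^{\op} \circ f$ in $\mathcal{S}^{\cor,0}$ whenever $\Delta_f \in \mathcal{S}_0$; the companion diagram with vertical arrow $f$ is a 2-morphism $\epsilon_f: f \circ f^{\op} \Rightarrow \id_T$ whenever $f \in \mathcal{S}_0$; and these form unit and counit of an adjunction $f \dashv f^{\op}$ in $\mathcal{S}^{\cor,0}$ when both conditions hold. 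Under the 1-bifibration $p$, push-forward along $f^{\op}$ is the classical $f^*$ and push-forward along $f$ is $f_!$, so one has the 1-bifibered adjunctions $f^* \dashv f_*$ and $f_! \dashv f^!$ in $\mathcal{D}$.

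For part 1 (proper), I will use that $p$ is 2-opfibered and hence (Proposition~\ref{PROPGROTHCONSTR}) corresponds to a pseudo-functor $\Xi: (\mathcal{S}^{\cor,0})^{2-\op} \to \mathcal{CAT}$ that reverses 2-morphisms. The 2-morphism $\eta_f$ then descends to a natural transformation
\[ \overline{\eta}_f : f^* f_! \Rightarrow \id_{\mathcal{D}_S}, \]
whose mate under $f^* \dashv f_*$, computed from $\Hom_{\mathcal{D}_T}(f_! E, f_* E) \cong \Hom_{\mathcal{D}_S}(f^* f_! E, E)$, gives the sought $f_! \Rightarrow f_*$. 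Assuming in addition $f \in \mathcal{S}_0$, the full adjunction $f \dashv f^{\op}$ is transported by $\Xi$ (with the roles of left and right swapped, $\Xi$ being contravariant on 2-morphisms) to an adjunction $f^* \dashv f_!$ in $\mathcal{D}$. Since $f^*$ already admits $f_*$ as right adjoint by 1-bifiberedness, uniqueness of adjoints produces a canonical isomorphism $f_! \cong f_*$, and a standard unit/counit triangle computation identifies it with the mate above.

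Part 2 (etale) proceeds dually: $p$ is 2-fibered, so the corresponding $\Xi: \mathcal{S}^{\cor,0} \to \mathcal{CAT}$ is covariant on 2-morphisms, and the 2-morphism $\epsilon_f$ (existing when $f \in \mathcal{S}_0$) descends to $\overline{\epsilon}_f : f_! f^* \Rightarrow \id_{\mathcal{D}_T}$. Its mate under $f_! \dashv f^!$, via $\Hom_{\mathcal{D}_S}(f^* X, f^! X) \cong \Hom_{\mathcal{D}_T}(f_! f^* X, X)$, is the desired $f^* \Rightarrow f^!$. Adding $\Delta_f \in \mathcal{S}_0$ lifts the full adjunction $f \dashv f^{\op}$ (direction now preserved) to $f_! \dashv f^*$ in $\mathcal{D}$; combined with $f_! \dashv f^!$, uniqueness of right adjoints of $f_!$ yields $f^* \cong f^!$, agreeing with the mate.

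The main obstacle is the bookkeeping of directions: verifying that a 2-opfibration reverses adjunctions (so $f \dashv f^{\op}$ becomes $f^* \dashv f_!$) while a 2-fibration preserves them ($f \dashv f^{\op}$ becomes $f_! \dashv f^*$), and correctly identifying the push-forwards $\Xi(f)$ and $\Xi(f^{\op})$ with the classical six-functor operations. Once these directions are pinned down, the mate construction and the uniqueness-of-adjoints argument are standard, and the agreement between the two constructions of the isomorphism in the full-adjunction case reduces to a routine triangle identity check.
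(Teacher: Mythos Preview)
Your proof is correct and follows essentially the same approach as the paper: construct $f^*f_! \Rightarrow \id$ (resp.\ $f_!f^* \Rightarrow \id$) from the unit (resp.\ counit) 2-morphism of the preceding lemma via the 2-opfibered (resp.\ 2-fibered) structure, take the mate to obtain $f_! \Rightarrow f_*$ (resp.\ $f^* \Rightarrow f^!$), and when both $f, \Delta_f \in \mathcal{S}_0$ invoke uniqueness of adjoints after transporting the full adjunction $f \dashv f^{\op}$ by the pseudo-functor $\Xi$. Your treatment of the direction bookkeeping (that $\Xi$ reverses adjunctions in the 2-opfibered case and preserves them in the 2-fibered case) is in fact more explicit than the paper's own proof, which simply asserts the construction and the uniqueness argument without spelling out the variance.
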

\begin{proof}
We prove the first assertion, the second is shown analogously. To give a natural transformation as claimed is equivalent to give a morphism
\[ f^* f_! \rightarrow \id, \]
or equivalently 
\[ \pr_{2,!} \pr_1^* \rightarrow \id \]
with $\pr_1$ and $\pr_2$ as in (\ref{eqdelta}).
This natural transformation is given by means of the 2-pullback along 
the 2-morphism of correspondences (\ref{eqdelta}). If $f$ is in $\mathcal{S}_0$ then this is the counit of an adjunction and hence it induces a canonical isomorphism $f_! \cong f_*$ (uniqueness of adjoints up to canonical isomorphism). 
\end{proof}

\begin{BEISPIEL}\label{EXPROPERETALE}
From the properties of 1/2 (op)fibrations of 2-multicategories one can derive many compatibilities of the morphism $f_! \rightarrow f_*$. For example in a proper six-functor-formalism $\mathcal{D} \rightarrow \mathcal{S}^{\cor}$ for a Cartesian square
\[ \xymatrix{
S \ar[r]^F \ar[d]_G & T \ar[d]^g \\
U \ar[r]_f & V
} \]
in $\mathcal{S}$ the following diagram is 2-commutative
\[ \xymatrix{
G^* F_!  \ar[r]^{\sim} \ar[d] &  f_! g^*\ar[d] \\
G^* F_*  \ar[r]^{\mathrm{exc.}} &  f_* g^*
} \]
provided that $\Delta_f, \Delta_F$ are in $\mathcal{S}_0$.
\end{BEISPIEL}

\begin{BEISPIEL}
The following diagram, which is depicted on the front cover of Lipman's book \cite{LH09} (there a specific Grothendieck context is considered, namely quasi-coherent sheaves on a certain class of proper schemes), is commutative:
\begin{equation}\label{LIPMAN} \vcenter{ \xymatrix{
f_* \Hom(-, f^! -)  \ar[r]^{\sim} \ar[d]^{} &  \Hom(f_!-,-)  \\
f_* \Hom(f^*f_! -, f^! -)  \ar[r]^\sim &  \Hom(f_!-, f_*f^! -) \ar[u]
} } \end{equation}
Here the horizontal morphisms are induced by the natural transformations
\begin{equation}\label{eqf1} 
f^*f_! \rightarrow \id,  
\end{equation}
and 
\begin{equation}\label{eqf2} 
  f_* f^!  \rightarrow \id, 
\end{equation}
respectively, which are the natural transformations on the push-forward (resp.\@ the pull-back) induced by the 2-morphisms of correspondences given by
\[ f \circ f^{\op} \Rightarrow \id: \vcenter{ \xymatrix{
  & T \ar@{=}[ld] \ar@{=}[rd] \\
 T & & T \\
  & S \ar[lu]^{f} \ar[ru]_{f} \ar[uu]^f 
} } \quad \text{and} \quad  \id \Rightarrow f^{\op} \circ f: \vcenter{  \xymatrix{
  & S \ar@{=}[ld] \ar@{=}[rd]  \ar[dd]^{\Delta_f}  \\
 S & & S \\
  & S \times_T S \ar[lu]^{\pr_1} \ar[ru]_{\pr_2}
} } \]
Note: The isomorphism $f_! \cong f_*$ of Proposition~\ref{PROPPROPERETALE},~1.\@ is constructed in such a way that the
two morphisms (\ref{eqf1}) and (\ref{eqf2}) are identified with the two counits
\[ f^*f_* \rightarrow \id 
\qquad \text{and} \qquad  f_! f^!  \rightarrow \id. \]
\end{BEISPIEL}

\begin{proof}
Taking adjoints this is the same as to show that the diagram
\[ \xymatrix{
f_!(- \otimes f^*-)   &  (f_!- ) \otimes - \ar[l]^-{\sim}  \ar[d] \\
f_! ((f^*f_!-) \otimes (f^*-))  \ar[u] &  f_! f^* ((f_! -) \otimes -)  \ar[l]^-\sim
} \]
is commutative. 
This is just the diagram induced on push-forwards by the following commutative diagram of 2-morphisms of multicorrespondences.
\begin{gather*}
\begin{array}{ccc}
 \left( \vcenter{ 
\xymatrix{ & &  S \ar@{=}[lld] \ar[ld]^f \ar[rd]^f \\
 S & T & ; & T  } } \right)
  & \iso &  \cdots \\  
\downarrow &&  \\
\left( \vcenter{ 
\xymatrix{ & &  S \ar[lld]^f \ar[ld]^f \ar[rd]^f \\
 T & T & ; & T  } } \right) \circ_1
\left( \vcenter{ 
\xymatrix{ &  S  \ar@{=}[ld] \ar[rd]^f \\
 S & ; & T  } } \right)
  & \iso &  \cdots
\end{array}\\
\phantom{x} \\
\hline
\phantom{x} \\
\begin{array}{ccc}
\cdots & \iso & \left( \vcenter{ 
\xymatrix{ & &  T \ar@{=}[lld] \ar@{=}[ld] \ar@{=}[rd] \\
 T & T & ; & T  } } \right) \circ_1
\left( \vcenter{ 
\xymatrix{ &  S  \ar@{=}[ld] \ar[rd]^f \\
 S & ; & T  } } \right) \\
& & \uparrow \\
\cdots & \iso & \left( \vcenter{ 
\xymatrix{ &  S  \ar[ld]^f \ar[rd]^f \\
 T & ; & T  } } \right) \circ
\left( \vcenter{ 
\xymatrix{ & &  T \ar@{=}[lld] \ar@{=}[ld] \ar@{=}[rd] \\
 T & T & ; & T  } } \right) \circ_1
\left( \vcenter{ 
\xymatrix{ &  S  \ar@{=}[ld] \ar[rd]^f \\
 S & ; & T  } } \right)
\end{array}
 \end{gather*}
\end{proof}

Hence, for a Grothendieck context given by Definiton~\ref{DEFCANGROTHENDIECK} (as is the context considered in \cite{LH09}) the commutativity of the diagram (\ref{LIPMAN}) follows from Proposition~\ref{PROPCANGROTHENDIECK}.

\begin{PAR}\label{PARDERCANWIRTH}
Analogously we can say that a 1-bifibration and 2-opfibration over $\Dia^{\cor}$ is a Wirthm\"uller context. Note that in $\Dia^{\cor}$ all functors
supply valid 2-morphisms. This shows that to construct e.g.\@ a monoidal derivator one does not have to start with a pre-multiderivator but
could use an arbitrary 1-bifibration and and 2-opfibration over $\Dia^{\cor}$. In detail:

Let $\alpha: I \rightarrow J$ be a functor between diagrams in $\Dia$. 
Recall from Lemma~\ref{LEMMAPROPDIACORS}, 1.\@ that in the category $\Dia^{\cor}$ the correspondences 
\[ [\alpha]': \vcenter{\xymatrix{
  & J \times_{/J} I \ar[ld]_{\alpha} \ar[rd] \\
 J & & I
}} \qquad [\alpha]: \vcenter{ \xymatrix{
  & I \times_{/J} J \ar[rd]^{\alpha} \ar[ld] \\
 I & & J
} } \]
are adjoints. 
Using this, we can reconstruct from a strict 2-functor $\mathcal{D} \rightarrow \Dia^{\cor}$ which is 1-opfibered and 2-fibered with 1-categorical fibers a (non-strict) pre-multiderivator as follows: Consider the embedding $\iota$ from Proposition~\ref{PROPPSEUDOFUNCTDIA} and consider the pull-back (cf.\@ Definition~\ref{DEFPULLBACK}) of $\mathcal{D}$
\[ \xymatrix{
\iota^* \mathcal{D} \ar[r] \ar[d] & \mathcal{D} \ar[d] \\
\Dia^{1-\op} \ar[r]^\iota & \Dia^{\cor}.
} \]
The 1-opfibration, and 2-fibration of 2-multicategories $\iota^* \mathcal{D} \rightarrow \Dia^{1-\op}$ with 1-categorical fibers can be seen (cf.\@ Proposition~\ref{PROPGROTHCONSTR}) as a pseudo-functor
\[ \DD: \Dia^{1-\op} \rightarrow \mathcal{MCAT}. \]
The adjointness of $[\alpha]$ and $[\alpha]'$ shows that $\mathcal{D}$ {\em is determined by} $\iota^* \mathcal{D}$ and can thus be reconstructed by the construction in \ref{PARDEFDIACORD} that associates the 2-multicategory $\Dia^{\cor}(\DD)$ with
a pre-multiderivator $\DD$. The only difference is that the $\DD$ reconstructed from $\mathcal{D}$ might not be a strict 2-functor. 
\end{PAR}
 
\newpage
\bibliographystyle{abbrvnat}
\bibliography{6fu}

\end{document}